\tikzstyle directed=[postaction={decorate,decoration={markings,
		mark=at position .65 with {\arrow{latex}}}}]
\numberwithin{equation}{section}
\newtheorem{proposition}[equation]{Proposition}
\newtheorem{theorem}[equation]{Theorem}
\newtheorem{corollary}[equation]{Corollary}
\newtheorem{lemma}[equation]{Lemma}
\newtheorem{claim}[equation]{Claim}
\newtheorem{intro}{Theorem}
\newtheorem{introth}[intro]{Theorem}
\theoremstyle{definition}
\newtheorem{definition}[equation]{Definition}
\newtheorem{remark}[equation]{Remark}
\newtheorem{example}[equation]{Example}
\newenvironment{cproof}{\begin{proof}[Proof of the
        claim]}{\end{proof}}
\newcommand\Sub{\mathrm{Sub}}
\newcommand\Sym{\mathrm{Sym}}
\newcommand\Stab{\mathrm{Stab}}
\newcommand\Ph{\mathrm{Ph}}
\newcommand\dom{\mathrm{dom}}
\newcommand\rng{\mathrm{rng}}
\newcommand\trg{\mathbf{t}}
\newcommand\src{\mathbf{s}}
\newcommand\var{\mathbf{u}}
\newcommand\BS{\mathrm{BS}}
\newcommand\Sch{\mathrm{Sch}}
\newcommand\image{\mathrm{Im}}
\newcommand\id{\mathrm{id}}
\newcommand\M{\mathrm{M}}
\newcommand\C{\mathrm{C}}
\newcommand\B{\mathrm{B}}
\newcommand\D{\mathrm{D}}
\newcommand\Spec{\mathrm{Spec}}
\newcommand\rk{\mathrm{rk}}
\newcommand\diag{\mathrm{diag}}
\newcommand\rA{\mathrm{A}}
\newcommand\rP{\mathrm{P}}
\newcommand\GL{\mathrm{GL}}
\newcommand\SL{\mathrm{SL}}
\newcommand\Z{\mathbb{Z}}
\newcommand\F{\mathbb{F}}
\title{Dynamics on the perfect kernel of non-amenable higher rank generalized Baumslag-Solitar groups}
\author{Sasha Bontemps}
\date{\today}
\newenvironment{acknowledgements}{%
	
	\begin{abstract}
	}{%
	\end{abstract}
}
\begin{document}
	
	\maketitle
	
	\begin{abstract}
		In this article, we study the space of subgroups of non-amenable generalized Baumslag-Solitar groups (GBS groups) of rank $d$, that is, groups acting cocompactly on an oriented tree with vertex and edge stabilizers isomorphic to $\mathbb{Z}^d$. Our results generalize the study of Baumslag-Solitar groups, and of GBS groups of rank $1$. We give an explicit description of the perfect kernel of a non-amenable GBS group $G$ of rank $d$ and show the existence of a partition of the perfect kernel into a countably infinite set of pieces which are invariant under the action by conjugation of $G$, and such that each piece contains a dense orbit.
	\end{abstract}
	
	{
		\small	
		\noindent\textbf{{Keywords:}} higher rank generalized Baumslag-Solitar groups; space of subgroups; Schreier graphs; perfect kernel; topologically transitive actions; Bass-Serre theory.
	}
	
	\smallskip
	
	{
		\small	
		\noindent\textbf{{MSC-classification:}}	
		37B; 20E06; 20E08.
	}

	\section{Introduction}
	
	A generalized Baumslag-Solitar group (GBS group) of rank $d$ is a group that acts cocompactly on an oriented tree such that the vertex and edge stabilizers are isomorphic to $\mathbb{Z}^d$. As a consequence of Bass-Serre theory, a generalized Baumslag-Solitar group is defined by a finite iteration of HNN extensions and amalgamated free products of $\mathbb{Z}^d$ over $\mathbb{Z}^d$. 
	
	GBS groups of rank $d > 1$ are a generalization of GBS groups of rank $1$, which arise as a natural generalization of Baumslag-Solitar groups $\BS(m,n) = \langle b, t \mid tb^mt^{-1} =~b^n \rangle$. Baumslag-Solitar groups were introduced in \cite{tworelators} to give the first examples of two generated finitely presented non-Hopfian groups. GBS groups have been widely studied in relation to various properties. In \cite{levitt}, Levitt computed the minimal number of generators of a GBS group of rank $1$. He studied their automorphism groups in \cite{levittisom}. The classification up to quasi-isometry of GBS groups of rank 1 is known (see \cite{whyte}) and the classification up to measure equivalence of Baumslag-Solitar groups has been announced by the authors of \cite{measureeq}. In \cite{modular}, the authors determined which GBS groups (of arbitrary rank) are residually finite and which are LERF.
	
	To any GBS group $G$ of rank $d$ is associated its \textbf{modular homomorphism} $\Delta_G : G \to \GL_d(\mathbb{Q})$, which was introduced in \cite{button}. Starting from a vertex $v$ of the Bass-Serre tree, the conjugation by any element $g$ of $G$ induces a homomorphism $\Stab(v) \to \Stab(g \cdot v)$. As all vertex stabilizers are commensurable, this homomorphism can be seen as an element of $\GL_d(\mathbb{Q})$. This defines a homomorphism $\Delta_G^{(v)} : G \to \GL_d(\mathbb{Q})$. Two such homomorphisms $\Delta_G^{(v)}$, $\Delta_G^{(w)}$ are conjugate. Up to conjugation, this defines the modular homomorphism $\Delta_G : G \to \GL_d(\mathbb{Q})$. A GBS group $G$ is called \textbf{unimodular} if $\image\left(\det \circ \Delta_G\right) \subseteq \{1,-1\}$. As an example, one characterization of unimodularity for a GBS group of rank $1$ is the existence of an infinite cyclic normal subgroup (see \cite{levittisom}[Section 2] for instance).
	
	In this article, we will focus on the set of subgroups $\Sub(G)$ of a GBS group $G$ from a topological point of view. This means that we are more interested in the topological structure of $\Sub(G)$, seen as a closed subset of the Cantor set $\{0,1\}^{G}$, than in the algebraic properties of the subgroups of $G$. Cantor-Bendixson theory (see \cite{Kechris}) leads to a unique decomposition $\Sub(G) = \mathcal{K}(G) \sqcup C$ into a closed subspace without isolated points called the \textbf{perfect kernel} of $G$ and a countable set $C$. As the action by conjugation of $G$ induces a homeomorphism of $\Sub(G)$, the perfect kernel is $G$-invariant. We are interested in the computation of the perfect kernel and the dynamics induced by the action by conjugation of $G$ on it. 
	
	The perfect kernel of any finitely generated abelian group is empty. In \cite{abelian}, the authors classified the set of subgroups of all countable abelian groups up to homeomorphism. In \cite{lamplighter}, the authors proved that the perfect kernel of the lamplighter group $(\mathbb{Z}/p\mathbb{Z})^n \wr \mathbb{Z}$ (where $p$ is a prime number) is the set of subgroups of $\bigoplus_{\mathbb{Z}}\left(\mathbb{Z}/p\mathbb{Z}^n\right)$.
	
	If $G$ is a finitely generated group, then finite index subgroups are isolated. Thus the perfect kernel of $G$ is included in the set of infinite index subgroups of $G$. The authors of \cite{totipotent} observed that equality holds for the non-abelian finitely generated free group $\mathbb{F}_r$ on $r$ generators, and that the action of $\mathbb{F}_r$ on its perfect kernel is topologically transitive. Recall that an action of a group $G$ on a topological space $X$ is \textbf{topologically transitive} if for every non-empty open subsets $U, V \subseteq X$, there exists $g \in G$ such that $gU \cap V \neq \emptyset$. In the case where $X$ is Polish, this is equivalent to the existence of a dense orbit. The authors of \cite{AG} extended this result to a large class of groups acting on trees. They proved that the perfect kernel of a finitely generated group $G$ with infinitely many ends is also equal to the set of infinite index subgroups, and that the action by conjugation is topologically transitive on the perfect kernel as soon as $G$ does not contain any non-trivial finite normal subgroup. More generally, they proved that for any finitely generated group $G$ that acts (minimally and irreducibly) on a tree $\mathcal{T}$ such that the action of $G$ on $\mathcal{T}$ is acylindrical, then any subgroup $H$ of $G$ satisfying that the quotient graph $H \backslash \mathcal{T}$ is infinite belongs to the perfect kernel of $G$. In this case, they also proved that the action by conjugation of $G$ on the closure of the set of subgroups $H$ acting on $\mathcal{T}$ with infinitely many orbits of edges is topologically transitive. Recall that an action of a group $G$ on a tree $\mathcal{T}$ is \textbf{acylindrical} if there exists $R > 0$ such that the stabilizer of any path of length larger than $R$ is trivial.
	
	GBS groups of rank $d$ are typical examples of groups whose action on their Bass-Serre tree is \textit{not} acylindrical, because the stabilizer of any finite subtree of the Bass-Serre tree is isomorphic to $\mathbb{Z}^d$. The authors of \cite{solitar1} (who studied Baumslag-Solitar groups) and of \cite{bontemps} (who extended some results obtained by the aforementioned authors to GBS groups of rank $1$) observed that this leads to very different dynamics for the action by conjugation of a non-amenable GBS group $G$ of rank $1$ on its perfect kernel. More precisely, they showed that $\mathcal{K}(G) = \Sub_{[\infty]}(G)$ if and only if $G$ is not unimodular. They also described a countably infinite $G$-invariant partition of $\mathcal{K}(G)$, such that $G$ acts topologically transitively on each piece. One piece is closed and all the other ones are open (and also closed if and only if $G$ is unimodular). To obtain this decomposition, the authors of \cite{solitar1} introduced the \textbf{phenotype}, which is a $G$-invariant function $\Sub(G) \to \mathbb{N}^* \cup \{\infty\}$, and which was generalized in \cite{bontemps}. This function is explicit and encodes the decomposition of the perfect kernel. 
	
	In this article, we will extend these results to non-amenable GBS groups of an arbitrary rank $d$, that is to say, those which are neither isomorphic to $\mathbb{Z}[\rA, \rA^{-1}](\mathbb{Z}^d)~\rtimes~\mathbb{Z}$ for some $\rA \in \M_d(\mathbb{Z}) \cap \GL_d(\mathbb{Q})$ (where $\mathbb{Z}[\rA, \rA^{-1}](\mathbb{Z}^d)$ is the subgroup of $\mathbb{Q}^d$ defined by $\{\rA^ku, k \in \mathbb{Z}, u \in \mathbb{Z}^d\}$ and $\mathbb{Z}$ acts on $\mathbb{Z}[\rA, \rA^{-1}](\mathbb{Z}^d)$ by multiplication by $\rA$) nor to any amalgamated free product $\mathbb{Z}^d*_{\mathbb{Z}^d}\mathbb{Z}^d$ where both injections are defined by matrices of determinant $\pm 2$ (\textit{cf.} Proposition \ref{amenable}). Understanding the space of subgroups of amenable GBS groups is a different problem: it requires an understanding of the space of subgroups of the abelian group $\mathbb{Z}\left[\rA, \rA^{-1}\right](\mathbb{Z}^d)$. We think that this should be tackled using the results of Cornulier, Guyot and Pitsch about the space of subgroups of an abelian group (\textit{cf.} \cite{abelian}), but this lies beyond the scope of our methods, which crucially makes use of the action on the Bass-Serre tree. 
	
	More precisely, we prove the following result (\textit{cf.} Theorem \ref{kerneld}):
	
	\begin{introth}\label{computation}
		Let $G$ be a non-amenable GBS group of rank $d$ defined by a reduced graph of groups $\mathscr{H}$ and let $\mathcal{T}$ be the Bass-Serre tree of $\mathscr{H}$. Then \[\mathcal{K}(G) = \{H \leq G \mid H \backslash \mathcal{T} \text{ \ is infinite} \}.\]
	\end{introth}
	
	We also give some sufficient conditions that depend on the modular homomorphism (see Section \ref{gbsd}) for the perfect kernel to be equal to $\Sub_{[\infty]}(G)$. Namely, we prove the following (\textit{cf.} Corollary~\ref{cor}):
	
	\begin{introth}\label{corint}
		Let $G$ be a non-amenable and non-unimodular GBS group of rank $d$ such that the finite index subgroups of $\image\left(\Delta_{G}^{(v)}\right)$ are $\mathbb{Q}$-irreducible. Then \[\mathcal{K}(G) = \Sub_{[\infty]}(G).\]
	\end{introth}
	
	Recall that a subgroup $\Gamma$ of $\GL_d(\mathbb{K})$ is called \textbf{$\mathbb{K}$-irreducible} if it does not stabilize any non-trivial proper vector subspace of $\mathbb{K}^d$.
	
	We also obtain a generalization of the main results of \cite{solitar1} and \cite{bontemps} (\textit{cf.} Equation \ref{dynpartperfker} and Theorem \ref{dynamic} if $G$ is not a semidirect product, and Theorem \ref{dynsemidir} otherwise):
	
	\begin{introth}\label{decomposition}
		Let $G$ be a non-amenable GBS group of rank $d$. There exists a countably infinite partition of the perfect kernel of $G$ into $G$-invariant pieces that contain dense orbits. 
	\end{introth}
	
	This implies in particular that the action is topologically transitive on each piece. 
	
	We also investigate the topology of the pieces that appear in this decomposition, which is slightly different depending on whether $G$ is a semidirect product $\mathbb{Z}^d \rtimes \mathbb{F}_r$ (\textit{cf.} Theorem \ref{dynsemidir} -- keeping the notations of this theorem, the pieces will be the sets $\mathcal{P}_{\mathscr{C}} \smallsetminus \mathcal{D}_{\mathscr{C}}$ and the orbits in the countable sets $\mathcal{D}_{\mathscr{C}}$ under the $G$-conjugation, for $\mathscr{C}$ varying in $Conj(G_v)$) or not (\textit{cf.} Proposition \ref{topopiece} -- keeping the notations of this proposition, the pieces of the decomposition of the perfect kernel will be given by the sets $\mathcal{K}(G) \cap \Ph_{\mathscr{H},v}^{-1}(\pi_v(\Lambda))$, for $\Lambda$ varying in $\Sub(\mathbb{Z}^d)$). However, as all these pieces of this partition are $F_{\sigma}$ subsets, they turn out to be $G_{\delta}$ subsets (hence Polish). 
	
	The main challenge raised by the generalization of the results of \cite{bontemps} to GBS groups of arbitrary rank is that, while injections of the edge groups into the vertex groups were simply encoded by non-zero integers in the rank $1$ case, they are now encoded by matrices. Understanding the space of subgroups of a higher rank GBS group thus requires a more subtle analysis of the action of the group \textit{via} the modular homomorphism on the set of vector subspaces of $\mathbb{Q}^d$, and a more careful analysis of partial actions. In particular, while the decomposition of Theorem~\ref{decomposition} was explicit in the case of GBS groups of rank $1$ (and encoded by the ``phenotype'', see \cite[Definition 5.2 and 5.5, Theorem 6.6]{bontemps}), it is now encoded by an abstract equivalence relation on the set of subgroups of $\Z^d$. Moreover, we do not know whether the topological transitivity can be strengthened into high topological transitivity (while the action on each piece was proved to be highly topologically transitive in \cite[Theorem 2]{bontemps}). Also notice that the need to provide a separate analysis for semidirect products $\Z^d \rtimes \F_r$ is a conceptual difference with the case of GBS groups of rank $1$ (see Section~\ref{semidirect}).
	
	The paper is organized as follows. Given a graph of groups $\mathscr{H}$ of fundamental group $G$, we extend the notion of $\mathscr{H}$-preactions and of $\mathscr{H}$-graphs in Section \ref{generalsetting}. They were introduced in \cite{hightrans} in the case where $G$ is an amalgamated free product or an HNN-extension (\textit{i.e.} if $\mathscr{H}$ consists of a single edge), and adapted in \cite{solitar1} in the case of Baumslag-Solitar groups and in \cite{bontemps} in the case of GBS groups of rank 1. These graphs can be thought of as a generalization of Schreier graphs in the case of free groups: any subgroup $H$ of a free group $\mathbb{F}_r$ can be encoded by a covering of the bouquet of $r$ circles (which is the Schreier graph of the action $(H \backslash \mathbb{F}_r, H) \curvearrowleft \mathbb{F}_r$, with respect to the standard generating set). When $H$ has infinite index, this covering is infinite, and perturbing this graph very far from the origin amounts to building a subgroup of $\mathbb{F}_r$ that is close to $H$. In Section \ref{perfectkernel}, we prove Theorem \ref{computation} and Theorem~\ref{corint}. Finally, in Section \ref{dynamicalpartition}, we show the existence of the decomposition of Theorem \ref{decomposition} and investigate the topology of the pieces. This gives rise to a natural generalization of the phenotype defined in \cite{solitar1} and in \cite{bontemps}. However, we do not know if this decomposition is still explicit in this wider context, and we do not know if our arguments can be used to prove \textit{high} topological transitivity results as in \cite{solitar2} for Baumslag-Solitar groups and in \cite{bontemps} for GBS groups of rank $1$. 

		\begin{acknowledgements}
		I am very grateful to my PhD advisor Damien Gaboriau for his support and for insightful conversations on this subject. I also warmly thank Mart\'in Gilabert Vio for one particularly fruitful discussion. I thank Rémi Coulon and Ashot Minasyan for their careful reading and valuable suggestions. Finally, I warmly thank the anonymous referee for their cautious reading that significantly improved the quality of this text. This work was supported by a CDSN from ENS de Lyon.
	\end{acknowledgements}
	
	\tableofcontents
	
	\section{Preliminaries and notations}
	
	We denote by $\mathcal{P}$ the set of prime numbers in $\mathbb{N}$. For every integer $N$ and $p \in \mathcal{P}$, we denote by $|N|_p$ the $p$-adic valuation of $N$, that is, the largest $n \in \mathbb{N}$ such that $p^n$ divides $N$. By ``countable'' we mean finite or in bijection with $\mathbb{N}$. Given a group $G$, we denote by $\Sub(G)$ the set of subgroups of $G$ and by $\Sub_{[\infty]}(G)$ the subset of $\Sub(G)$ that consists of infinite index subgroups. If $H \leq G$ is a subgroup, we denote by $[H]_{G}$ the $G$-conjugacy class of $H$. For any $d \in \mathbb{N}^*$, we denote by $\mathcal{L}(\mathbb{Z}^d)$ the set of lattices of $\mathbb{Z}^d$, \textit{i.e.} the set of finite index subgroups of $\mathbb{Z}^d$. If $\mathbb{K}$ is a field and $\M \in \M_d(\mathbb{K})$, one denotes by $\Spec_{\mathbb{K}}(\M)$ the spectrum of $\M$ in $\mathbb{K}$, \textit{i.e.} the set of elements $\lambda \in \mathbb{K}$ such that $\det(\M-\lambda I_d) = 0$. If $\Gamma$ is a subgroup of $\mathrm{GL}_d(\mathbb{K})$, we say that $\Gamma$ is $\mathbb{K}$-irreducible if $\Gamma$ does not stabilize any non-trivial vector subspace of $\mathbb{K}^d$. Likewise, an element $\gamma \in \mathrm{GL}_d(\mathbb{K})$ is $\mathbb{K}$-irreducible if it stabilizes no non-trivial vector subspace of $\mathbb{K}^d$.
	
	\subsection{Graphs}
	
	We refer to \cite{bontemps}, Section 2 for the notations and definitions around graphs and Schreier graphs. We add the following terminology: given a graph $\mathscr{H}$, an element of $\mathcal{E}(\mathscr{H}) \times \{\src, \trg\}$ is called an \textbf{half-edge}. The \textbf{inferior} half-edge of $e \in \mathcal{E}(\mathscr{H})$ is $(e, \src)$ and its \textbf{superior} half-edge is $(e, \trg)$. Given a graph $\mathscr{H}$ and a spanning tree $\mathscr{T}$ of $\mathscr{H}$, for any vertices $u, v \in \mathcal{V}(\mathscr{H})$, we denote by $[u; v]_{\mathscr{T}}$ the unique edge path in $\mathscr{T}$ that connects $u$ to $v$.
	
	\subsection{Space of subgroups of a countable group}
	
	Let $G$ be an infinite countable group. Endowed with the $\textbf{Chabauty topology}$, the set of subgroups $\Sub(G)$ of $G$ is a closed subspace of the Cantor set $\{0,1\}^{G}$. An explicit basis of open sets is given by the following clopen sets: \[\mathcal{V}(O,I) = \{H \in \Sub(G), H \cap O = \emptyset \text{ \ and \ } I \subseteq H  \}\]
	for any finite subsets $O,I \subseteq G$. 
	
We will make use of the following lemma, that describes the topology of some subsets of $\Sub(G)$.

	\begin{lemma}\label{topgen}
		Let $G$ be a countable group and let $G_0$ be any subgroup of $G$. Then, for any subgroup $H_0 \leq G_0$, the set \[\left\{H \leq G \mid H \cap G_0 = H_0\right\}\] is closed in $\Sub(G)$. If moreover the group $H_0$ is finitely generated and has finite index in $G_0$, then it is also open.
	\end{lemma}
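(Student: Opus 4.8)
The plan is to prove the two assertions separately, using the explicit basis of clopen sets $\mathcal{V}(O,I)$ described above.

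\medskip

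\noindent\textbf{Closedness.} First I would observe that the condition ``$H \cap G_0 = H_0$'' splits into two parts: the inclusion $H_0 \subseteq H$ (equivalently $H_0 \subseteq H \cap G_0$), and the reverse inclusion $H \cap G_0 \subseteq H_0$. The first part is a closed (indeed clopen) condition: writing $H_0 = \langle S \rangle$ for some countable generating set $S$, we have $H_0 \subseteq H$ if and only if $s \in H$ for every $s \in S$, and each condition $\{H : s \in H\} = \mathcal{V}(\emptyset, \{s\})$ is clopen, so the intersection over $s \in S$ is closed. The second part is also closed: $H \cap G_0 \subseteq H_0$ fails precisely when there exists $g \in G_0 \setminus H_0$ with $g \in H$; the set of such $H$ is $\bigcup_{g \in G_0 \setminus H_0} \mathcal{V}(\emptyset,\{g\})$, which is open, so its complement $\{H : H \cap G_0 \subseteq H_0\}$ is closed. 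Intersecting the two closed sets gives that $\{H \leq G \mid H \cap G_0 = H_0\}$ is closed.

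\medskip

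\noindent\textbf{Openness under the extra hypotheses.} Now suppose $H_0$ is finitely generated and of finite index in $G_0$. Since $H_0$ is finitely generated, say $H_0 = \langle s_1, \dots, s_k \rangle$, the condition $H_0 \subseteq H$ becomes $\{s_1,\dots,s_k\} \subseteq H$, i.e. $H \in \mathcal{V}(\emptyset, \{s_1,\dots,s_k\})$, which is clopen. Since $H_0$ has finite index in $G_0$, we may pick finitely many coset representatives $g_1, \dots, g_m$ for the nontrivial cosets of $H_0$ in $G_0$, so that $G_0 \setminus H_0 = \bigcup_{i=1}^m g_i H_0$; then the condition $H \cap G_0 \subseteq H_0$ is equivalent to requiring that no $g_i H_0$ meets $H$. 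The key point is that, given that we already assume $H_0 \subseteq H$, the set $g_i H_0$ meets $H$ if and only if $g_i \in H$ (because $H_0 \subseteq H$ forces $g_i H_0 \subseteq H$ as soon as $g_i \in H$, and conversely if $g_i h \in H$ for some $h \in H_0 \subseteq H$ then $g_i \in H$). Hence, intersected with the clopen set $\mathcal{V}(\emptyset,\{s_1,\dots,s_k\})$, the condition $H \cap G_0 = H_0$ is equivalent to $H \in \mathcal{V}(\{g_1,\dots,g_m\}, \{s_1,\dots,s_k\})$, which is clopen. Therefore the set in question is the intersection of a clopen set with a closed set that happens to coincide with a clopen set, hence open (and closed).

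\medskip

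\noindent I do not expect any serious obstacle here; the only point that requires a little care is the reduction, in the openness argument, from ``$g_i H_0$ avoids $H$'' to the single membership condition ``$g_i \notin H$'', which uses both that $H_0 \subseteq H$ (already imposed) and that there are only finitely many cosets to exclude (so that finitely many forbidden elements suffice). Without the finite-index hypothesis one would need to exclude infinitely many elements, which is in general only a closed condition, not an open one; and without finite generation of $H_0$ the inclusion $H_0 \subseteq H$ would require infinitely many membership conditions, again only closed. So both hypotheses are used exactly where expected.
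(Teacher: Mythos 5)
Your proof is correct and follows essentially the same route as the paper: both arguments express the set via the membership conditions ``$h \in H$'' for (a generating set of) $H_0$ and ``$g \notin H$'' for elements of $G_0 \setminus H_0$, and in the finitely generated, finite index case both reduce to the single basic clopen set determined by the finitely many generators and the finitely many nontrivial coset representatives, using exactly the observation that $H_0 \subseteq H$ lets one exclude only the representatives $g_j$ rather than all of $G_0 \setminus H_0$.
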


	\begin{proof}
		One has \[\left\{H \leq G \mid H \cap G_0 = H_0\right\} = \bigcap_{(h,g) \in H_0 \times \left(G_0 \setminus H_0\right)}\left\{H \leq G \mid h \in H \text{ \ and \ } g \notin H\right\}\]
		which is closed as an intersection of basic clopen subsets of $\Sub(G)$.
		
		If $H_0$ is finitely generated and has finite index in $G_0$, let $\{h_1, ..., h_n\}$ be a finite generating set of $H_0$ and let us write $G_0/H_0 = \{H_0, g_1H_0, ..., g_mH_0\}$. We then have \[\left\{H \leq G \mid H \cap G_0 = H_0\right\} = \bigcap_{(i,j) \in \llbracket 1, n \rrbracket \times \llbracket 1, m \rrbracket}\left\{H \leq G \mid h_i \in H \text{ \ and \ } g_j \notin H\right\}\]
		which is open as a finite intersection of basic clopen sets.
	\end{proof}
	
	Applying Cantor-Bendixson Theorem (see \cite[Section 6, Chapter 1]{Kechris} for instance) to the Polish space $\Sub(G)$ leads to a unique decomposition $\Sub(G) = K \bigsqcup C$ where $C$ is countable and $K$ is a closed subspace of $\Sub(G)$ without isolated points. The set $K$ is called the \textbf{perfect kernel} of $G$ and denoted by $\mathcal{K}(G)$. It is the largest closed subset of $\Sub(G)$ without isolated points, or equivalently, the set of subgroups all of whose neighborhoods are uncountable.
	
	If $G$ is finitely generated, then finite index subgroups are isolated. In particular, we get the following inclusion $\mathcal{K}(G) \subseteq \Sub_{[\infty]}(G)$. The converse inclusion is true in the case of finitely generated free groups (see \cite{totipotent}[Proposition 2.1] and \cite{AG}[Corollary 5.17]): 
	\begin{proposition}\label{freegroup}
		Let $\mathbb{F}_r$ be the free group on $r$ generators ($2 \leq r \leq \infty$). Then \begin{itemize}
			\item if $r < \infty$, then $\mathcal{K}(\mathbb{F}_r) = \Sub_{[\infty]}(\mathbb{F}_r)$;
			\item $\mathcal{K}(\mathbb{F}_{\infty}) = \Sub(\mathbb{F}_{\infty})$.
		\end{itemize}
		Moreover, there exists a dense orbit for the action by conjugation of $\mathbb{F}_r$ on $\mathcal{K}(\mathbb{F}_r)$.
	\end{proposition}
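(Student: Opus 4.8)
The plan is to run everything through the standard correspondence between subgroups of $\mathbb{F}_r$ and pointed labeled Schreier graphs. Write $\Sch_H$ for the Schreier coset graph $\Sch(H\backslash\mathbb{F}_r)$, with its edges labeled by the $r$ free generators and with basepoint the trivial coset. Then $H\mapsto\Sch_H$ is a bijection from $\Sub(\mathbb{F}_r)$ onto the set of connected \emph{complete} labeled pointed graphs (each vertex carrying exactly one incoming and one outgoing edge of each label), taken up to label- and basepoint-preserving isomorphism; the action by conjugation corresponds to changing the basepoint; and $H$ has finite index iff $\Sch_H$ is finite. Moreover, a basic clopen neighborhood $\mathcal{V}(O,I)$ only constrains which words of bounded length in finitely many of the generators lie in $H$, hence is entirely determined by a finite \emph{pattern} of $\Sch_H$, namely a ball of finite radius, in the subgraph spanned by finitely many labels, around the basepoint. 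I will use freely the folklore completion lemma: a finite partial labeled graph carrying at least one dangling half-edge (which will always be the case below) embeds, in continuum-many pairwise non-isomorphic pointed ways, into a connected complete infinite labeled graph; one completes the finitely many partial label-permutations and then fills the remaining dangling half-edges greedily, recording far from the basepoint an arbitrary $\varepsilon\in\{0,1\}^{\mathbb{N}}$ into the combinatorics (e.g.\ via presence or absence of prescribed self-loops).

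For $r<\infty$, the inclusion $\mathcal{K}(\mathbb{F}_r)\subseteq\Sub_{[\infty]}(\mathbb{F}_r)$ is the usual fact that finite-index subgroups are isolated: if $H$ has finite index then $\Sch_H$ is finite and complete, of some diameter $D$, and any subgroup agreeing with $H$ on the ball of radius $D$ around the basepoint must, by connectedness and completeness, have the very same Schreier graph. For the reverse, I would establish the single statement: \emph{if $\Sch_H$ is infinite (automatic when $r=\infty$), then every basic neighborhood of $H$ is uncountable}. Given such a neighborhood, i.e.\ a finite pattern $P$ of $\Sch_H$, the infiniteness and connectedness of $\Sch_H$ force $P$ to have dangling half-edges, so the completion lemma yields continuum-many pairwise distinct subgroups sharing the pattern $P$, hence lying in the neighborhood. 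This gives $\Sub_{[\infty]}(\mathbb{F}_r)\subseteq\mathcal{K}(\mathbb{F}_r)$ for $r<\infty$ — hence equality, since $\Sub_{[\infty]}(\mathbb{F}_r)$ is the complement of the open set of finite-index subgroups and so is closed — and $\Sub(\mathbb{F}_\infty)\subseteq\mathcal{K}(\mathbb{F}_\infty)$, whence $\mathcal{K}(\mathbb{F}_\infty)=\Sub(\mathbb{F}_\infty)$.

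For the dense orbit I would construct an explicit universal subgroup. There are only countably many patterns that can arise around the basepoint of the Schreier graph of a subgroup; take finite partial graphs realizing all of them around distinct marked vertices, amalgamate them into one connected complete infinite labeled graph $X$ via the completion lemma, and let $H^\ast$ be the corresponding subgroup, which is of infinite index, hence lies in $\mathcal{K}(\mathbb{F}_r)$ in both cases. For any $K\in\mathcal{K}(\mathbb{F}_r)$ and any basic neighborhood $U$ of $K$, the pattern defining $U$ occurs around some vertex $y$ of $X$, and choosing $g$ carrying the basepoint of $X$ to $y$ places a conjugate of $H^\ast$ in $U$. Thus the conjugation orbit of $H^\ast$ meets every nonempty basic open subset of $\mathcal{K}(\mathbb{F}_r)$ and is dense. (Alternatively, one shows topological transitivity — realizing the two relevant patterns around two vertices of a single infinite graph, then translating — and invokes that $\mathcal{K}(\mathbb{F}_r)$, being closed in $\{0,1\}^{\mathbb{F}_r}$, is Polish.)

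The main obstacle is the completion lemma and its bookkeeping: one must verify that a finite partial labeled graph with a dangling half-edge always completes to a connected complete (necessarily infinite) labeled graph, and that the completion can be steered so as to realize prescribed patterns at prescribed vertices while producing pairwise distinct, i.e.\ pairwise non-isomorphic pointed, outcomes. The remaining ingredients — the Schreier-graph dictionary and Cantor--Bendixson theory — are routine.
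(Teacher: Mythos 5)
Your proposal is correct and is essentially the approach the paper relies on: the paper does not reprove Proposition \ref{freegroup} (it cites \cite{totipotent} and \cite{AG}), but the strategy it records in Remark \ref{ideafreegp} --- identify subgroups with pointed coverings of the bouquet, i.e.\ Schreier graphs, complete a finite subgraph to infinite-degree coverings to see that every neighborhood of the relevant subgroups is uncountable, and amalgamate countably many finite patterns into a single infinite-degree covering to produce a dense conjugation orbit --- is exactly what you carry out. The only adjustment needed is in your dense-orbit step: realize only the patterns carrying a dangling half-edge (equivalently, the patterns of subgroups lying in the perfect kernel; for $r<\infty$ a complete finite pattern, coming from a finite-index subgroup, cannot embed into a connected infinite complete labeled graph), which are the only patterns your argument actually uses.
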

	
	\begin{remark}\label{ideafreegp}
	The key point of the proof of Proposition \ref{freegroup} is the identification of subgroups of $\mathbb{F}_r = \langle (a_i)_{i \in \llbracket 1, r \rrbracket}\rangle$ with coverings of the bouquet $B_r$ of $r$ circles (which are exactly the Schreier graphs of subgroups of $\mathbb{F}_r$ with respect to the generating set $\{a_i, i \in \llbracket 1, r \rrbracket\}$). More precisely, this relies on the two following facts. Let $B$ be a (possibly infinite) graph. Then: \begin{itemize}
		\item for every covering $E \to B$ and every connected finite subgraph $K \subsetneq E$, there exists a covering $p : E' \to B$ whose degree is infinite and such that $E'$ contains a subgraph $K'$ which is isomorphic to $K$ as a labeled graph (this allows to obtain the aforementioned explicit description of the perfect kernel);
		\item for every coverings $E_i \to B$ ($i \in \mathbb{N}$), given any connected finite subgraph $K_i \subsetneq E_i$ (for every $i \in \mathbb{N}$), there exists a covering $p : E \to B$ whose degree is infinite and such that $E$ contains disjoint subgraphs $(K_i')_{i \in \mathbb{N}}$ such that $K_i'$ and $K_i$ are isomorphic as labeled graphs for every $i \in \mathbb{N}$ (this allows to build a dense orbit).
	\end{itemize}
	This point of view will be useful in the study of the action by conjugation of a semidirect product $\mathbb{Z}^d \rtimes \mathbb{F}_r$ on its perfect kernel (\textit{cf.} Section \ref{semidirect}). 
	\end{remark}
	\subsection{Graphs of groups}
	
	In this section, we recall the fundamentals of Bass-Serre theory. We refer to \cite{serre} for more details. A \textbf{graph of groups} is an oriented graph $\mathscr{H}$ equipped with a collection of \textbf{vertex groups} $G_v, v \in \mathcal{V}\left(\mathscr{H}\right)$, a collection of \textbf{edge groups} $G_e, e \in \mathcal{E}\left(\mathscr{H}\right)$ such that $G_e = G_{\overline{e}}$ for every edge $e \in \mathcal{E}\left(\mathscr{H}\right)$ and, for $\var \in \{\src, \trg\}$, injective homomorphisms $\iota_{e,\var}: G_e \hookrightarrow G_{\var(e)}$ such that $\iota_{e, \src} = \iota_{\overline{e}, \trg}$ for every edge $e$.  
	
	The \textbf{fundamental group} of a graph of groups $\mathscr{H}$ is defined by the following presentation: let us fix a spanning tree $\mathscr{T}$ in $\mathscr{H}$. Denote by $\{t_e, e \in \mathcal{E}\left(\mathscr{H}\right)\}$ a generating set of the free group $\mathbb{F}_{|\mathcal{E}\left(\mathscr{H}\right)|}$ of rank $|\mathcal{E}\left(\mathscr{H}\right)|$ and define \begin{equation}\label{presd} G = \left(*_{v \in \mathcal{V}\left(\mathscr{H}\right)}G_v * \mathbb{F}_{|\mathcal{E}\left(\mathscr{H}\right)|}\right) / \left\llangle \left(t_e\iota_{e,\trg}(x)t_e^{-1}\iota_{e, \src}(x)^{-1}\right)_{(e,x) \in \mathcal{E}\left(\mathscr{H}\right) \times G_e},  (t_et_{\overline{e}})_{e \in \mathcal{E}\left(\mathscr{H}\right)}, (t_e)_{e \in \mathcal{E}\left(\mathscr{T}\right)} \right\rrangle \end{equation}
	The isomorphism class of the group $G$ defined as above does not depend on the choice of the spanning tree (\textit{cf.} Proposition 20 in \cite{serre}, Section 5.1).
	
	There exists a (unique up to unique isomorphism) oriented tree $\mathcal{T}$, called \textbf{Bass-Serre tree} of $\mathscr{H}$ on which $G$ acts without inversion with quotient $\mathscr{H}$ and such that there exist sections $\mathcal{V}\left(\mathscr{H}\right) \to \mathcal{V}\left(\mathcal{T}\right)$ and $\mathcal{E}\left(\mathscr{H}\right) \to \mathcal{E}\left(\mathcal{T}\right)$ (which we denote by $v \to \tilde{v}$ and $e \to \tilde{e}$ respectively) of the projection $\pi : \mathcal{T} \to \mathscr{H}$ satisfying the following conditions: \begin{equation}\label{vertex}
		\Stab(\tilde{v}) = G_v \ \forall v \in \mathcal{V}(\mathscr{H}).
	\end{equation}
	\begin{equation}\label{edge}
		\Stab(\tilde{e}) = G_e \ \forall e \in \mathcal{E}(\mathscr{H}). \end{equation}

    More precisely (\textit{cf.} \cite{serre}[Section 5.3]), the set of vertices of $\mathcal{T}$ is \[\mathcal{V}(\mathcal{T}) = \bigsqcup_{v \in \mathcal{V}(\mathscr{H})}G/G_v,\] and its set of edges is \[\mathcal{E}(\mathcal{T}) = \bigsqcup_{e \in \mathcal{E}(\mathscr{H})}G/G_e.\]
        
	Conversely, any group action $G \curvearrowright \mathcal{T}$ without inversion is obtained by this construction (\textit{cf.} \cite[Section 5]{serre}). 
	
	\section{\texorpdfstring{$\mathscr{H}$-preactions and $\mathscr{H}$-graphs}{$\mathscr{H}$-preactions and $\mathscr{H}$-graphs}}
	
	In this section we generalize the interpretation of graphs of subgroups as ``blown up and shrunk'' Schreier graphs obtained for HNN-extensions or amalgamated free products in \cite{hightrans}, for Baumslag-Solitar groups in \cite{solitar1} and for rank $1$ GBS groups in \cite{bontemps}.

 This gives us a tool to approximate some subgroups of iterated HNN-extensions and amalgamated free products, that we will apply to GBS groups. 
	
	\subsection{General setting}\label{generalsetting}

In this section, we fix a graph of groups $\mathscr{H}$ endowed with a spanning tree $\mathscr{T}$ and we denote by $G$ the fundamental group of $\mathscr{H}$, defined by Presentation \eqref{presd}. To any subgroup of $G$, we will associate a ``$\mathscr{H}$-graph'', which is a labeled graph that satisfies some combinatorial conditions. It will reduce the problem of approximating a subgroup of $G$ to the one of approximating its $\mathscr{H}$-graph.

First, we introduce the notion of \textbf{$\mathscr{H}$-preaction} of $G$. Informally, this is a collection of partial bijections (each of these corresponding to an edge generator or an element of a vertex group in the presentation \eqref{presd}) such that the partial bijections associated to generators of a vertex group $G_v$ define a genuine $G_v$-action. The following definition generalizes the notion of an $\mathcal{H}$-preaction introduced for GBS groups of rank $1$ in \cite[Definition 3.1]{bontemps}.

	\begin{definition}
		An $\mathscr{H}$-preaction on a countable set $X$ is a collection of (possibly non-transitive) right $G_v$-actions $\alpha_v$ defined on subsets $D_v$ of $X$ for every $v \in \mathcal{V}(\mathscr{H})$ (\textit{i.e.} homomorphisms $\alpha_v : G_v \to \Sym(D_v)$) and of partial bijections $\beta_e : \dom(\beta_e) \to \rng(\beta_e)$ for every $e \notin \mathcal{E}(\mathscr{T})$ satisfying the following conditions: \begin{enumerate}
			\item for every $e \in \mathcal{E}(\mathscr{H}) \setminus \mathcal{E}(\mathscr{T})$, \begin{itemize}
				\item $\dom(\beta_e) \subseteq D_{\src(e)}$;	
				\item $\dom(\beta_e)$ is $\alpha_{\src(e)}(\iota_{e, \src}(G_e))$-stable;
			\item $\dom(\beta_{\overline{e}}) = \rng(\beta_e)$ and $\beta_{\overline{e}} = \beta_e^{-1}$;
		
			\end{itemize}
			\item for every $e \in \mathcal{E}\left(\mathscr{T}\right)$, for every $g \in G_e$ and $x \in D_{\src(e)} \cap D_{\trg(e)}$, one has \[x \cdot \alpha_{\src(e)}\left(\iota_{e, \src}(g)\right) = x \cdot \alpha_{\trg(e)}\left(\iota_{e, \trg}(g)\right);\]
			\item for every $e \in \mathcal{E}(\mathscr{H}) \setminus \mathcal{E}(\mathscr{T})$, for every $g \in G_e$ and $x \in \dom(\beta_e)$, one has \[x \cdot \alpha_{\src(e)}\left(\iota_{e, \src}(g)\right) \cdot \beta_e = x \cdot \beta_e \cdot \alpha_{\trg(e)}\left(\iota_{e, \trg}(g)\right);\]
			\item for every vertices $v, w \in \mathcal{V}(\mathscr{H})$, for every vertex $u \in [v;w]_{\mathscr{T}}$, one has \[D_v \cap D_w \subseteq D_u.\]
		\end{enumerate}
	\end{definition}
	
	As an example, any $G$-action is an $\mathscr{H}$-preaction.
	
	We say that an $\mathscr{H}$-preaction $\alpha' = \{D_v' \curvearrowleft^{\alpha_v'} G_v, \beta_e': v \in \mathcal{V}(\mathscr{H}), e \in \mathcal{E}(\mathscr{H}) \smallsetminus \mathcal{E}(\mathscr{T}) \}$ on a countable set $X'$ \textbf{extends} an $\mathscr{H}$-preaction $\alpha = \{D_v \curvearrowleft^{\alpha_v} G_v, \beta_e: v \in \mathcal{V}(\mathscr{H}), e \in \mathcal{E}(\mathscr{H}) \smallsetminus \mathcal{E}(\mathscr{T}) \}$ on a countable set $X$ if there exists an injection $\iota : X \hookrightarrow X'$ such that \begin{itemize}
	    \item for every $v \in \mathcal{V}\left({\mathscr{H}}\right)$, $\iota(D_v) \subseteq D_v'$, and on $D_v$ one has $\alpha'_v(g) \circ \iota = \iota \circ \alpha_v(g)$ for every $g \in G_v$;
		\item for every edge $e \in \mathcal{E}\left({\mathscr{H}}\right) \setminus \mathcal{E}(\mathscr{T})$, one has $\iota(\dom(\beta_e)) \subseteq \dom(\beta_e')$, and on $\dom(\beta_e)$ one has $\beta'_e \circ \iota = \iota \circ \beta_e$.
	\end{itemize}
	We then call $\alpha$ a \textbf{sub-$\mathscr{H}$-preaction} of $\alpha'$. 
	
	To alleviate notations, given an $\mathscr{H}$-preaction $\alpha$ defined on a countable set $X$, we will simply denote by $x \cdot g$ the element $x \cdot \alpha_v(g)$ if $x \in D_v$ and $g \in G_v$, and by $x \cdot t_e$ the element $x \cdot \beta_e$ if $x \in \dom(\beta_e)$. Likewise, we will write $\dom(t_e)$ and $\dom(\beta_e)$ interchangeably. 
	
	Given an $\mathscr{H}$-preaction $\alpha$ defined on a countable set $X$, one can define its \textbf{Schreier graph $\Sch(\alpha)$} in a similar way than for genuine actions: its set of vertices is $X$ and
	\begin{itemize}
	    \item for every $v \in \mathcal{V}(\mathscr{H})$ and every $x \in D_v$, there is an edge labeled $g$ with source $x$ and target $x \cdot g$ for every $g \in G_v$;
	    \item for every $e \in \mathcal{E}(\mathscr{H}) \smallsetminus \mathcal{E}(\mathscr{T})$ and every $x \in \dom(t_e)$, there is an edge labeled $t_e$ with source $x$ and target $x \cdot t_e$.
	\end{itemize}
	
		To any $\mathscr{H}$-preaction $\alpha$ on a pointed countable set $(X,x)$ is associated its \textbf{stabilizer $\Stab_{\alpha}(x)$}: any cycle $c$ of $\Sch(\alpha)$ based at $x$ defines an element $\varphi(c)$ of $G$, obtained by multiplying the labels of the edges of $c$. The stabilizer $\Stab_{\alpha}(x)$ is the subgroup of $G$ generated by all the elements $\varphi(c)$, for $c$ varying in the set of cycles of $\Sch(\alpha)$ based at $x$. 
	
	Now we turn to the definition of $\mathscr{H}$-graphs. 
	Informally, the $\mathscr{H}$-graph of an $\mathscr{H}$-preaction $\alpha$ is obtained from the Schreier graph of $\alpha$ in the following way: for every vertex $v \in \mathcal{V}\left(\mathscr{H}\right)$, shrink
every $G_v$-orbit $x \cdot G_v$ (with $x \in D_v$) to one vertex, and then label the obtained
vertex by the action $x \cdot G_v \curvearrowleft G_v$. As the isomorphism class of a transitive right action of a countable group on a countable set $X$ is uniquely determined by the stabilizer of any point of $X$, these labels will be $G_v$-conjugacy classes of subgroups of $G_v$. For any two vertices $x \cdot G_v$, $x' \cdot G_w$, we put some edges labeled $e \in \mathcal{E}(\mathscr{H})$ between $x \cdot G_v$ and $x' \cdot G_w$ if and only if $(\src(e), \trg(e)) = (v,w)$ and \begin{itemize}
	    \item either $e \in \mathscr{T}$ and $x \cdot G_v \cap x' \cdot G_w \neq \emptyset$;
        \item or $e \notin \mathscr{T}$ and $x \in \dom(t_e)$ and $x \cdot G_v t_e \cap x' \cdot G_w \neq \emptyset$.
	\end{itemize}

	More specifically, denoting by $s_e$ the element of $G$ equal to $t_e$ if $e \notin \mathcal{E}(\mathscr{H})$ and to $\mathrm{id}$ otherwise, the number of such edges is precisely the number of $G_e$-orbits of $x \cdot G_v $ which are sent by $s_e$ onto $x' \cdot G_w$.
    Because of the fact that vertex groups need not be commutative for the moment, we also add a label to the inferior and the superior half-edge in order to remember ``where'' do the two orbits $x \cdot G_vs_e$ and $x' \cdot G_w$ intersect.

    We now give the formal definition, which is a generalisation of the notion of $\mathcal{H}$-graph introduced in \cite[Definition 3.4]{bontemps}: 

	\begin{definition}
		 Let $\alpha$ be an $\mathscr{H}$-preaction of $G$ on a countable set $X$. One defines the $\mathscr{H}$-graph $\mathcal{G}$ of $\alpha$ as follows: 
		\begin{enumerate}
			\item its vertex set is the set of $G_v$-orbits for $v$ varying in $\mathcal{V}(\mathscr{H})$: \[\mathcal{V}(\mathcal{G}) = \bigsqcup_{v \in \mathcal{V}(\mathscr{H})}D_v/G_v;\]
			\item its edge set is $\mathcal{E}(\mathcal{G}) = \mathcal{E}^+(\mathcal{G}) \sqcup \mathcal{E}^-(\mathcal{G})$ where 
			\[\mathcal{E}^+(\mathcal{G}) = \bigsqcup_{e \in \mathcal{E}^+(\mathscr{T})}\left(D_{\src(e)} \cap D_{\trg(e)}\right)/\iota_{e, \src}(G_e) \bigsqcup \bigsqcup_{e \in \mathcal{E}^+(\mathscr{H}) \setminus \mathcal{E}(\mathscr{T})}\dom(t_e) / \iota_{e, \src}(G_e)  \]
			and 
			\[\mathcal{E}^-(\mathcal{G}) = \bigsqcup_{e \in \mathcal{E}^+(\mathscr{T})}\left(D_{\src(e)} \cap D_{\trg(e)}\right)/\iota_{e, \trg}(G_e) \bigsqcup \bigsqcup_{e \in \mathcal{E}^+(\mathscr{H}) \setminus \mathcal{E}(\mathscr{T})}\rng(t_e) / \iota_{e, \trg}(G_e)  \]
			with
			\begin{itemize}
				\item for every $e \in \mathcal{E}^+(\mathscr{T})$, for every $x \in D_{\src(e)} \cap D_{\trg(e)}$: 
				\[\src \left(x \cdot \iota_{e, \src}(G_e)\right) = x \cdot G_{\src(e)}\]
				and 
				\[\trg \left(x \cdot \iota_{e, \trg}(G_e)\right) = x \cdot G_{\trg(e)}.\]
				Moreover 
				\[ \overline{x \cdot \iota_{e, \src}(G_e)} = x \cdot \iota_{e, \trg}(G_e); \]
				\item for every $e \in \mathcal{E}^+(\mathscr{H}) \setminus \mathcal{E}(\mathscr{T})$, for every $x \in D_{\src(e)} \cap t_e^{-1}\left(D_{\trg(e)}\right)$: 
				\[\src \left(x \cdot \iota_{e, \src}(G_e)\right) = x \cdot G_{\src(e)}\]
				and 
				\[\trg \left(x \cdot \iota_{e, \trg}(G_e)\right) = x t_e \cdot G_{\trg(e)}.\]
				Moreover 
				\[ \overline{x \cdot \iota_{e, \src}(G_e)} = x t_e \cdot \iota_{e, \trg}(G_e); \]
			\end{itemize}
			\item \label{ident} each vertex $x \cdot G_v$ is labeled $\left([\Stab_{G_v}(x)]_{G_v}, v\right)$ (where $[\Stab_{G_v}(x)]_{G_v}$ denotes the conjugacy class of the subgroup $\Stab_{G_v}(x)$ in $G_v$);
			\item \label{choix} for every vertex $x \cdot G_v$, we fix an identification between $x \cdot G_v$ and the quotient set $\Stab_{G_v}(x) \backslash G_v$ (in a $G_v$-equivariant way). Each edge $(\Stab_{G_v}(x)g) \iota_{e, \src}(G_e)$ is labeled $e$ and its inferior half-edge is labeled $(\Stab_{G_v}(x)g) \iota_{e, \src}(G_e)$; in particular (applying this last condition to $\overline{e}$), it is labeled $\overline{(\Stab_{G_v}(x)g) \iota_{e, \src}(G_e) }$ at its target. 
		\end{enumerate}
	\end{definition}

    \begin{remark}
    In Item \ref{ident}, the data of $[\Stab_{G_v}(x)]_{G_v}$ is equivalent to the data of the isomorphism class of the $G_v$-action $\Stab_{G_v}(x) \backslash G_v \curvearrowleft G_v$, or equivalently, of the isomorphism class of the $G_v$-action on $x \cdot G_v$.
    \end{remark}

\begin{example}
    Let us consider the HNN-extension $G$ of some group $G_v$ over a group $G_e$ defined by the two inclusions $i : G_e \hookrightarrow G_v$ and $j : G_e \hookrightarrow G_v$. It is the fundamental group of the graph of groups defined in Figure \ref{hnn}.

    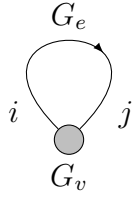
\begin{figure}[ht]
				\center
				\begin{tikzpicture}
					\node[draw,circle,fill=gray!50] (a) at (-3,0) {};
                    \draw (-3,-0.5) node {$G_v$};
					\draw[>=latex, directed] (a) to [out=135,in=45,looseness=20] node[very near start, below left]{$i$} node[very near end, below right]{$j$} node[above]{$G_e$} (a);
				\end{tikzpicture}
                \caption{Graph of groups $\mathscr{H}$ defining an HNN-extension}
				\label{hnn}
			\end{figure}
    The group $G$ inherits the following presentation \begin{equation}
        G \simeq \langle G_v, t_e \mid t_e^{-1} i(g) t_e = j(g) \rangle.
    \end{equation}

    Let us consider an $\mathscr{H}$-preaction that consists of two $G_v$-orbits $x \cdot G_v$, $y \cdot G_v$, and such that $t_e$ sends two points of $x \cdot G_v$ (say $x,x'$) that lie in two different $i(G_e)$-orbits to two other points in $y \cdot G_v$ (say $y,y'$) that lie in two different $j(G_e)$-orbits (see Figure \ref{expreac}).
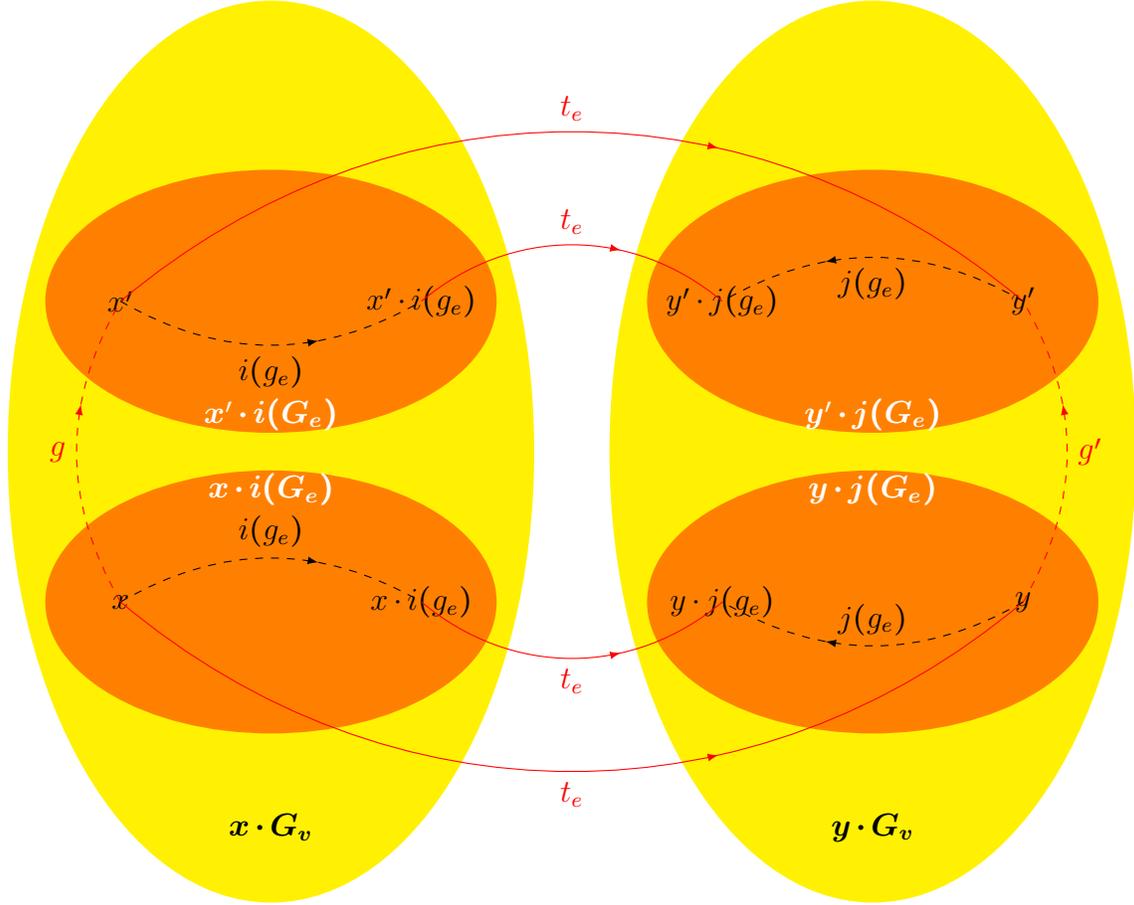
\begin{figure}[ht]
\begin{tikzpicture}
\def\fst{(0,0) ellipse (3.5 and 6)}
\def\snd{(8,0) ellipse (3.5 and 6)}
\def\ffst{(0,2) ellipse (3 and 1.75)}
\def\fsnd{(0,-2) ellipse (3 and 1.75)}
\def\sfst{(8,2) ellipse (3 and 1.75)}
\def\ssnd{(8,-2) ellipse (3 and 1.75)}
\fill[yellow] \fst \snd ;
\fill[orange] \ffst ;
\fill[orange] \fsnd ;
\fill[orange] \sfst ;
\fill[orange] \ssnd ;
\draw (-2,-2) node {$x$};
\draw (2,-2) node {$x \cdot i(g_e)$};
\draw (-2,2) node {$x'$};
\draw (2,2) node {$x' \cdot i(g_e)$};
\draw (6,-2) node {$y \cdot j(g_e)$};
\draw (10,-2) node {$y$};
\draw (6,2) node {$y' \cdot j(g_e)$};
\draw (10,2) node {$y' $};
\draw[color=white] (0,0.5) node{$\bm{x' \cdot i(G_e)}$};
\draw[color=white] (0,-0.5) node{$\bm{x \cdot i(G_e)}$};
\draw[color=white] (8,0.5) node{$\bm{y' \cdot j(G_e)}$};
\draw[color=white] (8,-0.5) node{$\bm{y \cdot j(G_e)}$};
\draw[>=latex, directed, color=red] (-2,2) to[bend left=40] node[above]{$t_e$} (10,2);
\draw[>=latex, directed, color=red] (2,2) to[bend left=40] node[above]{$t_e$} (6,2);
\draw[>=latex, directed, color=red] (-2,-2) to[bend left=-40] node[below]{$t_e$} (10,-2);
\draw[>=latex, directed, color=red] (2,-2) to[bend left=-40] node[below]{$t_e$} (6,-2);

\draw[>=latex, directed, dashed] (-2,2) to[bend left=-30] node[below]{$i(g_e)$} (2,2);
\draw[>=latex, directed, dashed] (10,2) to[bend left=-30] node[below]{$j(g_e)$} (6,2);
\draw[>=latex, directed, dashed] (-2,-2) to[bend left=30] node[above]{$i(g_e)$} (2,-2);
\draw[>=latex, directed, dashed] (10,-2) to[bend left=30] node[above]{$j(g_e)$} (6,-2);

\draw[>=latex, directed, dashed, color=red] (-2,-2) to[bend left=30] node[left]{$g$} (-2,2);
\draw[>=latex, directed, dashed, color=red] (10,-2) to[bend right=30] node[right]{$g'$} (10,2);

\draw (0,-5) node{$\bm{x \cdot G_v}$} ;
\draw (8,-5) node{$\bm{y \cdot G_v}$};
\end{tikzpicture}
\caption{Schreier graph of an $\mathscr{H}$-preaction (here $g,g' \in G_v$ and $g_e \in G_e$)}
\label{expreac}
\end{figure}

The $\mathscr{H}$-graph of the above action consists of \begin{itemize}
    \item two vertices, that correspond to the two $G_v$-orbits;
    \item two edges between those vertices, that correspond to the $i(G_e),j(G_e)$-orbits. 
\end{itemize}

This graph is represented in Figure \ref{hgraph}.
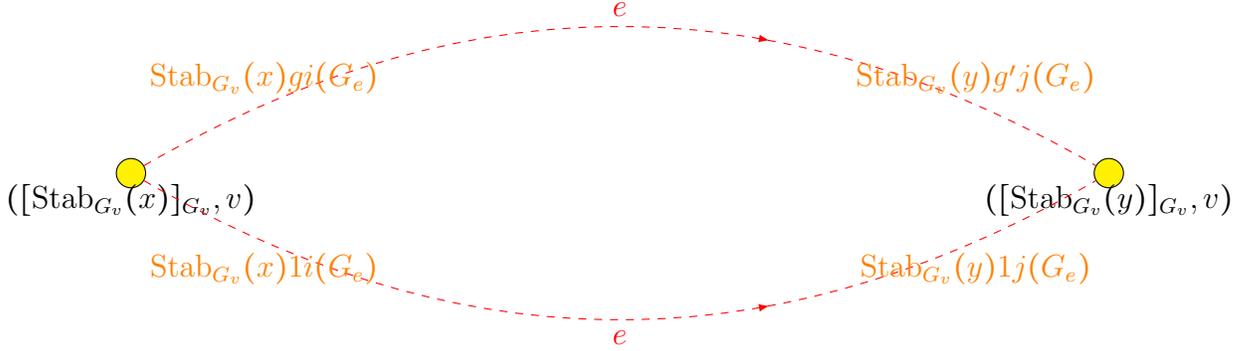
\begin{figure}[ht]
\begin{tikzpicture}
    \node[draw,circle,fill=yellow] (x) at (10,0) {};
     \node[draw,circle,fill=yellow] (y) at (23,0) {};
    \draw (x) node[below] {$([\Stab_{G_v}(x)]_{G_v},v)$};
    \draw (y) node[below] {$([\Stab_{G_v}(y)]_{G_v},v)$};
    \draw[>=latex, directed, dashed, color=red] (x) to[bend left=30] node[above]{$e$} node[above, very near start]{\color{orange} $\Stab_{G_v}(x) g i(G_e)$} node[above, very near end]{\color{orange} $\Stab_{G_v}(y) g' j(G_e)$} (y);
     \draw[>=latex, directed, dashed, color=red] (x) to[bend left=-30] node[below]{$e$} node[below, very near start]{\color{orange} $\Stab_{G_v}(x) 1_{G_v} i(G_e)$} node[below, very near end]{\color{orange} $\Stab_{G_v}(y) 1_{G_v} j(G_e)$} (y);
   \end{tikzpicture} 
   \caption{$\mathscr{H}$-graph of the $\mathscr{H}$-preaction represented in Figure \ref{expreac}}
    \label{hgraph}

\end{figure}

Notice that the chosen identifications required in Item \ref{choix} send $x$ (\textit{resp.} $y$) to the coset $\Stab_{G_v}(x)1$ (\textit{resp.} $\Stab_{G_v}(y)1$) of $\Stab_{G_v}(x) \backslash G_v$ (\textit{resp.} $\Stab_{G_v}(y) \backslash G_v$).

\end{example}

	Similarly, one defines the $\mathscr{H}$-graph of a subgroup $H$ using the correspondence between subgroups and right actions:

    \begin{definition}
        The $\mathscr{H}$-graph of a subgroup $H$ of $G$ is the $\mathscr{H}$-graph of the right action of $G$ on $H \backslash G$.
    \end{definition}

\begin{remark}\label{eqbsh}
    As observed in \cite[Section 3]{bontemps} in the case of infinite cyclic vertex and edge groups, the data of the graph of groups of $H$ is equivalent to its $\mathscr{H}$-graph: recall that the set of vertices of the Bass-Serre tree $\mathcal{T}$ associated to a graph of groups $\mathscr{H}$ of fundamental group $G$ is $\bigsqcup_{v \in \mathcal{V}(\mathscr{H})}G/G_v$, and its set of edges is $\bigsqcup_{e \in \mathcal{E}(\mathscr{H})}G/G_e$. Thus, for any subgroup $H$ of $G$, the set of vertices (\textit{resp.} of edges) of $H \backslash \mathcal{T}$ is exactly \[\bigsqcup_{v \in \mathcal{V}(\mathscr{H})}H \backslash (G/G_v) = \bigsqcup_{v \in \mathcal{V}(\mathscr{H})}(H \backslash G)/G_v,\]
    (\textit{resp.} $\bigsqcup_{e \in \mathcal{E}(\mathscr{H})}H \backslash (G/G_e) = \bigsqcup_{e \in \mathcal{E}(\mathscr{H})}(H \backslash G)/G_e)$),
    which is exactly the set of vertices (\textit{resp.} of edges) of the $\mathscr{H}$-graph of $H$ (taking the quotient $H \backslash G$ amounts to taking the Schreier graph of $H$, then taking the disjoint union over the vertices of $\mathscr{H}$ amounts to ``blowing it up'', and taking the quotient by every vertex (\textit{resp.} edge) group amounts to shrinking the orbits of vertex groups (\textit{resp.} edge groups), which amounts to constructing the vertices (\textit{resp.} edges) of the $\mathscr{H}$-graph of $H$).
\end{remark}
	
Now we want to define an abstract notion of $\mathscr{H}$-graph. To achieve this, we first prove the following lemma, which gives a combinatorial condition on the labels of the vertices of the $\mathscr{H}$-graph of a preaction.

\begin{lemma}
		Let $\mathcal{G}$ be the $\mathscr{H}$-graph of an $\mathscr{H}$-preaction $\alpha$ defined on a set $X_0$. For every $E \in \mathcal{E}(\mathcal{G})$, denoting by $e$ the label of $E$, by $([\Lambda_0]_{G_{\src(e)}}, \src(e))$ (\textit{resp.} $([\Lambda_1]_{G_{\trg(e)}}, \trg(e))$) the label of $\src(E)$ (\textit{resp.} $\trg(E)$), by $\Lambda_0 g_0 \iota_{e, \src}(G_e)$ (\textit{resp.} $\Lambda_1 g_1 \iota_{e, \trg}(G_e)$) the label of the inferior (\textit{resp.} superior) half-edge of $E$, one has \[\left[\iota_{e, \src}^{-1}\left(g_0^{-1} \Lambda_0 g_0\right)\right]_{G_e} = \left[\iota_{e, \trg}^{-1}\left(g_1^{-1}\Lambda_1g_1\right)\right]_{G_e}.\] 
	\end{lemma}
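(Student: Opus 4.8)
The plan is to unwind the definitions involved and reduce the statement to a single application of one of the compatibility axioms in the definition of an $\mathscr{H}$-preaction. Two preliminary observations lighten the bookkeeping. First, the claimed identity is symmetric under replacing $e$ by $\overline{e}$, which swaps $\src(e)\leftrightarrow\trg(e)$, $\Lambda_0\leftrightarrow\Lambda_1$, $g_0\leftrightarrow g_1$ and $\iota_{e,\src}\leftrightarrow\iota_{e,\trg}$, and replaces $E$ by $\overline{E}$; so one may assume $E\in\mathcal{E}^+(\mathcal{G})$. Second, the elements $g_0,g_1$ appearing in the half-edge labels are only well defined up to left multiplication by $\Lambda_0$, $\Lambda_1$ and right multiplication by $\iota_{e,\src}(G_e)$, $\iota_{e,\trg}(G_e)$; replacing them changes each side of the asserted equality only by conjugation inside $G_e$, so it suffices to prove the statement for one convenient choice — and for that choice one in fact obtains the sharper conclusion that the two subgroups of $G_e$ are literally equal.

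The concrete execution would go as follows. Fix $E\in\mathcal{E}^+(\mathcal{G})$ with label $e$; by definition of $\mathcal{E}^+(\mathcal{G})$ it is an $\iota_{e,\src}(G_e)$-orbit, so pick a point $x$ in it, with $x\in D_{\src(e)}\cap D_{\trg(e)}$ if $e\in\mathscr{T}$ and with $x\in D_{\src(e)}$, $x\cdot t_e\in D_{\trg(e)}$ if $e\notin\mathscr{T}$. Let $x_0$ and $x_1$ be the fixed equivariant basepoints of the orbits $\src(E)$ and $\trg(E)$, so that $\Lambda_0=\Stab_{G_{\src(e)}}(x_0)$ and $\Lambda_1=\Stab_{G_{\trg(e)}}(x_1)$ are the subgroups occurring in the vertex labels, and write $x=x_0\cdot g_0$ together with $x=x_1\cdot g_1$ (tree case) or $x\cdot t_e=x_1\cdot g_1$ (non-tree case); that $\trg(E)$ is the orbit of $x\cdot t_e$ in the non-tree case follows from the relation $\overline{x\,\iota_{e,\src}(G_e)}=x\,t_e\,\iota_{e,\trg}(G_e)$. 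Unwinding the identifications then shows that the inferior and superior half-edge labels are precisely $\Lambda_0 g_0\iota_{e,\src}(G_e)$ and $\Lambda_1 g_1\iota_{e,\trg}(G_e)$, and that $g_0^{-1}\Lambda_0 g_0=\Stab_{G_{\src(e)}}(x)$ while $g_1^{-1}\Lambda_1 g_1=\Stab_{G_{\trg(e)}}(y)$, where $y=x$ in the tree case and $y=x\cdot t_e$ in the non-tree case. Thus the statement reduces to the equality $\iota_{e,\src}^{-1}\bigl(\Stab_{G_{\src(e)}}(x)\bigr)=\iota_{e,\trg}^{-1}\bigl(\Stab_{G_{\trg(e)}}(y)\bigr)$ of subgroups of $G_e$.

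This last equality is where the axioms enter, and where one must be careful about domains. In the tree case, $x$ lies in $D_{\src(e)}\cap D_{\trg(e)}$, so the compatibility axiom for tree edges gives $x\cdot\iota_{e,\src}(h)=x\cdot\iota_{e,\trg}(h)$ for every $h\in G_e$; hence $\iota_{e,\src}(h)$ fixes $x$ if and only if $\iota_{e,\trg}(h)$ does, which is exactly the asserted equality. In the non-tree case, $x$ lies in $\dom(\beta_e)\cap D_{\src(e)}\cap\beta_e^{-1}(D_{\trg(e)})$, so the compatibility axiom for non-tree edges yields $(x\cdot\iota_{e,\src}(h))\cdot t_e=(x\cdot t_e)\cdot\iota_{e,\trg}(h)$, and the stability of $\dom(\beta_e)$ under $\alpha_{\src(e)}(\iota_{e,\src}(G_e))$ ensures $x\cdot\iota_{e,\src}(h)$ is still in $\dom(\beta_e)$; one inclusion is then immediate, and the reverse one follows by using that $\beta_e$ is injective on its domain to cancel $t_e$. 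Taking conjugacy classes in $G_e$ gives the lemma. I expect the only genuine difficulty to be notational rather than mathematical: one has to match correctly the several conventions packed into the definition of the $\mathscr{H}$-graph — especially the appearance of $\overline{e}$ in the superior half-edge label and the exact domains attached to $\beta_e$ — and the verification of these domain conditions is the step I would watch most carefully.
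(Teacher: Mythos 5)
Your proposal is correct and follows essentially the same route as the paper's proof: realize the half-edge labels by choosing points whose stabilizers are $g_0^{-1}\Lambda_0 g_0$ and $g_1^{-1}\Lambda_1 g_1$, then use the preaction compatibility axioms to identify the two pullbacks under $\iota_{e,\src}$ and $\iota_{e,\trg}$, concluding up to conjugacy in $G_e$. The only cosmetic difference is that the paper treats the tree and non-tree cases simultaneously by setting $s_e \in \{t_e, \id\}$ and writing the equivariance as a single stabilizer computation, whereas you split the two cases and check the domain and injectivity conditions for $\beta_e$ explicitly, which is the same argument written out more carefully.
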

	
	\begin{proof}
		For every $e \in \mathcal{E}(\mathscr{H})$, let us define $s_e = \left\{
		\begin{array}{ll}
			t_e & \mbox{if } e \notin \mathcal{E}(\mathscr{T}) \\
			id & \mbox{otherwise}
		\end{array}
		\right.$.
		By construction, there exist $x, y \in X_0$ and $g,g' \in G_e$ such that \begin{itemize}
		    \item $\Stab_{G_{\src(e)}}(x) = \Lambda_0$;
            \item $\Stab_{G_{\trg(e)}}(y) = \Lambda_1$;
            \item $x \cdot g_0 \iota_{e, \src}(g) s_e = y \cdot g_1 \iota_{e, \trg}(g')$.
		\end{itemize}

		Hence, denoting by $X = x \cdot g_0 \iota_{e, \src}(g)$ and $Y = y \cdot g_1 \iota_{e, \trg}(g')$, one has $Y=Xs_e$, thus \begin{align*} \Stab_{G_{\trg(e)}}(Y) \cap \iota_{e, \trg}(G_e) &= 
		    \Stab_{\iota_{e, \trg}(G_e)}(Y)\\ &= \Stab_{s_e^{-1}\iota_{e, \src(G_e)}s_e}(Xs_e) \\
            &= s_e^{-1}\left(\Stab_{\iota_{e, \src}(G_e)}(X)\right)s_e \\
            &= s_e^{-1}\left(\Stab_{G_{\src(e)}}(X) \cap \iota_{e, \src}(G_e)\right)s_e \\
            &=\iota_{e, \trg}\left(\iota_{e, \src}^{-1}\left(\Stab_{G_{\src(e)}}(X)\right)\right)
		\end{align*}
		so, taking the preimage under $\iota_{e, \trg}$ ,we get \[\iota_{e, \src}^{-1}(\Stab_{G_{\src(e)}}(X)) = \iota_{e, \trg}^{-1}(\Stab_{G_{\trg(e)}}(Y))\]
		which amounts to saying that \[g^{-1}\iota_{e, \src}^{-1}\left(g_0^{-1}\Lambda_0g_0\right)g = g'^{-1}\iota_{e, \trg}^{-1}\left(g_1^{-1}\Lambda_1g_1\right)g'\]
		which proves the statement. 
	\end{proof}
	
	Using this lemma, one can extend the notion of $\mathscr{H}$-graph as follows:
	
	\begin{definition}
		An $\mathscr{H}$-graph $\mathcal{G}$ is a labeled graph satisfying the following conditions:
		\begin{itemize}
			\item every vertex is labeled $\left([\Lambda]_{G_v}, v\right)$ for some $v \in \mathcal{V}(\mathscr{H})$ and some subgroup $\Lambda$ of $G_v$;
			\item every edge $E \in \mathcal{E}(\mathcal{G})$ is labeled by an edge $e \in \mathcal{E}(\mathscr{H})$ such that $\src(E)$ is labeled $(C, \src(e))$ (for some $G_{\src(e)}$-conjugacy class $C$ of subgroups of $G_{\src(e)}$) and $\trg(E)$ is labeled $(C', \trg(e))$ (for some $G_{\trg(e)}$-conjugacy class $C'$ of subgroups of $G_{\trg(e)}$);
			\item the inferior half-edge of an edge labeled $e$ with source labeled $\left([\Lambda]_{G_v}, v\right)$ is labeled by an element of $\Lambda \backslash G_v / \iota_{e, \src}(G_e)$;
			\item every edge labeled $e$ whose inferior half-edge is labeled $\Lambda_0 g_0 \iota_{e, \src}(G_e)$ (with $\Lambda_0 \leq G_{\src(e)}$) and whose superior half-edge is labeled $\Lambda_1 g_1 \iota_{e, \trg}(G_e)$ (with $\Lambda_1 \leq G_{\trg(e)}$) satisfies \[\left[\iota_{e, \src}^{-1}\left(g_0^{-1} \Lambda_0 g_0\right)\right]_{G_e} = \left[\iota_{e, \trg}^{-1}\left(g_1^{-1} \Lambda_1 g_1\right)\right]_{G_e};\]
			\item inferior (\textit{resp.} superior) half-edges of different edges labeled $e$ (for some $e \in \mathcal{E}(\mathscr{H})$) sharing the same source (\textit{resp.} target) cannot share the same label.
		\end{itemize}
	\end{definition}
	
	A vertex $V$ of an $\mathscr{H}$-graph labeled $([\Lambda]_{G_v},v)$ is called \textbf{saturated relatively to an edge $e \in \mathcal{E}(\mathscr{H})$} if and only if $\src(e) = v$ and the set of labels of the inferior half-edges of edges labeled $e$ with source $V$ is the whole $\Lambda \backslash G_v / \iota_{e, \src}(G_e)$. The vertex $V$ is called \textbf{saturated} if it is saturated relatively to every edge with source $v$. An $\mathscr{H}$-graph is called \textbf{saturated} if all its vertices are saturated. In the case where edge groups have finite index in vertex groups, notice that this holds if and only if every vertex labeled $([\Lambda]_{G_v},v)$ has exactly $\left|\Lambda \backslash G_v / \iota_{e, \src}(G_e)\right|$ outgoing edges labeled $e$ for every edge $e \in \mathcal{E}(\mathscr{H})$ satisfying $\src(e) = v$. Finally, an $\mathscr{H}$-action is called saturated if its $\mathscr{H}$-graph is saturated, namely if it is a genuine $G$-action.
	
	In the formalism of Bass, who introduced the right notion of coverings in the setting of graphs of groups in \cite{bass}, an $\mathscr{H}$-graph $\mathcal{G}$ is the base space of an \emph{immersion} $\mathcal{G} \to \mathscr{H}$ of the graph of groups $\mathscr{H}$. This immersion is a \emph{covering} if and only if $\mathcal{G}$ is saturated, that is to say, $\mathcal{G}$ is the $\mathscr{H}$-graph of a subgroup of $G$. Notice however that the notion of coverings of graphs of groups introduced by Bass is finer than our notion of $\mathscr{H}$-graphs, as he establishes a correspondence between $\Sub(G)$ and coverings of the graph of groups $\mathscr{H}$. Here, different subgroups can share the same $\mathscr{H}$-graph: consider for instance a graph of groups $\mathscr{H}$ defined by a loop based at a vertex labeled $H$ (for some abstract group $H$), and take the edge group to be equal to $H$, and both inclusions to be the identity. The fundamental group of $\mathscr{H}$ is $H \times \Z$ and for any element $h$ of $H$, the $\mathscr{H}$-graph of the cyclic subgroup $\langle (h,1) \rangle$ consists of a single loop based at a vertex labeled $H \cap \langle (h,1) \rangle = \{\id\}$. However, the subgroups $\langle (h,1) \rangle$ are pairwise different when $h$ varies in $H$. 
	
	The three following lemmas  (\ref{completiond}, \ref{fini} and \ref{forest}) give rise to a useful tool to approximate subgroups of $G$. We show that, under some conditions, an approximation of the $\mathscr{H}$-graph of a subgroup gives rise to an approximation of the subgroup itself. More specifically, if we start from a subgroup $H$ of $G$ and any non-saturated $\mathscr{H}$-preaction $\alpha_1$ of the corresponding $\mathscr{H}$-preaction, Lemma~\ref{forest} and Lemma~\ref{completiond} will allow to extend $\alpha_1$ into a saturated $\mathscr{H}$-preaction $\beta_1$ whose $\mathscr{H}$-graph $\mathcal{G}$ contains the $\mathscr{H}$-graph $\mathcal{G}_1$ of $\alpha_1$, and such that $\mathcal{G} / \mathcal{G}_1$ is a tree. If moreover we were able to build a finite connected $\mathscr{H}$-graph that contains both a cycle $\mathcal{L}$ and $\mathcal{G}_1$ as disjoint subgraphs, then Lemma~\ref{fini}, together with Lemma~\ref{forest} and Lemma~\ref{completiond} would yield a saturated $\mathscr{H}$-preaction $\beta_2$ that extends $\alpha_1$ and whose $\mathscr{H}$-graph $\mathcal{G}'$ contains $\mathcal{G}_1$, such that $\mathcal{G}'/\mathcal{G}_1$ contains the single cycle $\mathcal{L}$. Those two preactions $\beta_1$ and $\beta_2$ will extend $\alpha_1$. Moreover, they will be non-isomorphic because the underlying graphs of their $\mathscr{H}$-graphs do not share the same homotopy type. 
	
	The proofs of these are very similar to the ones given in the case of GBS groups of rank $1$ in \cite{bontemps} (Lemma 3.3, Lemma 3.4 and Lemma 3.5). For the convenience of the reader, we adapt the main ingredients of the proofs to our wider setting.
	
	\begin{lemma}\label{completiond}
		Let $(\alpha_i)_{i \in \mathbb{N}}$ be a collection of $\mathscr{H}$-preactions whose $\mathscr{H}$-graphs $(\mathcal{G}_i)_{i \in \mathbb{N}}$ are contained in a saturated $\mathscr{H}$-graph $\mathcal{G}$ as pairwise disjoint subgraphs such that the quotient $\mathcal{G} / \left(\bigsqcup_{i \in \mathbb{N}} \mathcal{G}_i\right)$ is a tree. There exists a $G$-action $\alpha$ whose $\mathscr{H}$-graph is $\mathcal{G}$ and that extends $\alpha_i$ for every $i \in \mathbb{N}$.
	\end{lemma}
	
		\begin{lemma}\label{fini}
		Let $\mathcal{F}$ be a finite $\mathscr{H}$-graph. There exists an $\mathscr{H}$-preaction whose $\mathscr{H}$-graph is $\mathcal{F}$.
	\end{lemma}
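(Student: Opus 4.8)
The plan is to reconstruct a $\mathscr{H}$-preaction from the combinatorial data carried by the finite $\mathscr{H}$-graph $\mathcal{F}$, essentially reversing the construction that associates a $\mathscr{H}$-graph to a preaction. First I would build the underlying set $X$ of the preaction. For each vertex $\bar{x}$ of $\mathcal{F}$ labeled $([\Lambda]_{G_v}, v)$, I put into $X$ a copy of the coset space $\Lambda \backslash G_v$ — this is the orbit that will be ``shrunk'' back to the vertex $\bar{x}$. I declare $D_v$ to be the union of all such coset spaces coming from vertices labeled by $v$, and I let $\alpha_v : G_v \to \Sym(D_v)$ act by right translation on each block $\Lambda \backslash G_v$ separately. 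By construction every $\alpha_v$ is a genuine (non-transitive) right $G_v$-action, and the stabilizer of the coset $\Lambda 1$ inside $G_v$ is exactly $\Lambda$, so the vertex labels will come out right.

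Next I would define the partial bijections. For an edge $E$ of $\mathcal{F}$ labeled $e$, with inferior half-edge $\Lambda_0 g_0 \iota_{e,\src}(G_e)$ at the source vertex $([\Lambda_0]_{G_{\src(e)}}, \src(e))$ and superior half-edge $\Lambda_1 g_1 \iota_{e,\trg}(G_e)$ at the target vertex $([\Lambda_1]_{G_{\trg(e)}}, \trg(e))$: if $e \in \mathcal{E}(\mathscr{T})$ the edge records that the two orbits $\Lambda_0 \backslash G_{\src(e)}$ and $\Lambda_1 \backslash G_{\trg(e)}$ should be glued along an $\iota_{e,\src}(G_e) = \iota_{\overline{e},\trg}(G_e)$-suborbit, so I identify the points $\Lambda_0 g_0 \iota_{e,\src}(g)$ and $\Lambda_1 g_1 \iota_{e,\trg}(g)$ for $g$ ranging over $G_e$ — the half-edge compatibility condition $[\iota_{e,\src}^{-1}(g_0^{-1}\Lambda_0 g_0)]_{G_e} = [\iota_{e,\trg}^{-1}(g_1^{-1}\Lambda_1 g_1)]_{G_e}$ is precisely what guarantees that this identification is well-defined and equivariant (after adjusting $g_1$ by the conjugating element witnessing the equality of conjugacy classes). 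If $e \notin \mathcal{E}(\mathscr{T})$ I instead define $\beta_e$ on the $\iota_{e,\src}(G_e)$-stable subset $\{\Lambda_0 g_0 \iota_{e,\src}(g) : g \in G_e\}$ by $\beta_e(\Lambda_0 g_0 \iota_{e,\src}(g)) = \Lambda_1 g_1 \iota_{e,\trg}(g)$, again well-defined and $G_e$-equivariant for the same reason. The last clause of the definition of a $\mathscr{H}$-graph (distinct half-edges at a common source/target carry distinct labels) ensures that along each vertex orbit the various edges use pairwise disjoint $\iota_{e,\src}(G_e)$-suborbits as their domains, so these gluings and partial bijections do not collide and $\beta_e$ really is a partial bijection.

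Then I would verify the five axioms of a $\mathscr{H}$-preaction. The $G_v$-action axioms and the $\src/\trg$-compatibility of the $\beta_e$'s are immediate from the equivariance just established; the commutation relations between $\alpha_{\src(e)}$, $\alpha_{\trg(e)}$ (for tree edges) and between $\alpha_{\src(e)}, \beta_e, \alpha_{\trg(e)}$ (for non-tree edges) follow because the identifications/partial bijections were defined to intertwine the relevant $G_e$-actions. The three nesting conditions ($D_v \cap D_w \subseteq D_u$ for $u$ on the tree path, and the analogous statement for $D_v \cap \dom(\beta_e)$) hold trivially here: since $\mathcal{F}$ is finite I can, if necessary, take the $X$ produced above and note that a point of $X$ lives in exactly one block $\Lambda\backslash G_v$, hence in only one $D_v$, so the intersections $D_v \cap D_w$ and $D_v \cap \dom(\beta_e)$ across distinct vertices are empty and the inclusions are vacuous — unless a tree edge forces a gluing, in which case the glued points are added to all the $D_u$ along the tree path, which is a finite bookkeeping step. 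Finally I would check that the $\mathscr{H}$-graph of the preaction just constructed is isomorphic, as a labeled graph, to $\mathcal{F}$: the $G_v$-orbits of $X$ are by design in bijection with $\mathcal{V}(\mathcal{F})$ with matching labels, and the edge sets and half-edge labels match by the defining formulas for $\mathcal{E}^{\pm}$.

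The main obstacle I anticipate is the bookkeeping in the non-tree-edge case and the nesting axioms: one must be careful that when several edges of $\mathcal{F}$ meet at the same vertex (and the $\mathscr{H}$-graph need not be saturated, so a vertex orbit $\Lambda\backslash G_v$ may meet only part of the relevant double-coset space), the domains $\dom(\beta_e)$ chosen for the different non-tree edges are genuinely disjoint $\iota_{e,\src}(G_e)$-stable subsets, and that gluings forced by tree edges are propagated consistently along tree paths without creating cycles of forced identifications that would collapse an orbit. Making the construction of $X$ precise — probably by first taking the disjoint union of all coset blocks and then quotienting by the equivalence relation generated by the tree-edge gluings, and checking this quotient still restricts to a free-enough $G_v$-action with the prescribed point stabilizers — is where the real work lies; everything else is a direct translation of the definitions. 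This is exactly the argument carried out for rank-$1$ GBS groups in \cite{bontemps}, Lemma 3.3, and the only new point is that the vertex groups are now $\mathbb{Z}^d$ (indeed arbitrary groups at this level of generality), so labels are $G_v$-conjugacy classes of subgroups rather than just finite-index subgroups of $\mathbb{Z}$; the half-edge compatibility lemma proved just above is precisely what makes the gluings well-defined in this non-commutative generality.
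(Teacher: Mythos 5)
Your proposal is correct and takes essentially the same route as the paper: the paper proves the lemma by a straightforward induction on the edges of $\mathcal{F}$, starting from the disjoint union of the coset spaces $\Lambda \backslash G_v$ attached to the vertices, where each inductive step is exactly the gluing you describe --- Construction B for a tree edge (identifying the two $G_e$-suborbits via the conjugating element $h$ witnessing the half-edge compatibility condition) and Construction A for a non-tree edge (extending the partial bijection $t_e$ between them). The only difference is organizational: performing the gluings one edge at a time, always along a fresh suborbit (fresh by the injectivity condition on half-edge labels), sidesteps the analysis of the globally generated equivalence relation that you rightly flag as the remaining bookkeeping in your all-at-once construction.
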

	The proofs of Lemma \ref{completiond} and Lemma \ref{fini} rely on straightforward inductions based on the following proposition:
	
	\begin{proposition} Let $\alpha_0$ be an $\mathscr{H}$-preaction defined on a countable set $X_0$ whose $\mathscr{H}$-graph $\mathcal{G}_0$ is contained in an $\mathscr{H}$-graph $\mathcal{G}$ such that: \begin{itemize}
			\item $\mathcal{V}(\mathcal{G}) = \mathcal{V}(\mathcal{G}_0)$;
			\item $\mathcal{E}(\mathcal{G}) = \mathcal{E}(\mathcal{G}_0) \sqcup \{E\}$ for some edge $E$ such that $\src(E), \trg(E) \in \mathcal{V}(\mathcal{G}_0)$.
		\end{itemize}
		Then, there exists an $\mathscr{H}$-preaction $\alpha$ whose $\mathscr{H}$-graph is $\mathcal{G}$. Moreover, if $\src(E)$ and $\trg(E)$ belong to two different connected components of $\mathcal{G}_0$ (that is to say, $\alpha_0 = \alpha_1 \sqcup \alpha_2$ for some sub-$\mathscr{H}$-preactionss $\alpha_1$ and $\alpha_2$, and $\src(E)$ (\textit{resp.} $\trg(E)$) corresponds to a vertex orbit for $\alpha_1$ (\textit{resp.} $\alpha_2$)), the constructed preaction $\alpha$ extends both $\alpha_1$ and $\alpha_2$.
	\end{proposition}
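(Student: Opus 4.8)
The plan is to distinguish two cases according to whether the label $e\in\mathcal{E}(\mathscr{H})$ of the new edge $E$ belongs to the spanning tree $\mathscr{T}$ or not; only the tree case forces passing to a quotient of $X_0$, and it is the main obstacle. To set up, write $v=\src(e)$, $w=\trg(e)$ and read off from $\mathcal{G}$ the data attached to $E$: $\src(E)$ carries a label $([\Lambda_0]_{G_v},v)$, $\trg(E)$ a label $([\Lambda_1]_{G_w},w)$, the inferior half-edge of $E$ a label $\Lambda_0 g_0\iota_{e,\src}(G_e)$ and the superior one a label $\Lambda_1 g_1\iota_{e,\trg}(G_e)$, and the last axiom defining a $\mathscr{H}$-graph furnishes $h\in G_e$ with $hK_0h^{-1}=K_1$, where $K_0=\iota_{e,\src}^{-1}(g_0^{-1}\Lambda_0 g_0)$ and $K_1=\iota_{e,\trg}^{-1}(g_1^{-1}\Lambda_1 g_1)$. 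Since $\mathcal{G}_0$ is the $\mathscr{H}$-graph of $\alpha_0$, the vertex $\src(E)$ is an honest $G_v$-orbit $O_0\subseteq D_v$; I would pick the point $p\in O_0$ representing the inferior half-edge, so that $\Stab_{G_v}(p)=g_0^{-1}\Lambda_0 g_0$, and likewise $q\in O_1\subseteq D_w$ (the $G_w$-orbit $\trg(E)$) with $\Stab_{G_w}(q)=g_1^{-1}\Lambda_1 g_1$. Because $E\notin\mathcal{E}(\mathcal{G}_0)$, the $\iota_{e,\src}(G_e)$-orbit $\omega_0:=p\cdot\iota_{e,\src}(G_e)$ does not yet carry the edge $E$ (for $e\notin\mathscr{T}$: $\omega_0\cap\dom(\beta_e)=\emptyset$ by $\iota_{e,\src}(G_e)$-stability of $\dom(\beta_e)$), and similarly for $\omega_1:=q\cdot\iota_{e,\trg}(G_e)$. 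Composing $\omega_0\cong K_0\backslash G_e$, the $G_e$-equivariant bijection $K_0\backslash G_e\cong K_1\backslash G_e$ induced by $Kg\mapsto hKh^{-1}\,hg$, and $K_1\backslash G_e\cong\omega_1$ produces a bijection $\phi\colon\omega_0\to\omega_1$ with $\phi(x\cdot\iota_{e,\src}(g))=\phi(x)\cdot\iota_{e,\trg}(g)$ for all $x\in\omega_0$, $g\in G_e$; this $\phi$ is the device gluing the two orbits in the prescribed way.

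If $e\notin\mathcal{E}(\mathscr{T})$, I would keep all of $\alpha_0$ and merely replace $\beta_e$ by $\beta_e\sqcup\phi$. This is legitimate since $\omega_0$ avoids $\dom(\beta_e)$ and $\omega_1$ avoids $\rng(\beta_e)$; the domain and range stay stable under the relevant edge groups (one full orbit is added to each), the commutation relation for $e$ on the new orbit is exactly the intertwining property of $\phi$, and the two tree-path inclusion conditions either do not involve $\beta_e$ or follow from the old ones, because the sets $D_\bullet$ are unchanged and $\omega_0\subseteq D_{\src(e)}$. The resulting preaction $\alpha$ extends $\alpha_0$, and its $\mathscr{H}$-graph is $\mathcal{G}_0$ together with exactly one extra edge labeled $e$ from $\src(E)$ to $\trg(E)$ bearing the prescribed half-edge labels, i.e. it is $\mathcal{G}$.

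If $e\in\mathcal{E}(\mathscr{T})$, a partial bijection is not permitted, so instead I would force $p$ into both $O_0$ and $O_1$ by forming the quotient $X:=X_0/{\sim}$, where $\sim$ is generated by $x\sim\phi(x)$ for $x\in\omega_0$. The crucial structural point is that, $e$ being a tree edge, the edge path $[\src(e),\trg(e)]_{\mathscr{T}}$ is reduced to $e$ and has no interior vertex. As $\omega_0$ and $\omega_1$ are disjoint, each $\sim$-class is a point or a pair and $X_0$ embeds piecewise into $X$; I would transport the vertex actions and domains to $X$, so the common image $\omega$ of $\omega_0$ and $\omega_1$ lands in $D_v\cap D_w$ with compatible $\iota_{e,\src}(G_e)$- and $\iota_{e,\trg}(G_e)$-actions, again by the intertwining property of $\phi$. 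The hard part is then verifying that $\alpha$ is a genuine $\mathscr{H}$-preaction — that the transported vertex actions are well defined along $\omega$, and that the inclusions $D_{v'}\cap D_{w'}\subseteq D_{u'}$ for $u'\in[v',w']_{\mathscr{T}}$ survive the quotient — and that nothing collapses beyond the intended identification (so that no two vertex orbits merge and the $\mathscr{H}$-graph of $\alpha$ is exactly $\mathcal{G}$, with the same vertex set). Both points reduce to the corresponding facts for $\alpha_0$ read at the two endpoints $v$ and $w$, which is precisely what is available, since any tree path meeting the new intersection $\omega$ passes through both $v$ and $w$ because $e$ has no interior vertex.

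Finally, for the ``moreover'', write $\alpha_0=\alpha_1\sqcup\alpha_2$ with $\src(E)$ a vertex orbit of $\alpha_1$ and $\trg(E)$ one of $\alpha_2$, so $\omega_0$ lives in the underlying set of $\alpha_1$ and $\omega_1$ in that of $\alpha_2$. In the non-tree case the modification only enlarges $\beta_e$ by the single orbit $\omega_0$ and leaves both $\alpha_1$ and $\alpha_2$ otherwise untouched; in the tree case the quotient only glues the orbit $\omega_0$ of $\alpha_1$ to the orbit $\omega_1$ of $\alpha_2$, and the underlying sets of $\alpha_1$ and $\alpha_2$ both embed into $X$. In either case $\alpha$ restricts to $\alpha_1$ on one piece and to $\alpha_2$ on the other, hence it extends both, as required.
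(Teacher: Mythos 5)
Your proposal is correct and follows essentially the same route as the paper's proof: the same dichotomy according to whether the label $e$ of $E$ lies in the spanning tree $\mathscr{T}$, with the non-tree case handled by enlarging the partial bijection $\beta_e$ on a fresh $\iota_{e,\src}(G_e)$-orbit (the paper's Construction A) and the tree case by quotienting $X_0$ along an identification of the two edge-group orbits (Construction B), both driven by the conjugator $h\in G_e$ supplied by the half-edge-label compatibility. Your explicit $G_e$-equivariant bijection $\phi$ induced by $K_0g\mapsto K_1hg$ is just a repackaging of the paper's pointwise gluing formula, so the two arguments coincide in substance.
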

	
	\begin{proof}
		We adapt Constructions $\rA$ and $\B$ defined in \cite[Section 3]{bontemps}. Let $e$ be the label of $E$ and let us denote by $(V_1, V_2) := (\src(E), \trg(E))$, and by $(v_1,v_2) := (\src(e), \trg(e))$. Let $([\Lambda_i]_{G_{v_i}}, v_i)$ be the label of $V_i$ (for $i \in \{1,2\}$) and let $\Lambda_1 g_1 \iota_{e, \src}(G_e)$ and $\Lambda_2 g_2 \iota_{e, \trg}(G_e)$ be the label of the inferior half-edge and of the superior half-edge of $E$, respectively. One has $[\iota_{e, \src}^{-1}(g_1^{-1}\Lambda_1 g_1)]_{G_e} = [\iota_{e, \trg}^{-1}(g_2^{-1}\Lambda_2 g_2)]_{G_e}$, \textit{i.e.} there exists $h \in G_e$ such that \begin{equation}\label{tr}\iota_{e, \src}^{-1}(g_1^{-1}\Lambda_1 g_1) = h^{-1}\iota_{e, \trg}^{-1}(g_2^{-1}\Lambda_2 g_2) h.\end{equation}
		We distinguish two cases:
		
		\paragraph{Construction A}: If $e \notin \mathcal{E}(\mathscr{T})$, then there exist $x_1,x_2 \in X_0$ such that: \begin{itemize}
			\item $x_i \in \dom(G_{v_i})$ for $i \in \{1,2\}$;
			\item $\Stab_{G_{v_i}}(x_i) = \Lambda_i$ for $i \in \{1,2\}$;
			\item $x_1 \cdot g_1 \notin \dom(t_e)$ and $x_2 \cdot g_2 \notin \rng(t_e)$.
		\end{itemize}
		We extend $t_e$ and $t_e^{-1}$ on a subset of $X:=X_0$ by defining, for every $g \in G_e$: \[\left(x_1 \cdot g_1(\iota_{e, \src}(g))\right) \cdot t_e = x_2 \cdot g_2 \iota_{e, \trg}(hgh^{-1}),\] which is well-defined by Equation \ref{tr}. The resulting $\mathscr{H}$-preaction is suitable.
		
		\paragraph{Construction B}:  If $e \in \mathcal{E}(\mathscr{T})$, then there exist $x_1,x_2 \in X_0$ such that: \begin{itemize}
			\item $x_i \in \dom(G_{v_i})$ for $i \in \{1,2\}$;
			\item $\Stab_{G_{v_i}}(x_i) = \Lambda_i$ for $i \in \{1,2\}$;
			\item $x_1 \cdot g_1 \notin \dom(G_{v_2})$ and $x_2 \cdot g_2 \notin \dom(G_{v_1})$.
		\end{itemize}
		We let \[X := X_0 / \left(x_1 \cdot g_1 \iota_{e, \src}(g) \sim x_2 \cdot g_2 \iota_{e, \trg}(hgh^{-1}) \ \forall g \in G_e\right)\] and we let $\alpha$ be the $\mathscr{H}$-preaction induced by $\alpha$ on $X$. As previously, it is well-defined by Equation \ref{tr}. By construction, the $\mathscr{H}$-graph of $\alpha$ is $\mathcal{G}$.  
	\end{proof}
	
	Finally we explain how to saturate an $\mathscr{H}$-graph in our new setting.
	
	\begin{lemma}\label{forest}
		For every (non-saturated) $\mathscr{H}$-graph $\mathcal{G}$, there exists a saturated $\mathscr{H}$-graph $\tilde{\mathcal{G}}$ that contains $\mathcal{G}$ and such that the quotient $\tilde{\mathcal{G}} / \mathcal{G}$ is an infinite forest. 
	\end{lemma}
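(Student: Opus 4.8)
The plan is to saturate the $\mathscr{H}$-graph vertex-type by vertex-type, edge-type by edge-type, one missing outgoing edge at a time, always attaching each new edge to a freshly created copy of a saturated "local model", so that no new cycles are created. Recall that $\mathcal{G}$ is saturated precisely when, for every vertex $V$ labeled $([\Lambda]_{G_v},v)$ and every edge $e\in\mathcal{E}(\mathscr{H})$ with $\src(e)=v$, the number of outgoing $e$-labeled edges at $V$ equals $|\Lambda\backslash G_v/\iota_{e,\src}(G_e)|$. So for each such $V$ and each $e$ there is a (possibly infinite) set $\mathcal{M}(V,e)$ of cosets $\Lambda g\iota_{e,\src}(G_e)$ that are \emph{not} yet realized as the inferior half-edge label of an outgoing $e$-edge at $V$; symmetrically there is a deficiency at targets for $\overline e$. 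Enumerate all these "missing half-edges" over all vertices of $\mathcal{G}$ and all edge-types; since $\mathcal{G}$ and each $G_v$ are countable, this is a countable list.

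First I would fix, for every $G_{\src(e)}$-conjugacy class that can occur, a reference saturated $\mathscr{H}$-graph: concretely, for a given conjugacy class $[\Theta]_{G_e}$ of a subgroup of an edge group $G_e$, take the $\mathscr{H}$-graph of the subgroup $\Theta \le G_e \le G$ (viewing $G_e$ inside $G$ via $\iota_{e,\src}$, or rather via the natural embedding into a chosen vertex group), which by Remark \ref{eqbsh} is $\Theta\backslash\mathcal{T}$ — a saturated, connected $\mathscr{H}$-graph. Call it $\mathcal{G}_{[\Theta]}$. Now process the list of missing half-edges one at a time. At step $n$, let the current half-edge be an inferior half-edge $\Lambda g\iota_{e,\src}(G_e)$ missing at a vertex $V$ with label $([\Lambda]_{G_v},v)$. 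Set $\Theta := \iota_{e,\src}^{-1}(g^{-1}\Lambda g)\le G_e$. Take a fresh disjoint copy of $\mathcal{G}_{[\Theta]}$, pick inside it a vertex $W$ with the right label $([\Lambda']_{G_{\trg(e)}},\trg(e))$ and a superior half-edge label compatible with the required relation $[\iota_{e,\src}^{-1}(g^{-1}\Lambda g)]_{G_e}=[\iota_{e,\trg}^{-1}(g_1^{-1}\Lambda' g_1)]_{G_e}$ — this vertex and half-edge exist precisely because $\mathcal{G}_{[\Theta]}$ is the $\mathscr{H}$-graph of $\Theta\le G$, hence $\Theta$ sits, up to $G_e$-conjugacy, inside some $G_w$-orbit, and the half-edge labels of that model realize all the conjugacy-class data that the $\mathscr{H}$-graph axioms demand. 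Add one new edge labeled $e$ from $V$ to $W$ carrying these two half-edge labels. Because $W$ lies in a fresh connected component, the added edge is a bridge; therefore in the quotient $\tilde{\mathcal{G}}/\mathcal{G}$ (which collapses $\mathcal{G}$ to a point) it contributes a new edge whose far endpoint is a new tree component coming from $\mathcal{G}_{[\Theta]}/(\mathcal{G}_{[\Theta]}\cap\text{nothing})$ — but since $\mathcal{G}_{[\Theta]}$ is itself in general not a tree, I instead argue directly (see below) that no cycle through $\mathcal{G}$ is created and that each bounded region of $\tilde{\mathcal{G}}$ meets $\mathcal{G}$.

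The key verification is twofold. First, \textbf{correctness}: after running through the whole (countable) list — noting that fresh copies added at step $n$ themselves introduce new missing half-edges, so the enumeration must be organized as a fair diagonal enumeration over the ever-growing graph, exactly as in the rank-$1$ case \cite{bontemps}[Lemma 3.5] — the limit graph $\tilde{\mathcal{G}}$ has, at every vertex and for every edge type, exactly the prescribed number of outgoing edges with \emph{distinct} inferior half-edge labels (distinctness holds because at each step we fill a genuinely missing coset, and we never fill the same one twice), so $\tilde{\mathcal{G}}$ is saturated; and all the $\mathscr{H}$-graph axioms are preserved at each step by the choice of $\Theta$ and of the compatible half-edge in the model. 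Second, \textbf{the forest condition}: I would prove by induction on $n$ that at each stage the newly added edge is a bridge whose removal disconnects the new fresh copy from everything built so far, hence every cycle of $\tilde{\mathcal{G}}$ is already a cycle of some single copy $\mathcal{G}_{[\Theta]}$ or of the original $\mathcal{G}$; contracting $\mathcal{G}$ and each copy $\mathcal{G}_{[\Theta]}$ to points then shows $\tilde{\mathcal{G}}/\mathcal{G}$ is a graph all of whose cycles are trivial, i.e. a forest — and it is infinite because the list of missing half-edges is infinite (any non-saturated vertex already forces infinitely many completions once we keep saturating the fresh copies). Strictly, to get a \emph{forest} rather than just "$\tilde{\mathcal{G}}$ obtained by tree-like attachments", one should attach, instead of the full model $\mathcal{G}_{[\Theta]}$, a \emph{spanning tree} of a saturating completion built inductively — but the cleanest route is: the \emph{quotient} $\tilde{\mathcal{G}}/\mathcal{G}$ is what must be a forest, and since every added piece is glued along a single new edge to a vertex of $\mathcal{G}$ or of a previously-added piece, the quotient is visibly a tree grown by adding one edge and one vertex at a time, hence a forest; infinitude is clear.

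The main obstacle I expect is purely bookkeeping: ensuring the enumeration of missing half-edges is \emph{fair} so that the limit graph is genuinely saturated (every deficiency, including those created by pieces added at late stages, is eventually repaired), while simultaneously keeping every attachment a bridge so the quotient stays a forest. This is exactly the technical content already handled in \cite{bontemps}[Lemma 3.5] for rank $1$, and the higher-rank case changes nothing structurally: the only new feature is that the "local model" $\mathcal{G}_{[\Theta]}$ for an edge-group-conjugacy class $[\Theta]$ is now the Bass–Serre quotient $\Theta\backslash\mathcal{T}$ of a possibly-nonabelian-looking but still perfectly explicit subgroup, and Remark \ref{eqbsh} guarantees it has all the vertices and half-edge labels we need to match the compatibility relation. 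I would therefore present the proof as: (i) reduce to attaching one missing half-edge; (ii) describe the fresh-copy attachment and check the five $\mathscr{H}$-graph axioms plus the bridge property; (iii) invoke a fair diagonal enumeration to iterate; (iv) conclude saturation and that $\tilde{\mathcal{G}}/\mathcal{G}$ is an infinite forest.
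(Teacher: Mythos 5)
Your overall plan (enumerate the missing inferior half-edges fairly and repair them one at a time by bridge attachments) is the right spirit, but the specific device you use — attaching, for each missing half-edge at $V$, a fresh copy of the \emph{already saturated} model $\mathcal{G}_{[\Theta]}=\Theta\backslash\mathcal{T}$ and joining $V$ to one of its vertices $W$ by a new edge — does not work, for two reasons. First, since $W$ is a vertex of a saturated $\mathscr{H}$-graph, all cosets in $\Lambda'\backslash G_{\trg(e)}/\iota_{e,\trg}(G_e)$ are already realized as superior half-edge labels of edges ending at $W$; adding one more incoming $e$-edge at $W$ either duplicates an existing superior half-edge label (forbidden by the $\mathscr{H}$-graph axiom that half-edges of distinct edges with the same target cannot share a label) or pushes the number of incident $e$-edges beyond the allowed count, so the resulting labeled graph is no longer an $\mathscr{H}$-graph, let alone a saturated one. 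Second, even ignoring this, the lemma asks that $\tilde{\mathcal{G}}/\mathcal{G}$ — with \emph{only} $\mathcal{G}$ collapsed — be a forest, and the attached models $\Theta\backslash\mathcal{T}$ are in general not trees; your two attempted fixes do not repair this: contracting each copy $\mathcal{G}_{[\Theta]}$ as well proves a different (weaker) statement, and replacing a copy by a spanning tree destroys its saturation, so you are back to the original problem of completing a non-saturated graph without creating cycles, i.e.\ the argument becomes circular. (This forest property is not cosmetic: it is exactly what Lemma \ref{completion} and the homotopy-type argument in Theorem \ref{kernel} need.)

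The repair is to attach much less at each step: if $V$ labeled $([\Lambda_0]_{G_v},v)$ is missing an outgoing $e$-edge with inferior half-edge label $\Lambda_0 g_0\iota_{e,\src}(G_e)$, add a single \emph{new} vertex $W$ labeled $([\Lambda_1]_{G_{\trg(e)}},\trg(e))$ with $\Lambda_1=\iota_{e,\trg}\left(\iota_{e,\src}^{-1}(g_0^{-1}\Lambda_0 g_0)\right)$, and a single new edge from $V$ to $W$ with inferior label $\Lambda_0 g_0\iota_{e,\src}(G_e)$ and superior label $\Lambda_1\iota_{e,\trg}(G_e)$; this satisfies the compatibility condition on half-edge labels by construction, and $W$, having only this one incident edge, is itself non-saturated and will be handled at a later stage of your fair enumeration. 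Iterating this (which is the paper's proof) makes every added vertex a leaf at the moment of its creation, so the complement of $\mathcal{G}$ in the limit graph is a union of trees grown one edge at a time, and $\tilde{\mathcal{G}}/\mathcal{G}$ is an infinite forest.
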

	
	\begin{proof}
		We build an increasing sequence of $\mathscr{H}$-graphs $(\mathcal{G}_n)_{n \mathbb{N}}$ such that the union $\bigcup_{n \in \mathbb{N}}\mathcal{G}_n$ is the desired $\mathscr{H}$-graph. This works by induction using the following construction: if $V$ is a non-saturated vertex labeled $([\Lambda_0]_{G_v}, v)$, whose non-saturation is witnessed by \begin{itemize}
			\item an edge $e \in \mathcal{E}(\mathscr{H})$ with source $v$;
			\item an element $g_0 \in G_v$ such that there is no inferior half-edge labeled $\Lambda_0 g_0 \iota_{e, \src}(G_e)$ with source $V$
		\end{itemize}
		then, denoting by $w = \trg(e)$, one defines \[\Lambda_1 = \iota_{e, \trg}(\iota_{e, \src}^{-1}(g_0^{-1}\Lambda_0 g_0))\] and one builds a new vertex $W$ labeled $([\Lambda_1]_{G_w}, w)$ and a new edge $E$ labeled $e$ with source $V$ and target $W$ such that \begin{itemize}
			\item the inferior half-edge $(E, \src)$ is labeled $\Lambda_0 g_0 \iota_{e, \src}(G_e)$;
			\item the superior half-edge $(E, \trg)$ is labeled $\Lambda_1 \iota_{e, \trg}(G_e)$.
		\end{itemize}
	\end{proof}
	
	\subsection{Generalized Baumslag-Solitar groups}\label{gbsd}

In this section, we will apply the tools we introduced in the previous section to GBS groups.
	
	A GBS group of rank $d$ is the fundamental group of a finite graph of groups whose vertex and edge stabilizers are isomorphic to $\mathbb{Z}^d$. As any injective homomorphism $\mathbb{Z}^d \to \mathbb{Z}^d$ is represented by an element of $\M_d(\mathbb{Z}) \cap \GL_d(\mathbb{Q})$, a GBS group can be represented by an oriented graph $\mathscr{H}$ endowed with a function which associates an integer matrix whose determinant is non-zero to each half-edge: \[\begin{array}{ccccc}
		\M & : & \mathcal{E}\left(\mathscr{H}\right) \times \{\src, \trg\} & \to & \M_d(\mathbb{Z}) \cap \GL_d(\mathbb{Q}) \\
		& & (e, \var) & \mapsto & \M_{e, \var} \\
	\end{array}\]
	and that satisfies $\M_{\overline{e}, \trg} = \M_{e, \src}$ for every $e \in \mathcal{E}(\mathscr{H})$.
	Up to shrinking some edges, we can assume that the graph $\mathscr{H}$ is \textbf{reduced}, that is to say, the only edges $e \in \mathcal{V}(\mathscr{H})$ one of whose labels is in $\GL_d(\mathbb{Z})$ are loops. 
	
	Given a GBS group $G$ defined by a graph of groups $\mathscr{H}$, let us denote by $\mathcal{T}$ its Bass-Serre tree and let us define the \textbf{modular homomorphism} as follows (\textit{cf.} \cite{modular}): fix a vertex $v \in \mathcal{V}(\mathcal{T})$. Observe that for any $g \in G$, the group $G_{gv} \cap G_v$ has finite index in $G_v$, hence $g$ belongs to the abstract commensurator of $G_v$. Define $\Delta_{G}^{(v)}(g)$ as the equivalence class of the homomorphism $\begin{array}{ccccc}
		G_v \cap G_{g^{-1}v} & \to & G_v \cap G_{gv} \\
		h & \mapsto & ghg^{-1} \\
	\end{array}$. As $G_v$ is isomorphic to $\mathbb{Z}^d$, the homomorphism $\Delta_{G}^{(v)}$ can be identified with a homomorphism $\Delta_{G}^{(v)} : G \to \GL_d(\mathbb{Q})$. The definition of the modular homomorphism does not depend on the choice of the vertex $v$ up to conjugation by an element of $\GL_d(\mathbb{Q})$. Practically, the image of the modular homomorphism (based at some vertex $v$) is generated by the matrices $\rA_1\B_1^{-1} \cdots \rA_n \B_n^{-1}$ for every edge path $e_1,...,e_n$ labeled $(\rA_1,\B_1)$, ... ,$(\rA_n,\B_n)$ and based at $v$. In particular, the modular homomorphism of a GBS group defined by a tree of groups is trivial. A GBS group $G$ of rank $d$ is \textbf{unimodular} if $\image(\det \circ \Delta_G) \subseteq \{1,-1\}$.

    \begin{example}

Let us consider the GBS group of rank $2$ defined in Figure \ref{exgbs}.

\begin{figure}[ht]
\center
\begin{tikzpicture}
    \node[draw,circle,fill=pink] (a) at (0,0) {$v$};
     \node[draw,fill=cyan] (b) at (7,0) {$w$};
     \draw[>=latex, directed] (a) to[bend left=20] node[above, very near start]{$\begin{pmatrix}
  1 & 5\\ 
  -2 & 0
\end{pmatrix}$} node[above, very near end]{$\begin{pmatrix}
  -1 & 8\\ 
  4 & 5
\end{pmatrix}$} node[above]{$e$} (b);
    \draw[>=latex, directed, dashed, color=red] (a) to[bend left=-20] node[below, very near start]{$\begin{pmatrix}
  7 & -1\\ 
  -3 & -3
\end{pmatrix}$} node[below, very near end]{$\begin{pmatrix}
  -4 & 1\\ 
  3 & 9
\end{pmatrix}$} node[below]{$f$} (b);
\end{tikzpicture}
\caption{A graph of GBS group}
\label{exgbs}
\end{figure}
Let us choose the edge $f$ as a spanning tree in the graph represented in Figure \ref{exgbs}. Then, the presentation of $G$ associated to this choice is the following: \begin{align*}
    G \simeq &\left\langle x_v,y_v,x_w,y_w,t_e \mid [x_v,y_v]=[x_w,y_w]=1, \right.\\
    & x_v^7y_v^{-3} = x_w^{-4}y_w^3, x_v^{-1}y_v^{-3} = x_wy_w^{9},  \\
    &\left. t_e^{-1} (x_vy_v^{-2})t_e=x_w^{-1}y_w^4, t_e^{-1}x_v^5t_e = x_w^8y_w^5 \right\rangle.
\end{align*}
The modular homomorphism is trivial on the vertex generators $x_v,y_v,x_w,y_w$ and sends $t_e$ to the product $\begin{pmatrix}
  1 & 5\\ 
  -2 & 0
\end{pmatrix} \cdot \begin{pmatrix}
  -1 & 8\\ 
  4 & 5
\end{pmatrix}^{-1} \cdot \begin{pmatrix}
  -4 & 1\\ 
  3 & 9
\end{pmatrix} \cdot \begin{pmatrix}
  7 & -1\\ 
  -3 & -3
\end{pmatrix}^{-1}$: \[\Delta_G^{(v)} : \begin{array}{ccccc}
 & G   &\to  & \GL_2(\mathbb{Q}) \\
&  x_v,y_v,x_w,y_w & \mapsto & I_2 \\
& t_e & \mapsto & \frac{1}{296} \begin{pmatrix}
  -153 & -301\\ 
  46 & 342
\end{pmatrix}
\end{array}\]
        
    \end{example}
	
	In the case where the vertex stabilizers are abelian, the definition of an $\mathscr{H}$-graph simplifies as follows. An $\mathscr{H}$-graph $\mathcal{G}$ is then a labeled graph satisfying the following conditions:
	\begin{enumerate}
		\item every vertex is labeled $(\Lambda, v)$ for some $v \in \mathcal{V}(\mathscr{H})$ and some subgroup $\Lambda$ of $G_v$;
		\item every edge $E \in \mathcal{E}(\mathcal{G})$ is labeled $e$ for some $e \in \mathcal{E}(\mathscr{H})$ such that $\src(E)$ is labeled $(\Lambda_0, \src(e))$ (for some $\Lambda_0 \leq G_{\src(e)}$) and $\trg(E)$ is labeled $(\Lambda_1, \trg(e))$ (for some $\Lambda_1 \leq G_{\trg(e)}$);
		\item every edge labeled $e$ whose source (\textit{resp.} target) is labeled $(\Lambda_0, \src(e))$ (\textit{resp.} $(\Lambda_1, \trg(e))$) satisfies \[\iota_{e, \src}^{-1}\left(\Lambda_0\right) = \iota_{e, \trg}^{-1}\left(\Lambda_1\right);\]
		\item every vertex labeled $(\Lambda, v)$ has at most $\left|\Lambda \backslash G_v / \iota_{e, \src}(G_e)\right|$ outgoing edges labeled $e$ (with $\src(e) = v$) and at most $\left|\Lambda \backslash G_v / \iota_{e, \trg}(G_e)\right|$ incoming edges labeled $e$ (with $\trg(e) = v$).
	\end{enumerate}
	
	Hence for a GBS group of rank $d$ the definition of an $\mathscr{H}$-graph becomes the following: 
	
	\begin{definition}
		Let $d \geq 1$ and let $\mathscr{H}$ be a finite graph of groups all of whose vertex and edge groups are isomorphic to $\mathbb{Z}^d$. An $\mathscr{H}$-graph is a labeled graph that satisfies the three following conditions:
		\begin{enumerate}
			\item every vertex is labeled $(\Lambda, v)$ for some $v \in \mathcal{V}(\mathscr{H})$ and some subgroup $\Lambda$ of $\mathbb{Z}^d$;
			\item every edge $E \in \mathcal{E}(\mathcal{G})$ is labeled $e$ for some $e \in \mathcal{E}(\mathscr{H})$ such that $\src(E)$ is labeled $(\Lambda_0, \src(e))$ (for some $\Lambda_0 \leq G_{\src(e)}$) and $\trg(E)$ is labeled $(\Lambda_1, \trg(e))$ (for some $\Lambda_1 \leq G_{\trg(e)}$);
			\item \textbf{Transfer Equation}\label{transfer} every edge labeled $e$ whose source (\textit{resp.} target) is labeled $(\Lambda_0, \src(e))$ (\textit{resp.} $(\Lambda_1, \trg(e))$) satisfies \[\left(\M_{e, \src}^{-1}\Lambda_0\right) \cap \mathbb{Z}^d = \left(\M_{e, \trg}^{-1}\Lambda_1\right) \cap \mathbb{Z}^d;\]
			\item every vertex labeled $(\Lambda, v)$ has at most $\left|\mathbb{Z}^d / \langle \Lambda, \M_{e, \src}\mathbb{Z}^d \rangle\right|$ incident edges labeled $e$ (with $\src(e) = v$) and at most $\left|\mathbb{Z}^d / \langle \Lambda, \M_{e, \trg}\mathbb{Z}^d \rangle \right|$ incident edges labeled $e$ (with $\trg(e) = v$);
		\end{enumerate}
		It is saturated if and only if equality holds for every vertex and edge in the last item.
	\end{definition}
	
	\begin{definition}
	        An \textbf{$\mathscr{H}$-path} is an $\mathscr{H}$-graph whose underlying graph $E_1, \ldots, E_r$ is an edge path. It is said to be \textbf{reduced} if the underlying graph is reduced, namely there is no $i \in \llbracket 1, r-1 \rrbracket$ such that $E_i = \overline{E_{i+1}}$.
	\end{definition}
	
	\begin{example}
		Let $\rA = \begin{pmatrix}
			2 & 0 \\
			0 & 2
		\end{pmatrix}$ and $\B = \begin{pmatrix}
			1 & 1 \\
			1 & 4
		\end{pmatrix}$. Let us define the GBS group $G_0$ as the fundamental group $\pi_1(\mathscr{H}_0)$ of the following graph of groups defined in Figure \ref{exemple}.

		\begin{figure}[ht]
			\center
			\begin{tikzpicture}
				\node[draw,circle,fill=pink] (a) at (0,0) {};
				\node[draw,fill=orange] (b) at (3,0) {};
				\draw[>=latex, directed, color=blue] (a) to [out=135,in=45,looseness=20] node[very near start, below left]{$\rA$} node[very near end, below right]{$I_2$} (a);
				\draw[>=latex, directed, dashed, color=red] (a) to node[very near start, below left]{$\rA$} node[very near end, below right]{$\B$} (b);
			\end{tikzpicture}
			\caption{The graph of groups $\mathscr{H}_0$.}
			\label{exemple}
		\end{figure}
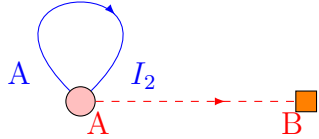

		The labeled graph represented in Figure \ref{exhgraph} is a (non-saturated) $\mathscr{H}_0$-graph.

		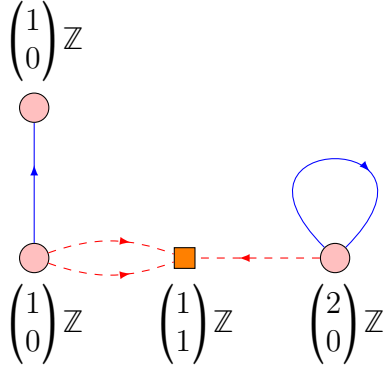
\begin{figure}[ht]
			\center
			\begin{tikzpicture}
				\node[draw,circle,fill=pink] (a) at (0,0) {};
				\draw (0.15, -0.2) node[below]{$\begin{pmatrix}
						1 \\
						0
					\end{pmatrix}\mathbb{Z}$};
				\node[draw,circle,fill=pink] (b) at (0,2) {};
				\draw (0.15,2.2) node[above]{$\begin{pmatrix}
						1 \\
						0
					\end{pmatrix}\mathbb{Z}$};
				\node[draw,fill=orange] (c) at (2,0) {};
				\draw (2.15,-0.2) node[below]{$\begin{pmatrix}
						1 \\
						1
					\end{pmatrix}\mathbb{Z}$};
				\node[draw, circle, fill=pink] (d) at (4,0) {};
				\draw (4.15,-0.2) node[below]{$\begin{pmatrix}
						2 \\
						0
					\end{pmatrix}\mathbb{Z}$};
				\draw[>=latex, directed, color=blue] (d) to [out=135,in=45,looseness=20] (d);
				\draw[>=latex, directed, dashed, color=red] (d) to (c);
				\draw[>=latex, directed, dashed, color=red] (a) to[bend left=20] (c);
				\draw[>=latex, directed, dashed, color=red] (a) to[bend left=-20] (c);
				\draw[>=latex, directed, color=blue] (a) to (b);
			\end{tikzpicture}
			\caption{An example of an $\mathscr{H}_0$-graph.}
			\label{exhgraph}
		\end{figure}

		We will focus on non-amenable GBS groups. They are characterized by the following proposition:
		
		\begin{proposition}\label{amenable}
			Let $G$ be a GBS group of rank $d$ defined by a reduced graph of groups $\mathscr{H}$. Then $G$ is amenable if and only if $\mathscr{H}$ is a single loop one of whose labels is in $\GL_d(\mathbb{Z})$, or a single edge $e$ with $\src(e) \neq \trg(e)$ both of whose labels have determinant $\pm 2$.
		\end{proposition}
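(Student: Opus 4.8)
The plan is to decide amenability from the action of $G$ on its Bass-Serre tree $\mathcal{T}$, by means of the classical trichotomy for actions on trees: either the action is \emph{elementary}, i.e.\ it fixes a vertex, fixes an end, or preserves a bi-infinite line (equivalently a pair of ends), or else it is of \emph{general type} and a ping-pong argument applied to two hyperbolic elements with transverse axes yields a non-abelian free subgroup. Since a group with a general type action on a tree is non-amenable, it will suffice to match the three elementary types with the stated normal forms of $\mathscr{H}$. I will assume $\mathscr{H}$ has at least one edge (so that $G$ is a genuine GBS group, the case $\mathscr{H}=$ a single vertex giving $G\cong\mathbb{Z}^d$), and I will repeatedly use two facts: by reducedness, any half-edge whose label lies in $GL_d(\mathbb{Z})$ belongs to a loop; and the valence of $\tilde v$ in $\mathcal{T}$ equals $\sum_{(e,\var)\colon\var(e)=v}|\det\M_{e,\var}|$, so that the stabilizer of an edge of $\mathcal{T}$ incident to $\tilde v$ equals $G_v$ exactly when the corresponding half-edge of $\mathscr{H}$ carries a label in $GL_d(\mathbb{Z})$.

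For the direction ``$\Leftarrow$'' I will simply exhibit amenability of the two listed graphs. If $\mathscr{H}$ is a single loop one of whose labels lies in $GL_d(\mathbb{Z})$, then, normalising that label to the identity by the change of generators $y=\A x$ inside presentation \eqref{pres}, $G$ becomes an ascending HNN extension $\langle\mathbb{Z}^d,t\mid t^{-1}yt=\A y,\ y\in\mathbb{Z}^d\rangle$ with $\A\in M_d(\mathbb{Z})\cap GL_d(\mathbb{Q})$; the normal closure of $\mathbb{Z}^d$ is the increasing union $\bigcup_{n\geq0}t^n\mathbb{Z}^d t^{-n}$, which is abelian, so $G$ is metabelian --- it is a group $\mathbb{Z}[\A,\A^{-1}](\mathbb{Z}^d)\rtimes\mathbb{Z}$ as in the introduction --- hence amenable. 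If $\mathscr{H}$ is a single non-loop edge both of whose labels have determinant $\pm2$, then $G=\mathbb{Z}^d*_{\mathbb{Z}^d}\mathbb{Z}^d$ with both edge subgroups of index $2$, so every vertex of $\mathcal{T}$ has valence $2$ and $\mathcal{T}$ is a line; an element fixing one edge of this line fixes its two endpoints, hence inductively every edge, so the edge group $\mathbb{Z}^d$ is exactly the kernel of the action of $G$ on $\mathcal{T}$ and the quotient is a cocompact group of automorphisms of a line, namely $D_\infty$. Thus $G$ is $\mathbb{Z}^d$-by-$D_\infty$, hence amenable.

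For ``$\Rightarrow$'' I prove the contrapositive: if $\mathscr{H}$ is reduced, has an edge, and is neither of the two graphs above, then $G$ is non-amenable. Collapsing every vertex group in \eqref{pres} gives an epimorphism onto the fundamental group of the underlying graph, i.e.\ $G\twoheadrightarrow\mathbb{F}_{b_1(\mathscr{H})}$; so if $b_1(\mathscr{H})\geq2$ we are done, and otherwise $\mathscr{H}$ is a tree or has a single cycle. In the remaining cases I claim the action of $G$ on $\mathcal{T}$ is of general type. It is not elliptic: $\mathscr{H}$ has an edge that is either a loop, whose stable letter is hyperbolic, or a non-loop edge, for which both associated subgroups are proper by reducedness, so that $ab$ is hyperbolic for suitable $a,b$ in the two vertex groups. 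It is not lineal: the Bass-Serre action of a reduced GBS graph of groups is minimal (if needed, this is checked by producing, for any edge leaving the minimal invariant subtree, a hyperbolic element whose axis crosses that edge, contradicting minimality of the subtree), so a lineal action would force $\mathcal{T}$ to be a line and hence $\sum_{(e,\var)\colon\var(e)=v}|\det\M_{e,\var}|=2$ at every vertex $v$; as each summand is $\geq1$ and every summand coming from a non-loop edge is $\geq2$, this leaves exactly ``single loop with both labels in $GL_d(\mathbb{Z})$'' or ``single non-loop edge with both labels of determinant $\pm2$'', both excluded.

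Finally, the action is not parabolic, and this is the main obstacle. If $G$ fixed an end $\xi$, then for every vertex $v$ the group $\Stab(\tilde v)=G_v$ would fix $\tilde v$ and $\xi$, hence fix the geodesic ray $[\tilde v,\xi)$ pointwise, hence fix its first edge $\tilde f$; since $\Stab(\tilde f)\subseteq\Stab(\tilde v)=G_v$, this gives $\Stab(\tilde f)=G_v$, which by the valence remark forces the label of $f=\pi(\tilde f)$ at $v$ to lie in $GL_d(\mathbb{Z})$, hence $f$ to be a loop at $v$. So under this hypothesis every vertex of $\mathscr{H}$ carries a loop; in particular $\mathscr{H}$ has no leaf, and combined with $b_1(\mathscr{H})\leq1$ this forces $\mathscr{H}$ to be a single loop --- a cycle of length $\geq2$ would contain a vertex incident only to non-loop half-edges, and a hanging tree would produce a leaf --- which by the previous sentence would have a label in $GL_d(\mathbb{Z})$, contradicting our assumption. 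Therefore the action is of general type, a ping-pong argument yields $\mathbb{F}_2\leq G$, and $G$ is non-amenable. The delicate points will be precisely this parabolic rigidity (a fixed end forces full vertex stabilizers on the first edges of rays, hence loops) together with the bookkeeping on $b_1(\mathscr{H})$, and, to a lesser extent, the minimality statement used in the lineal case.
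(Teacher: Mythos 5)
Your proof is correct and follows essentially the same route as the paper: the same two explicit amenable structures in the easy direction (the loop case as an abelian-by-cyclic group $\mathbb{Z}[\A,\A^{-1}](\mathbb{Z}^d)\rtimes\mathbb{Z}$, the determinant-$\pm 2$ edge case as $\mathbb{Z}^d$-by-$(\mathbb{Z}/2\mathbb{Z})*(\mathbb{Z}/2\mathbb{Z})$), and in the converse the fact that the Bass--Serre action is of general type, hence contains $\mathbb{F}_2$ by ping-pong. The only difference is that the paper asserts the general-type claim in one line, whereas you justify it by explicitly ruling out the elliptic, lineal and parabolic cases (using reducedness and the valence count $\sum|\det \M_{e,\var}|$); this is added detail rather than a different method.
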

		
		\begin{proof}
			Let us assume that $\mathscr{H}$ consists of a single loop $e$ labeled $(\rA, \B)$ with $\B \in \GL_d(\mathbb{Z})$. Denoting by $\M := \rA\B^{-1}$, the group $G$ is isomorphic to $\mathbb{Z}[\M, \M^{-1}]\mathbb{Z}^d \rtimes \mathbb{Z}$, where $\mathbb{Z}$ acts on $\mathbb{Z}[\M, \M^{-1}]\mathbb{Z}^d$ multiplication by $\M$. As an extension of an abelian group by an abelian group, $G$ is amenable.
			
			Now let us assume that $\mathscr{H}$ consists of an edge $e$ which is not a loop such that the labels $\rA$ and $\B$ of $e$ have determinant $\pm 2$. The Bass-Serre tree $\mathcal{T}$ of $\mathscr{H}$ is a bi-infinite line on which $G$ acts with kernel $N$ isomorphic to $\mathbb{Z}^d$. The action of $G/N$ on $\mathcal{T}$ has also a single orbit of edges, the stabilizer of any edge is trivial, and, as $\rA$ and $\B$ have determinant $\pm 2$, the stabilizer of any vertex is isomorphic to $\mathbb{Z}/2\mathbb{Z}$. Thus, Bass-Serre theory tells us that $G/N$ is isomorphic to $(\mathbb{Z}/2\mathbb{Z}) * (\mathbb{Z}/2\mathbb{Z})$, which is virtually isomorphic to $\mathbb{Z}$. Hence, $N$ and $G/N$ are amenable, so $G$ is amenable.
			
			Conversely, let us assume that $\mathscr{H}$ is neither a single loop one of whose labels is in $\GL_d(\mathbb{Z})$, nor a single edge $e$ with $\src(e) \neq \trg(e)$ both of whose labels have determinant $\pm 2$. Then the action of $G$ on its Bass-Serre tree $\mathcal{T}$ is of general type, thus $G$ contains a free group on two generators. In particular, $G$ is non-amenable. 
		\end{proof}

	\end{example}
	
	\section{\texorpdfstring{An equivalence relation on $\Sub(\mathbb{Z}^d)$}{An equivalence relation on $\Sub(\mathbb{Z}^d)$}}\label{eq}
	
	In this section, we assume that $G$ is a non-amenable GBS group which is \textit{not} the fundamental group of any graph of groups defined by a single vertex and a collection of loops labeled by invertible integer matrices. In other words, $G$ is neither amenable nor isomorphic to a semidirect product $\mathbb{Z}^d \rtimes \mathbb{F}_r$.
	
First, we introduce an equivalence relation that will give the decomposition of Theorem \ref{decomposition} (the fact that it is indeed an equivalence relation will be proved in Corollary~\ref{coreq}).	

\begin{definition}
		Let $\Lambda_0$ and $\Lambda_1$ be two subgroups of $\mathbb{Z}^d$. We say that $\Lambda_0$ and $\Lambda_1$ are $\mathscr{H}$-equivalent with respect to a vertex $v \in \mathcal{V}(\mathscr{H})$ (denoted w.r.t. $v$) if there exists a connected $\mathscr{H}$-graph that contains two vertices labeled $(\Lambda_0, v)$ and $(\Lambda_1, v)$, respectively. 
	\end{definition}
	
	\begin{lemma}\label{eqrel}
		Suppose that there exists an $\mathscr{H}$-path $E_1, ..., E_r$ that connects two vertices labeled $(\Lambda_0, v)$ and $(\Lambda_1, w)$, respectively. Then, for every edges $e,f \in \mathcal{E}(\mathscr{H})$ satisfying $\src(e) = v$ and $\trg(f) = w$, there exists an $\mathscr{H}$-path of type $e,...,e_1, ..., e_r, ..., f$ that connects two vertices labeled $(\Lambda_0, v)$ and $(\Lambda_1, w)$ and that contains $E_1, ..., E_r$ as a subpath.
	\end{lemma}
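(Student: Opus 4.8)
The plan is to leave the given path $E_1, \dots, E_r$ entirely untouched and to glue a short backtracking detour at each of its two endpoints, so that the resulting walk begins with an edge labelled $e$ and ends with an edge labelled $f$ while the two extremities keep the labels $(\Lambda_0, v)$ and $(\Lambda_1, w)$. Write $V_0 = \src(E_1)$ and $V_r = \trg(E_r)$; by hypothesis these are labelled $(\Lambda_0, v)$ and $(\Lambda_1, w)$ and lie in a connected $\mathscr{H}$-graph $\mathcal{G}$. Since $\src(e) = v$, an outgoing edge labelled $e$ at $V_0$ is governed by the degree bound $|\mathbb{Z}^d/\langle \Lambda_0, \M_{e,\src}\mathbb{Z}^d\rangle|$, which is a positive integer (possibly $+\infty$). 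Hence $V_0$ always admits at least one outgoing $e$-edge: if one is already present, say $E \colon V_0 \to A$, I keep it; otherwise the current number of outgoing $e$-edges at $V_0$ is $0$, strictly below the positive bound, so I attach a fresh vertex $A$ together with a fresh edge $E \colon V_0 \to A$, exactly as in the inductive step of Lemma \ref{forest}, labelling $A$ by $\bigl(\M_{e,\trg}((\M_{e,\src}^{-1}\Lambda_0)\cap\mathbb{Z}^d),\,\trg(e)\bigr)$ and its inferior half-edge by an unused double coset. With this choice the Transfer Equation holds at $E$, and the enlarged labelled graph is again a $\mathscr{H}$-graph. In either case $V_0 \xrightarrow{E} A \xrightarrow{\overline{E}} V_0$ is a walk of type $e,\overline{e}$ based at $V_0$.

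Symmetrically, since $\trg(f) = w$, an incoming edge labelled $f$ at $V_r$ is governed by the positive bound $|\mathbb{Z}^d/\langle \Lambda_1, \M_{f,\trg}\mathbb{Z}^d\rangle|$, so $V_r$ admits at least one such edge $F \colon B \to V_r$, either already present or freshly attached with $B$ labelled $\bigl(\M_{f,\src}((\M_{f,\trg}^{-1}\Lambda_1)\cap\mathbb{Z}^d),\,\src(f)\bigr)$; again the Transfer Equation is automatically satisfied by the choice of this label. This gives a detour $V_r \xrightarrow{\overline{F}} B \xrightarrow{F} V_r$ of type $\overline{f},f$ based at $V_r$, whose last traversed edge is $F$ read in its natural $f$-direction.

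Concatenating the three pieces produces the walk
\[
V_0 \xrightarrow{E} A \xrightarrow{\overline{E}} V_0 \xrightarrow{E_1} \cdots \xrightarrow{E_r} V_r \xrightarrow{\overline{F}} B \xrightarrow{F} V_r,
\]
which is a $\mathscr{H}$-path of type $e,\overline{e},e_1,\dots,e_r,\overline{f},f$, hence of the required type $e,\dots,e_1,\dots,e_r,\dots,f$; it contains $E_1,\dots,E_r$ as a subpath, and its two endpoints $V_0,V_r$ are still labelled $(\Lambda_0, v)$ and $(\Lambda_1, w)$. I expect the only delicate points to be the orientation bookkeeping (an edge labelled $e$ with $\src(e)=v$ is \emph{outgoing} at the $v$-vertex $V_0$, while one labelled $f$ with $\trg(f)=w$ is \emph{incoming} at the $w$-vertex $V_r$) and the observation that the two relevant degree bounds are always at least $1$ — this is precisely what guarantees that each detour can be formed, either by reusing an existing edge or by attaching a fresh pendant edge without violating the saturation inequalities. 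Finally, since $\mathscr{H}$-equivalence only requires a connected $\mathscr{H}$-graph containing the two labelled vertices, the presence of the two backtracks in the walk is harmless.
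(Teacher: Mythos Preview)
Your argument produces a walk with backtracks, not a genuine $\mathscr{H}$-path. In this paper an $\mathscr{H}$-path is itself an $\mathscr{H}$-\emph{graph} whose underlying graph is a segment: when the paper writes ``the labelled graph obtained by the concatenation of the $\mathscr{H}$-paths \ldots\ is a $\mathscr{H}$-graph of type $e,\overline{e},e_1,\ldots,e_r$'' (Case~1 of the proof), the three vertices $V_0,V_1,V_2$ are \emph{distinct} and the new source $V_0$ carries a single incident edge, of type $e$. Your detour $E,\overline{E}$ realises, as a labelled graph, a single pendant edge attached at the old source $V_0=\src(E_1)$; that vertex now carries two outgoing edges, one of type $e$ and one of type $e_1$. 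This is exactly what the downstream applications cannot tolerate: in Corollary~\ref{coreq} and Theorem~\ref{dynamic} the output path is glued at a vertex which is unsaturated only with respect to the prescribed type $e$ and may already be saturated with respect to $e_1$, so identifying your $V_0$ with that vertex would violate the degree bound in the definition of an $\mathscr{H}$-graph.

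That the shortcut cannot work is also visible from the hypotheses you never invoke. The whole of Section~\ref{eq} assumes $G$ is not of the form $\mathbb{Z}^d\rtimes\mathbb{F}_r$, and the paper's proof exploits this (via $|\det\B|\geq 2$ in Case~1, and the careful case analysis of Case~2) to locate a vertex where two \emph{distinct} edges of the same type can coexist, yielding a genuine there-and-back $V_0\to V_1\to V_2$ with $V_2\neq V_0$ a fresh copy of $(\Lambda_0,v)$. When $G=\mathbb{Z}^d\rtimes\mathbb{F}_r$ every degree bound equals~$1$ and the reduced-path statement is simply false in general: if $\rho:\mathbb{F}_r\to GL_d(\mathbb{Z})$ is chosen so that $\Stab_{\mathbb{F}_r}(\Lambda_0)$ is trivial, no non-empty reduced word brings $(\Lambda_0,v)$ back to itself, hence nothing can be prepended while keeping the source label. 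Your backtrack trick would ``prove'' the lemma in that setting as well, which confirms that it is not establishing the intended statement.
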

	
	\begin{proof}
		Let $E_1, ..., E_r$ be an $\mathscr{H}$-path labeled $e_1, ..., e_r$ (with $e_i \in \mathcal{E}(\mathscr{H})$ for every $i \in \llbracket 1, r \rrbracket$) such that \begin{itemize}
			\item $\src(E_1)$ is labeled $(\Lambda_0, v)$;
			\item $\trg(E_r)$ is labeled $(\Lambda_1, w)$.
		\end{itemize}
		Without loss of generality, one can assume this $\mathscr{H}$-path to be reduced.
		We first prove that there exists a reduced $\mathscr{H}$-path labeled $e,...,e_1, ..., e_r$ that connects a vertex labeled $(\Lambda_0, v)$ to a vertex labeled $(\Lambda_1, w)$, and that contains $E_1,...,E_r$ as a subpath. If $e = e_1$, then the edge path $E_1, ..., E_r$ is suitable. 
		
		Otherwise, let us denote by $(\rA, \B) = (\M_{e, \src}, \M_{e, \trg})$ and $v' = \trg(e)$.  Let us define \[\Lambda_0' = \B(\rA^{-1}\Lambda_0 \cap \mathbb{Z}^d).\]
		By construction, $(\Lambda_0, v)$ and $(\Lambda_0', v')$ satisfy the Transfer Equation \ref{transfer} \[\rA^{-1}\Lambda_0 \cap \mathbb{Z}^d = \B^{-1}\Lambda_0' \cap \mathbb{Z}^d.\]
		Hence there exists an $\mathscr{H}$-graph which consists of a single edge $E_0'$ labeled $e$ connecting a vertex $V_0$ labeled $(\Lambda_0, v)$ to a vertex $V_1$ labeled $(\Lambda_0', v')$.
		\paragraph{Case 1: Let us first assume that $|\det(\B)| \geq 2$.}
		As $|\det(B)| \geq 2$ and $\Lambda_0' \subseteq \B\mathbb{Z}^d$, one has \[\left|\mathbb{Z}^d/\langle \B\mathbb{Z}^d , \Lambda_0' \rangle\right| = |\det(\B)| \geq 2.\]
		Hence the labeled graph which consists of \begin{itemize}
			\item the edge $E_0'$; 
			\item an edge $E_1'$ labeled $\overline{e}$ connecting $V_1$ to a vertex $V_2$ labeled $(\Lambda_0, v)$
		\end{itemize}
		is an $\mathscr{H}$-graph of type $e, \overline{e}$ that connects a vertex labeled $(\Lambda_0, v)$ to a vertex labeled $(\Lambda_0, v)$. 
		Finally, as $e \neq e_1$ by assumption the labeled graph obtained by the concatenation of the $\mathscr{H}$-paths $E_0', E_1'$ and $E_1, ..., E_r$ is an $\mathscr{H}$-graph of type $e, \overline{e}, e_1, ..., e_r$ that connects a vertex labeled $(\Lambda_0, v)$ to a vertex labeled $(\Lambda_1, w)$ as required. 
		\paragraph{Case 2: Otherwise, $|\det(\B)| =1$.} In particular, the graph $\mathscr{H}$ being reduced, $e$ is a loop. We distinguish two subcases:
		\subparagraph{Subcase 2.1:} There exists an edge $f \neq \overline{e}$ with source $v$ such that $|\det(\M_{f, \trg})| \geq 2$. Hence we can apply Case 1 to obtain an $\mathscr{H}$-path $E_1', E_2'$ of type $f, \overline{f}$ that connects a vertex labeled $(\Lambda_0', v)$ to a vertex labeled $(\Lambda_0', v)$. As $f \neq \overline{e}$ and $(\Lambda_0, v)$ and $(\Lambda_0', v')$ satisfy the Transfer Equation \ref{transfer}, the labeled graph which consists of \begin{itemize}
			\item the concatenation of the $\mathscr{H}$-path $E_0'$ with the $\mathscr{H}$-path $E_1', E_2'$;
			\item an edge $E_3'$ labeled $\overline{e}$ with source $\trg(E_2')$ and target a new vertex labeled $(\Lambda_0,v)$
		\end{itemize}
		is an $\mathscr{H}$-graph. As $e \neq e_1$, we can concatenate the $\mathscr{H}$-path $E_0',E_1',E_2',E_3'$ and the $\mathscr{H}$-path $E_1, ..., E_r$ to obtain an $\mathscr{H}$-path labeled $e, f, \overline{f}, \overline{e}, e_1, ..., e_r$ that connects a vertex labeled $(\Lambda_0,v)$ to a vertex labeled $(\Lambda_1, w)$. 
		\subparagraph{Subcase 2.2:} Otherwise, as $G$ is non-amenable, Proposition~\ref{amenable} implies that $\mathscr{H}$ consists of a collection of at least two loops based at a single vertex such that every label (except possibly $\rA$) is in $\GL_d(\mathbb{Z})$. As $G$ is not a semidirect product $\mathbb{Z}^d \rtimes \mathbb{F}_r$, one has necessarily $|\det(A)| \geq 2$. Let $f \in \mathscr{H} \setminus \{e, \overline{e}\}$.
		Subcase 2.1 delivers an $\mathscr{H}$-path $E_1',E_2',E_3',E_4'$ of type $f, \overline{e}, e, \overline{f}$ that connects a vertex labeled $(\Lambda_0', v)$ to a vertex labeled $(\Lambda_0', v)$. As $f \neq \overline{e}$, the $\mathscr{H}$-graph which consists of \begin{itemize}
			\item the concatenation of the $\mathscr{H}$-path $E_0'$ with the $\mathscr{H}$-graph $E_1', E_2', E_3', E_4'$;
			\item an edge $E_5'$ of type $\overline{e}$ that connects $\trg(E_4'$) to a new vertex labeled $(\Lambda_0, v)$
		\end{itemize}
		is an $\mathscr{H}$-path. As $e \neq e_1$, we can concatenate the $\mathscr{H}$-path $E_0',E_1',E_2',E_3', E_4', E_5'$ and the $\mathscr{H}$-path $E_1, ..., E_r$ to obtain an $\mathscr{H}$-path labeled $e, f, \overline{e}, e, \overline{f}, \overline{e}, e_1, ..., e_r$ that connects a vertex labeled $(\Lambda_0,v)$ to a vertex labeled $(\Lambda_1, w)$. 
		
		Hence we proved that there exists an $\mathscr{H}$-path labeled $e, ..., e_1, ..., e_r$ that connects a vertex labeled $(\Lambda_0, v)$ to a vertex labeled $(\Lambda_1, w)$. Using this result on the reverse path (that connects a vertex labeled $(\Lambda_1, w)$ to a vertex labeled $(\Lambda_0, v)$) leads to an $\mathscr{H}$-path of type $e, ..., e_1, ..., e_r, ..., f$ that connects a vertex labeled $(\Lambda_0, v)$ to a vertex labeled $(\Lambda_1, w)$, which leads to the conclusion.
	\end{proof}
	
	\begin{corollary}\label{coreq}
		$\mathscr{H}$-equivalence with respect to a prescribed vertex $v$ is an equivalence relation. 
		
		More precisely, the following holds: if $E_1, ..., E_r$ (\textit{resp.} $F_1, ..., F_s$) is an $\mathscr{H}$-path that connects a vertex labeled $(\Lambda_0, v)$ (\textit{resp.} $(\Lambda_1, v)$) to a vertex labeled $(\Lambda_1, v)$ (\textit{resp.} $(\Lambda_2, v)$), then there exists an $\mathscr{H}$-path that contains $E_1, ..., E_{r-1}$ and $F_2,..., F_s$ as disjoint (labeled) subpaths.
	\end{corollary}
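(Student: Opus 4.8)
The plan is to verify the three axioms of an equivalence relation, the only non-trivial one being transitivity; the construction establishing transitivity will directly give the \emph{more precise} statement, which in turn implies transitivity since the $\mathscr{H}$-path it produces joins a vertex labelled $(\Lambda_0,v)$ to one labelled $(\Lambda_2,v)$. Reflexivity is immediate: a single vertex labelled $(\Lambda_0,v)$ is a connected $\mathscr{H}$-graph. Symmetry holds because the defining conditions of a $\mathscr{H}$-graph (the Transfer Equation~\ref{transfer} and the bounds on the number of incident edges with a prescribed label) are invariant under reversing all orientations and replacing each label $e$ by $\overline{e}$, so $\overline{E_r},\dots,\overline{E_1}$ witnesses the reverse equivalence whenever $E_1,\dots,E_r$ witnesses one.

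So suppose given $\mathscr{H}$-paths $E_1,\dots,E_r$ from $(\Lambda_0,v)$ to $(\Lambda_1,v)$ and $F_1,\dots,F_s$ from $(\Lambda_1,v)$ to $(\Lambda_2,v)$, with $E_r$ (resp. $F_1$) labelled $e_r$ (resp. $f_1$), so $\trg(e_r)=v=\src(f_1)$. The elementary observation that drives everything is: the concatenation $E_1,\dots,E_r,F_1,\dots,F_s$, formed by identifying $\trg(E_r)$ with $\src(F_1)$, is again a $\mathscr{H}$-path \emph{as soon as} $f_1\neq\overline{e_r}$ — the only thing to check is at the identified vertex labelled $(\Lambda_1,v)$, where the number of incident edges carrying any given label is exactly the sum of the two contributions, so the bounds are inherited. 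When $f_1\neq\overline{e_r}$ this path exhibits $E_1,\dots,E_{r-1}$ as a prefix and $F_2,\dots,F_s$ as a suffix, separated by $E_r$ and $F_1$, hence as disjoint labelled subpaths, and we are done.

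It remains to handle the ``backtrack'' case $f_1=\overline{e_r}=:\overline{e}$. If there exists an edge $g$ with $\src(g)=v$ and $g\neq\overline{e}$ — which holds whenever $e$ is a loop (take $g=e$) or $v$ has degree $\geq 2$ — I would apply Lemma~\ref{eqrel} to $F_1,\dots,F_s$ with $g$ as prepended edge, obtaining a $\mathscr{H}$-path from a vertex labelled $(\Lambda_1,v)$ to one labelled $(\Lambda_2,v)$, using otherwise fresh vertices, whose first edge is labelled $g$ and which still contains $F_1,\dots,F_s$ (hence $F_2,\dots,F_s$) as a subpath; its first edge being labelled $g\neq\overline{e_r}$, concatenating it with $E_1,\dots,E_r$ at $(\Lambda_1,v)$ is legitimate by the previous paragraph and finishes the case. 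Otherwise $\overline{e}$ is the unique edge with source $v$, so $e$ is the unique edge with target $v$, $e$ is not a loop, $v$ is a leaf, and the labels along each of the two paths alternate between $\overline{e}$ and $e$; in particular $r,s\geq 2$, $e_{r-1}=\overline{e}$ and $f_2=e$. Put $w:=\src(e)$, $\A:=\M_{e,\src}$, $\B:=\M_{e,\trg}$; since $\mathscr{H}$ is reduced and $e$ is not a loop, $|\det\B|\geq 2$. Writing $(M,w)$ for the label of $\trg(E_{r-1})$ and $(N,w)$ for that of $\src(F_2)$, the Transfer Equations along $E_r$ and $F_1$ give $(\A^{-1}M)\cap\mathbb{Z}^d=(\B^{-1}\Lambda_1)\cap\mathbb{Z}^d=(\A^{-1}N)\cap\mathbb{Z}^d=:\Sigma$. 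I would then insert the length-two connector $(M,w)\xrightarrow{e}(\B\Sigma,v)\xrightarrow{\overline{e}}(N,w)$ through a fresh middle vertex — its two Transfer Equations collapse to $(\A^{-1}M)\cap\mathbb{Z}^d=\Sigma=(\A^{-1}N)\cap\mathbb{Z}^d$, and the bound at the middle vertex is $|\mathbb{Z}^d/\langle\B\Sigma,\B\mathbb{Z}^d\rangle|=|\det\B|\geq 2$ — and glue $E_1,\dots,E_{r-1}$, this connector, and $F_2,\dots,F_s$; at the two new junctions $(M,w)$ and $(N,w)$ the relevant bounds $|\mathbb{Z}^d/\langle M,\A\mathbb{Z}^d\rangle|\geq 2$ and $|\mathbb{Z}^d/\langle N,\A\mathbb{Z}^d\rangle|\geq 2$ hold, being exactly the bounds already forced by the $\mathscr{H}$-graph axioms for $E_1,\dots,E_r$ at $\src(E_r)$ (which carries the two edges $E_r$ and $\overline{E_{r-1}}$, both labelled $e$) and for $F_1,\dots,F_s$ at $\src(F_2)$ (which carries $F_2$ and $\overline{F_1}$, both labelled $e$).

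The only genuine difficulty is the backtrack case; within it, the leaf subcase — where Lemma~\ref{eqrel} cannot supply the needed extra outgoing edge at $v$ and one must interpose an ad hoc connector instead — is the delicate point, and checking that the edge-multiplicity bounds at the newly created junction vertices are automatically inherited from the two given paths is the step demanding the most care.
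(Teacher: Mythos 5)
Your argument is essentially the paper's: reflexivity and symmetry are immediate, and transitivity is proved by concatenation with the same case split on whether $f_1=\overline{e_r}$, the same use of Lemma~\ref{eqrel} to insert a buffer when some other edge is available at $v$ (you extend the second path where the paper extends the first, which is immaterial), and in the remaining ``leaf'' subcase the same two-edge connector through a fresh vertex labeled $\B\Sigma=\B\bigl(\A^{-1}M\cap\mathbb{Z}^d\bigr)$, validated by $|\det \B|\geq 2$ coming from reducedness. The construction is correct, but one intermediate assertion is false as stated: in the leaf subcase the labels along the two paths need \emph{not} alternate between $e$ and $\overline{e}$ (the paths may wander in the rest of $\mathscr{H}$ away from $v$; e.g.\ if $\mathscr{H}$ is a segment at $v$ plus a loop at the other endpoint), so you cannot assert $e_{r-1}=\overline{e}$ and $f_2=e$, only $r,s\geq 2$. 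This does not break your construction, because the junction verification you base on that claim holds anyway, and for a simpler reason: the connector's first (resp.\ second) edge has the same label and the same direction at $(M,w)$ (resp.\ $(N,w)$) as the deleted edge $E_r$ (resp.\ $F_1$), so the count of incident edges in each directed label class at these vertices is unchanged from the original $\mathscr{H}$-paths, and the multiplicity bounds are inherited whether or not $e_{r-1}=\overline{e}$ or $f_2=e$. With that one justification repaired, your proof matches the paper's; the paper itself leaves the junction check implicit, so your extra care there is welcome once stated correctly.
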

	
	\begin{proof}
		Let $\Lambda_0, \Lambda_1, \Lambda_2 \leq \mathbb{Z}^d$.
		
		As the empty path is an $\mathscr{H}$-path that connects any vertex labeled $(\Lambda_0, v)$ to itself, $\Lambda_0$ is $\mathscr{H}$-equivalent to itself (w.r.t. $v$).
		
		If $\Lambda_0$ is $\mathscr{H}$-equivalent to $\Lambda_1$ w.r.t. $v$, then there exists a reduced $\mathscr{H}$-path $E_1, ..., E_r$ with source labeled $(\Lambda_0, v)$ and target labeled $(\Lambda_1, v)$. The reversed $\mathscr{H}$-path $\overline{E_r}, ..., \overline{E_1}$ has its source labeled $(\Lambda_1, v)$ and its target labeled $(\Lambda_0, v)$. Thus, $\Lambda_1$ is $\mathscr{H}$-equivalent to $\Lambda_0$ w.r.t. $v$.
		
		Let us assume that $\Lambda_0$ is $\mathscr{H}$-equivalent to $\Lambda_1$ w.r.t. $v$ and that $\Lambda_1$ is $\mathscr{H}$-equivalent to $\Lambda_2$ w.r.t. $v$. Let $E_1, ..., E_r$ be a reduced $\mathscr{H}$-path of type $e_1, ..., e_r$ with source labeled $(\Lambda_0, v)$ and target labeled $(\Lambda_1, v)$ and let $F_1, ..., F_s$ be a reduced $\mathscr{H}$-path of type $f_1, ..., f_s$ with source labeled $(\Lambda_1, v)$ and target labeled $(\Lambda_2, v)$. 
		\paragraph{Case 1: Let us assume that $f_1 \neq \overline{e_r}$.} Then, the concatenation of these $\mathscr{H}$-paths delivers an $\mathscr{H}$-path with source labeled $(\Lambda_0, v)$ and target labeled $(\Lambda_2, v)$. 
		\paragraph{Case 2: Otherwise, let us denote by $e = e_r = \overline{f_1}$.} 
		\subparagraph{Subcase 2.1: First, we assume that there exists an edge $g \neq e_r$ such that $\trg(g) = v$.} By Lemma \ref{eqrel}, there exists an $\mathscr{H}$-path $E_1', ..., E_t'$ of type $e_1, ..., e_r, ..., g$ that contains $E_1, ..., E_r$ and that connects a vertex labeled $(\Lambda_0, v)$ to a vertex labeled $(\Lambda_1, v)$. As $g \neq \overline{f_1}$, the concatenation of the $\mathscr{H}$-paths $E_1', ..., E_t'$ and $F_1, ..., F_s$ delivers an $\mathscr{H}$-path with source labeled $(\Lambda_0, v)$ and target labeled $(\Lambda_2, v)$. 
		\subparagraph{Subcase 2.2: Now we assume that $e$ is the unique edge with target $v$.} Notice that in this case, $e$ cannot be a loop. If $r=0$ or $s=0$, we are done. Hence we assume that $r, s \geq 1$. Let $u = \src(e)$ and $(\rA, \B) := (\M_{e, \src}, \M_{e, \trg})$. Let $\left(\widetilde{\Lambda_1}, u\right)$ and $\left(\widetilde{\Lambda_2}, u\right)$ be the labels of $\src(E_r)$ and $\trg(F_1)$, respectively. By the Transfer Equation \ref{transfer} we have \begin{align*}
			\rA^{-1}\widetilde{\Lambda_1} \cap \mathbb{Z}^d &= \B^{-1}\Lambda_1 \cap \mathbb{Z}^d \\
			&= \rA^{-1}\widetilde{\Lambda_2} \cap \mathbb{Z}^d.
		\end{align*}
		Let us define \[\Lambda_1' = \B\left(\rA^{-1}\widetilde{\Lambda_1} \cap \mathbb{Z}^d\right).\]
		As $e$ is not a loop and $\mathscr{H}$ is reduced, one has $|\det(B)| \geq 2$. Hence the labeled graph which consists of \begin{itemize}
			\item an edge $E_0'$ labeled $e$ with source labeled $\left(\widetilde{\Lambda_1}, u\right)$ and target $V$ labeled $(\Lambda_1', v)$;
			\item an edge $E_1'$ labeled $\overline{e}$ with source $V$ and target a vertex labeled $\left(\widetilde{\Lambda_2}, u\right)$
		\end{itemize}
		is an $\mathscr{H}$-graph. 
		Hence the concatenation of the $\mathscr{H}$-path $E_1, ..., E_{r-1}$, the $\mathscr{H}$-path $E_0', E_1'$ and of the $\mathscr{H}$-path $F_2, ..., F_s$ delivers an $\mathscr{H}$-path that connects a vertex labeled $(\Lambda_0, v)$ to a vertex labeled $(\Lambda_2, v)$. 
		
		In any case we proved that $\Lambda_0$ is $\mathscr{H}$-equivalent to $\Lambda_2$ w.r.t. $v$. 
	\end{proof}
	
	\begin{lemma}\label{cycle}
		Let $\Lambda_0$ be a subgroup of $\mathbb{Z}^d$ and $v \in \mathcal{V}(\mathscr{H})$. There exists a finite $\mathscr{H}$-graph \begin{itemize}
			\item that is not simply connected;
			\item that contains a vertex labeled $(\Lambda_0, v)$;
			\item that contains at least two non-saturated vertices.
		\end{itemize} 
	\end{lemma}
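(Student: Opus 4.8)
I would prove the lemma by an explicit construction, splitting into cases according to the shape of $\mathscr{H}$. Two elementary remarks about $\mathscr{H}$-graphs of a GBS group of rank $d$ will be used throughout. First, if a vertex label $(\Lambda,u)$ satisfies $\Lambda\subseteq\M_{g,\src}\mathbb{Z}^d$ for some edge $g$ with $\src(g)=u$, then the slot at that vertex for outgoing $g$-edges has size $|\mathbb{Z}^d/\langle\Lambda,\M_{g,\src}\mathbb{Z}^d\rangle|=|\det\M_{g,\src}|$, and symmetrically for incoming edges; and since every slot has size $\ge 1$, a vertex with an empty slot is never saturated. Second, given any vertex label $(\Lambda,u)$ and any combinatorial path in $\mathscr{H}$ issued from $u$, there is a path-shaped $\mathscr{H}$-graph starting at $(\Lambda,u)$ and following that path, the successive labels being forced by the Transfer Equation (apply $\M_{g,\trg}(\M_{g,\src}^{-1}(\,\cdot\,)\cap\mathbb{Z}^d)$ at each step); in particular the label reached along an edge $g$ lies inside $\M_{g,\trg}\mathbb{Z}^d$. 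Using Proposition \ref{amenable} together with the reducedness of $\mathscr{H}$ and the standing hypothesis that $G$ is non-amenable and not isomorphic to a semidirect product $\mathbb{Z}^d\rtimes\mathbb{F}_r$, I reduce to two cases: (A) $\mathscr{H}$ has an edge $e$ with $|\det\M_{e,\src}|\ge 2$ and $|\det\M_{e,\trg}|\ge 2$ (this holds whenever $\mathscr{H}$ has a non-loop edge, by reducedness, or a loop with both labels non-invertible); (B) $\mathscr{H}$ consists of a single vertex $v$ carrying at least two loops (indeed, if $\mathscr{H}$ is a bouquet of loops with no edge as in (A), every loop has an invertible label, the hypothesis $G\not\cong\mathbb{Z}^d\rtimes\mathbb{F}_r$ forces some loop to have exactly one invertible label, and Proposition \ref{amenable} then forces $\mathscr{H}$ to have at least two loops).

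In case (A), pick a combinatorial path in $\mathscr{H}$ from $v$ to $\trg(e)$ and append $\overline{e}$, so that it ends at $\src(e)$ with last edge $\overline{e}$; build the corresponding path-shaped $\mathscr{H}$-graph $\mathcal{P}$ starting at $V_0:=(\Lambda_0,v)$ and let $U$, labelled $(\Lambda_U,\src(e))$, be its last vertex. By the second remark, $\Lambda_U\subseteq\M_{\overline{e},\trg}\mathbb{Z}^d=\M_{e,\src}\mathbb{Z}^d$. Now attach to $U$ two parallel edges labelled $e$, both ending at a single new vertex $W$ whose label is forced by the Transfer Equation, so $\Lambda_W\subseteq\M_{e,\trg}\mathbb{Z}^d$. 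By the first remark the slot at $U$ for outgoing $e$-edges has size $|\det\M_{e,\src}|\ge 2$ and the slot at $W$ for incoming $e$-edges has size $|\det\M_{e,\trg}|\ge 2$, so the resulting finite labelled graph $\mathcal{G}$ is a $\mathscr{H}$-graph. It contains $(\Lambda_0,v)$; the two parallel $e$-edges give it first Betti number $\ge 1$, so it is not simply connected; $U$ is non-saturated because it has a single incident $\overline{e}$-edge while its slot for those edges has size $\ge 2$; and $W$ is non-saturated because it has no incident $\overline{e}$-edge while its slot for those has size $\ge 2$.

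In case (B), every edge of $\mathscr{H}$ is a loop based at $v$. Applying Lemma \ref{eqrel} to the empty $\mathscr{H}$-path at a vertex labelled $(\Lambda_0,v)$, prepending and appending some loop $h$, yields a $\mathscr{H}$-path $P$ of positive length (necessarily of length at least $2$) connecting a vertex $p_0$ labelled $(\Lambda_0,v)$ to a vertex $p_k$ labelled $(\Lambda_0,v)$. If $p_0=p_k$ then $P$ is already closed; otherwise identify $p_0$ and $p_k$, which is legitimate because both carry the label $(\Lambda_0,v)$ and the merged vertex only acquires the first edge of $P$ on its outgoing side and the last edge of $P$ on its incoming side, each within a non-zero slot. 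Either way one obtains a connected $\mathscr{H}$-graph $\mathcal{C}$ with at least two vertices, containing a vertex labelled $(\Lambda_0,v)$, and with first Betti number $\ge 1$. Finally, saturation of a vertex of $\mathcal{C}$ would require, for each of the $\ge 2$ loops of $\mathscr{H}$, at least one incident edge in each of the two slot directions, hence degree at least $4$; but every vertex of $\mathcal{C}$ has degree at most $2$, so every vertex of $\mathcal{C}$ is non-saturated, which gives the two required non-saturated vertices.

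The main obstacle is case (B): a naive ``bigon'' made of two parallel $e$-edges fails there, because one of the two labels of $e$ is invertible and the corresponding slot then has size $1$, which is what forces the detour through Lemma \ref{eqrel} and the subsequent gluing argument. Throughout, the non-saturation bookkeeping — arranging that the relevant empty slots have size $\ge 2$, which is precisely the point where non-amenability and the non-splitting hypothesis enter — must be carried out with care, as must the verification that the gluing in case (B) preserves the axioms of a $\mathscr{H}$-graph.
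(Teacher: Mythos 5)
Your case (B) is essentially correct (and close in spirit to the paper's treatment of loops: with at least two loops every vertex of a cycle of degree $2$ misses some slot, and the endpoint gluing is harmless because the two glued edges occupy different slots at the merged vertex). But case (A) contains a genuine gap in the saturation bookkeeping. In a $\mathscr{H}$-graph, an edge labelled $\overline{e}$ arriving at a vertex $V$ lying over $\src(e)$ is the same geometric edge as its reverse, which is labelled $e$ with source $V$; so it occupies the very slot, of capacity $\left|\mathbb{Z}^d/\langle \Lambda_V, \M_{e,\src}\mathbb{Z}^d\rangle\right|$, into which you want to fit your two new parallel $e$-edges. That this is the intended counting is visible in the paper's own proof of Lemma \ref{eqrel}: Case 1 there needs $|\det(\B)|\ge 2$ precisely so that the middle vertex of a path of type $e,\overline{e}$ can carry both the incoming $e$-edge and the outgoing $\overline{e}$-edge, and Case 2 exists only because this fails when $|\det(\B)|=1$; under your separate-slot reading no such precaution would be needed. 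Consequently your vertex $U$ carries three edges in a slot of size $|\det \M_{e,\src}|$, which is only guaranteed to be $\ge 2$: if $|\det\M_{e,\src}|=2$ the labelled graph you build is not a $\mathscr{H}$-graph at all. And when it is valid, your designated witnesses need not be non-saturated: take $\mathscr{H}$ a single non-loop edge with $\M_{e,\src}=\diag(3,1,\dots,1)$, $\M_{e,\trg}=\diag(2,1,\dots,1)$ (non-amenable by Proposition \ref{amenable}), $v=\trg(e)$ and $\Lambda_0=\mathbb{Z}^d$; then your graph ($\overline{e}$ followed by the bigon) has $U$ with $1+2=3$ edges in its capacity-$3$ slot and $W$ with $2$ edges in its capacity-$2$ slot, so $U$ and $W$ are saturated, $V_0$ is saturated as well, and the conclusion of Lemma \ref{cycle} fails for this construction. (Your statement that $W$ has ``no incident $\overline{e}$-edge'' overlooks that the reverses of the two parallel edges are exactly such edges, filling the incoming-$e$ slot completely when $|\det\M_{e,\trg}|=2$.)

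This is precisely the difficulty the paper's proof is organised around: closing up a cycle is not free, because the closing edges consume slot capacity at the junction vertices. The paper therefore first builds a path that is long enough to contain at least four vertices non-saturated relative to a fixed edge $\widehat{e}$ (using non-amenability and reducedness to locate the spare capacity), and only then closes it into a cycle via Lemma \ref{eqrel}; the closing path uses up at most two of these four vertices, and two remain non-saturated. To repair your case (A) you would need an analogous margin — for instance attaching the bigon only where the relevant slot has residual capacity at least $2$ after accounting for the access path, which a determinant equal to $2$ does not allow — rather than the direct bigon at the endpoint of the path.
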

	
	\begin{proof}
		We distinguish two cases:
		\paragraph{Case 1: Assume that there exists a loop $e_0 \in \mathcal{E}(\mathscr{H})$ based at $v$.} Let us consider an $\mathscr{H}$-path $E_1, E_2$ labeled $e_0, e_0$ such that \begin{itemize} 
        \item the vertex $\trg(E_1) = \src(E_2)$ is labeled $(\Lambda_0, v)$;
        \item the vertex $\src(E_1)$ is labeled $(\Lambda_1,v)$, where $\Lambda_1 = \M_{e_0, \src}\left(\M_{e_0, \trg}^{-1}\Lambda_0 \cap \mathbb{Z}^d\right)$ (so that $\Lambda_1$, $\Lambda_0$ and $e_0$ satisfy the Transfer Equation \eqref{transfer});
         \item the vertex $\trg(E_2)$ is labeled $(\Lambda_2,v)$, where $\Lambda_2 = \M_{e_0, \trg}\left(\M_{e_0, \src}^{-1}\Lambda_0 \cap \mathbb{Z}^d\right)$ (so that $\Lambda_0$, $\Lambda_2$ and $e_0$ satisfy the Transfer Equation \eqref{transfer}).\end{itemize} 
        We apply Lemma \ref{eqrel} to get an $\mathscr{H}$-path labeled $e_0, ..., e_0$ with source $\trg(E_2)$ and target $\src(E_1)$. 
        \subparagraph{Subcase 1.a:} If there exists $f_0 \in \mathcal{E}(\mathscr{H}) \setminus \{e_0, \overline{e_0}\}$, then the vertices $\trg(E_2)$ and $\src(E_1)$ are neither saturated relatively to $f_0$ nor to $\overline{f_0}$, which leads to the conclusion in this particular case. 
        \subparagraph{Subcase 1.b:} Otherwise, as $G$ is non-amenable, one has $|\det(\M_{e_0, \src})|\geq 2$ and $|\det(\M_{e_0, \trg})|\geq 2$ by Proposition \ref{amenable}. In particular: \begin{itemize}
            \item as $\src(E_1)$ is labeled by a subgroup $\Lambda_1$ of $\M_{e_0, \src}\mathbb{Z}^d$, one has \begin{align*}\left|\mathbb{Z}^d/\left\langle \Lambda_1, \M_{e_0, \src}\mathbb{Z}^d\right\rangle \right|&= \left|\det(\M_{e_0, \src})\right| \\&\geq 2,\end{align*} so, as $\src(E_1)$ has a single outgoing edge labeled $e_0$, the vertex $\src(E_1)$ is non-saturated relatively to $e_0$;
            \item likewise, the vertex $\trg(E_2)$ is non-saturated relatively to $\overline{e_0}$.
        \end{itemize}
		\paragraph{Case 2: Otherwise, we fix an edge $e_0 \in \mathcal{E}(\mathscr{H})$ with source $v$ such that $w :=\trg(e_0) \neq v$.} In particular, the graph of groups $\mathscr{H}$ being reduced, denoting by $(\rA, \B) := \left(\M_{e_0, \src}, \M_{e_0, \trg}\right)$, one has $|\det(A)| \geq 2$ and $|\det(B)| \geq 2$. Let us define an $\mathscr{H}$-path $E_1, E_2, E_3, E_4, E_5,E_6,E_7,E_8$ as follows: 
		\begin{itemize}
			\item $E_1$, $E_3$, $E_5$, $E_7$ are labeled $e_0$ and $E_2$, $E_4$, $E_6$, $E_8$ are labeled $\overline{e_0}$;
			\item $\src(E_1)$ is labeled $(\Lambda_0, v)$;
			\item the vertices $\trg(E_1)$, $\trg(E_3)$, $\trg(E_5)$ and $\trg(E_7)$ are all labeled $(\B(\rA^{-1}\Lambda_0 \cap \mathbb{Z}^d), v)$;
			\item the vertices $\trg(E_2)$, $\trg(E_4)$, $\trg(E_6)$ and $\trg(E_8)$ are labeled $(\Lambda_0 \cap \rA\mathbb{Z}^d, v)$.
		\end{itemize}
        Notice that at least four vertices are non-saturated relatively to some edge $\widehat{e} \in \mathcal{E}(\mathscr{H})$. Indeed, by Proposition \ref{amenable}, as $G$ is non-amenable and $\mathscr{H}$ is reduced: \begin{itemize}
		    \item either $|\det(\rA)|\geq 3$. In this case, the vertices $\trg(E_2)$, $\trg(E_4)$, $\trg(E_6)$ and $\trg(E_8)$ are non-saturated relatively to $e_0$;
            \item or $|\det(\B)| \geq 3$. In this case, $\trg(E_1)$, $\trg(E_3)$, $\trg(E_5)$ and $\trg(E_7)$ are non-saturated relatively to $\overline{e_0}$;
            \item or there exists an edge $f_0 \neq e_0$ with source $v$. In this case, the vertices $\trg(E_2)$, $\trg(E_4)$, $\trg(E_6)$ and $\trg(E_8)$ are non-saturated relatively to $f_0$;
            \item or there exists an edge $f_0 \neq \overline{e_0}$ with target $w$. In this case, $\trg(E_1)$, $\trg(E_3)$, $\trg(E_5)$ and $\trg(E_7)$ are non-saturated relatively to $f_0$.
		\end{itemize}
        In any case, denoting by $V_1,V_2,V_3,V_4$ four non-saturated vertices relatively to some edge $\widehat{e}$, Lemma \ref{eqrel} delivers a reduced $\mathscr{H}$-path $F_1, ..., F_r$ of type $\widehat{e}, ..., \overline{\widehat{e}}$ with source and target labeled by the label of $V_1$ and $V_2$, respectively. Gluing its source to $V_1$ and its target to $V_2$ leads to an $\mathscr{H}$-graph $\mathcal{C}$ in which the vertices $V_3$ and $V_4$ are non-saturated relatively to $\widehat{e}$. Hence $\mathcal{C}$ is suitable.
	\end{proof}
	
	\section{Perfect kernel of non-amenable GBS groups}\label{perfectkernel}
	
	The goal of this section is to give an explicit description of the perfect kernel in the case where the GBS group $G$ is non-amenable, \textit{i.e.} is defined neither by a single loop with at least one invertible matrix nor by a segment with two matrices having determinant $\pm 2$. 

	We start with the following lemma, that gives an inclusion in a more general setting: 
	
	\begin{lemma}\label{inclusiond}
		Let $G$ be the fundamental group of a finite graph of groups $\mathscr{H}$ such that, for every $v \in \mathcal{V}(\mathscr{H})$, the group $G_v$ is Noetherian (\textit{i.e.} every subgroup of $G_v$ is finitely generated (\textit{e.g.} the groups $G_v$ are finitely generated abelian groups)). Let us denote by $\mathcal{T}$ the Bass-Serre tree of $\mathscr{H}.$ Then \[\mathcal{K}(G) \subseteq \{H \leq G \mid H \backslash \mathcal{T} \text{\ is infinite}\}. \]  
	\end{lemma}
	
	\begin{proof}
		Let $H$ be a subgroup of $G$ whose graph of groups $H \backslash \mathcal{T}$ is finite.
		Let us show that under the assumptions of the lemma, the set \[\Omega = \{H' \leq G, H \leq H'\}\] is a countable neighborhood of $H$ in $\Sub(G)$. 
		
		First notice that every element of $\Omega$ has a finite graph of groups. In particular, any element of $\Omega$ is finitely generated: denoting by $\mathscr{K}$ its graph of groups, Bass-Serre theory tells us that it is generated by a finite number of subgroups of the groups $G_v$ (one per each vertex of $\mathscr{K}$, each of these being finitely generated by noetherianity of the groups $G_v$), and by one element per edge of $\mathscr{K}$. In particular: \begin{itemize}
			\item as $H$ belongs to $\Omega$, it is finitely generated so $\Omega$ is an open neighborhood of $H$;
			\item $\Omega$ is included in the subset of finitely generated subgroups, hence is countable.
		\end{itemize}
		This proves that $H$ has a countable neighborhood, thus $H \notin \mathcal{K}(G)$.
	\end{proof}

Now we prove Theorem \ref{computation}:
	
	\begin{theorem}\label{kerneld}
		Let $G$ be a non-amenable GBS group defined by a reduced graph of groups $\mathscr{H}$ and let $\mathcal{T}$ be the associated Bass-Serre tree. Then \[\mathcal{K}(G) = \{H \leq G \mid H \backslash \mathcal{T} \text{ \ is infinite} \}.\]
	\end{theorem}
	
	\begin{proof}
		The inclusion $\mathcal{K}(G) \subseteq \{H \leq G \mid H \backslash \mathcal{T} \text{ \ is infinite} \}$ follows from Lemma~\ref{inclusiond}. By the correspondence between $\mathscr{H}$-graphs and quotients of $\mathcal{T}$ by subgroups of $G$ established in Remark~\ref{eqbsh}, it suffices to show that any subgroup $H$ of $G$ whose $\mathscr{H}$-graph $\mathcal{G}$ is infinite belongs to $\mathcal{K}(G)$. Let us fix $v \in \mathcal{V}(\mathscr{H})$. Let us denote by $\alpha$ the associated $\mathscr{H}$-preaction. Let $\beta$ be a sub-$\mathscr{H}$-preaction of $\alpha$ whose $\mathscr{H}$-graph is a finite subgraph $K$ of $\mathcal{G}$. By assumption, $K$ has a vertex $V_0$ labeled $(\Lambda_0, v_0)$ for some $\Lambda_0 \leq \mathbb{Z}^d$ and some $v_0 \in \mathcal{V}\left(\mathscr{H}\right)$, which is not saturated relatively to some edge $e_0$ with source $v_0$. By Lemma \ref{forest}, there exists a saturated $\mathscr{H}$-graph $\mathcal{G}_0$ that contains $K$ and such that the quotient $\mathcal{G}_0/K$ is an infinite forest. Hence, by Lemma \ref{completiond}, there exists a saturated $\mathscr{H}$-preaction $\alpha_0$ that extends $\beta$ and whose $\mathscr{H}$-graph is $\mathcal{G}_0$.
		
		Let us first assume that $\mathscr{H}$ does not consist of a single vertex and a collection of loops labeled by matrices in $\GL_d(\mathbb{Z})$. By Lemma \ref{cycle}, there exists a non-simply connected finite $\mathscr{H}$-graph $\mathcal{C}$ that contains \begin{itemize}
			\item a vertex labeled $(\Lambda_0, v)$;
			\item two vertices $V$ and $W$ labeled $(\Lambda_1, v_1)$ and $(\Lambda_2, v_2)$ that are non-saturated relatively to some edges denoted by $e, f$, respectively.
		\end{itemize}
		By Lemma \ref{fini}, $\mathcal{C}$ is the $\mathscr{H}$-graph of an $\mathscr{H}$-preaction $\gamma$. Lemma \ref{eqrel} delivers an $\mathscr{H}$-path $\mathcal{P}$ of type $e_0,...,\overline{e}$ that connects $V_0$ to $V$. The vertex $W$ is non-saturated relatively to $f$ in the $\mathscr{H}$-graph $K' := K \cup \mathcal{P} \cup \mathcal{C}$. By Lemma \ref{forest}, there exists a saturated $\mathscr{H}$-graph $\mathcal{G}_1$ that contains $K'$ and such that the quotient $\mathcal{G}_1/K'$ is a forest. Hence, by Lemma \ref{completiond}, there exists a saturated $\mathscr{H}$-preaction $\alpha_1$ that extends both $\beta$ and $\gamma$ and whose $\mathscr{H}$-graph is $\mathcal{G}_1$. As $\mathcal{G}_0$ and $\mathcal{G}_1$ are non-isomorphic (because they do not share the same homotopy type), the (saturated) $\mathscr{H}$-preactions $\alpha_0$ and $\alpha_1$ both extend $\beta$ and their stabilizers in $G$ are not conjugate. We proved that any neighborhood of $\alpha$ (or equivalently, of $H$) contains two non-isomorphic $G$-actions $\alpha_0$, $\alpha_1$ (equivalently, two non-conjugate subgroups $H_0$, $H_1$ of $G$) whose $\mathscr{H}$-graphs $\mathcal{G}_0$, $\mathcal{G}_1$ are infinite (\textit{i.e.}, the quotient graphs $H_0 \backslash \mathcal{T}$, $H_1 \backslash \mathcal{T}$ are infinite). This proves that the space $\{H \leq G \mid H \backslash \mathcal{T} \text{ \ is infinite} \}$ has no isolated point. Thus, this space is included in $\mathcal{K}(G)$.

		Otherwise, our group $G$ is of the form $\mathbb{Z}^d \rtimes \mathbb{F}_r$ (where $r \geq 2$ denotes the number of loops) and each generator of $\mathbb{F}_r$ acts on $\mathbb{Z}^d$ by multiplication by an invertible integer matrix.
		Denoting by $\pi : G = \mathbb{Z}^d \rtimes \mathbb{F}_r \to \mathbb{F}_r$ the canonical surjection, every subgroup $H$ of $G$ is fully determined by its intersection $H_0 = H \cap \mathbb{Z}^d$ with $\mathbb{Z}^d$, its image $\pi(H) \leq \mathbb{F}_r$ under $\pi$ satisfying \[x \cdot H_0 = H_0 \ \forall x \in \pi(H)\]
		and, given a basis $(a_i)_{i \in I}$ of $\pi(H)$ (the set $I$ being countable), elements $(u_i, a_i) \in H$ for every $i \in I$. Notice that in this case, the Bass-Serre tree of $G$ is the Cayley graph of $\mathbb{F}_r$ with respect to the standard generating set, and that for any subgroup $H$ of $G$, the quotient graph $H \backslash \mathcal{T}$ is infinite if and only if $\pi(H) \in \Sub_{[\infty]}(\mathbb{F}_r)$. One distinguishes two cases: 
		
		\begin{enumerate}
			
			\item Let us first assume that $\rk(H_0) = d$. Let us denote by $(\rA, \B) = (\M_{e_0, \src}, \M_{e_0, \trg})$, let us consider an edge $f_0 \neq e_0$ and let us write $(\C, \D) = (\M_{f_0, \src}, \M_{f_0, \trg})$. As the subgroup of $\GL_d(\mathbb{Z})$ generated by $\D\C^{-1}$  acts on the (finite) set of lattices of determinant $\pm \det(H_0)$, there exists an integer $k \in \mathbb{N}^*$ satisfying \[\left(\D\C^{-1}\right)^kH_0 = H_0.\]
			In particular, there exists an $\mathscr{H}$-cycle $\mathcal{C} = E_1,...,E_k$ all of whose edges are of type $f_0$ such that $\src(E_i)$ is labeled $\left(\left(\D\C^{-1}\right)^iH_0, \src(e_0)\right)$ for every $i \in \llbracket 1, k \rrbracket$. As no vertex of $\mathcal{C}$ is saturated relatively to $e_0$, Lemma \ref{eqrel} delivers an $\mathscr{H}$-path $\mathcal{P}$ of type $e_0,...,\overline{e_0}$ that connects $V_0$ to a vertex $V$ of $\mathcal{C}$. Any other vertex of $\mathcal{C}$ is not saturated relatively to $e_0$ in $K' := K \cup \mathcal{P} \cup \mathcal{C}$. Hence we can conclude as in the previous case.         
			\item If $\rk(H_0) < d$, let us write $H_0$ in its Smith normal form: \[H_0 = \mathrm{P} \diag(d_1,\ldots,d_r,0,\ldots,0)\mathbb{Z}^d\] 
			(with $\mathrm{P} \in \GL_d(\mathbb{Z})$, $r < d$ and $d_i |d_{i+1}$ for every $i \in \llbracket 1,r-1 \rrbracket$). For every $N \in \mathbb{N}$, define \[H_0^{(N)} = \mathrm{P}\diag \left(d_1,\ldots,d_r, N\prod_{i=1}^rd_i,\ldots,N\prod_{i=1}^rd_i \right) \mathbb{Z}^d. \]
			
			\begin{claim}
			Every matrix stabilizing the subgroup $H_0$ also stabilizes $H_0^{(N)}$.
			\end{claim}
			\begin{cproof}
			Denoting by $\M$ a matrix stabilizing $H_0$, and by $e_1, \ldots, e_d$ the canonical basis of $\Z^d$, the matrix $\mathrm{P}^{-1}\M\mathrm{P}$ stabilizes $\langle d_1e_1, \ldots, d_re_r\rangle.$ In particular: \begin{align*}
			    &\mathrm{P}^{-1}\M\mathrm{P} \left\langle d_1e_1, \ldots, d_re_r, \left(N\prod_{i=1}^rd_i\right) e_{r+1}, \ldots, \left(N\prod_{i=1}^rd_i\right) e_{d} \right\rangle \\ &\leq \left\langle d_1e_1, \ldots, d_re_r , \left(N\prod_{i=1}^rd_i\right) \mathbb{Z}^d \right\rangle.
			\end{align*}
			Now, $\left(N\prod_{i=1}^rd_i\right) \mathbb{Z}^d \leq \left\langle d_1e_1, \ldots, d_re_r, \left(N\prod_{i=1}^rd_i\right) e_{r+1}, \ldots, \left(N\prod_{i=1}^rd_i\right) e_{d} \right\rangle$: if $j~\leq~r$, then $$\left(N\prod_{i=1}^rd_i \right)e_j = \left(N\prod_{i \neq j}d_i\right) (d_je_j) \in d_j e_j \mathbb{Z}.$$ Thus, $\mathrm{P}^{-1}\M\mathrm{P}$ stabilizes $\left\langle d_1e_1, \ldots, d_re_r, \left(N\prod_{i=1}^rd_i\right) e_{r+1}, \ldots, \left(N\prod_{i=1}^rd_i\right) e_{d} \right\rangle$, hence $\M$ stabilizes $H_0^{(N)}$. 
		    \end{cproof}
		    
			In particular, $H_0^{(N)}$ is $\pi(H)$-stable. Moreover, as $(a_i)_{i \in I}$ is a free basis of $\langle~a_i,~i~\in~I~\rangle$, one has \[\left( \mathbb{Z}^d \times 1 \right) \cap \langle (u_i, a_i), i \in I \rangle = \{1\}.\]
			
			This implies that \begin{align*}
				\widetilde{H}_N &:= \left\langle H_0^{(N)}, (u_i,a_i)_{i \in I} \right\rangle \\
				&= H_0^{(N)} \rtimes \left\langle \left(u_i,a_i\right)_{i \in I} \right\rangle.
			\end{align*} 
			As moreover $H_0^{(N)}$ tends to $H_0$ in $\Sub(\mathbb{Z}^d)$ as $N$ tends to $+\infty$, we deduce that the sequence of subgroups $	\widetilde{H}_N $ converges to $H$ (non-trivially, because $\widetilde{H}_N~\cap \mathbb{Z}^d~=~H_0^{(N)}$). As $\pi\left(\widetilde{H}_N\right) = \pi(H)$, the $\mathscr{H}$-graphs of $\widetilde{H}_N$ and $H$ have isomorphic skeletons (in particular, the subgroups $\widetilde{H}_N$ have infinite $\mathscr{H}$-graphs).
		\end{enumerate}
	\end{proof}
	
\begin{remark}\label{rknoncpct}
Notice that we did not make use of the cocompactness of the action $G \curvearrowright \mathcal{T}$. Thus, Theorem \ref{kerneld} extends to a larger class of groups that include GBS groups of rank $d$, \textit{i.e.} the class of non-amenable groups acting (non necessarily cocompactly) on an oriented tree with vertex and edge stabilizers isomorphic to $\mathbb{Z}^d$. In particular, in the case of a non-cocompact action on the Bass-Serre tree, the perfect kernel consists of the whole set of subgroups $\Sub(G)$. 
\end{remark}

The following corollary gives a class of GBS groups that satisfy the equality $\mathcal{K}(G) = \Sub_{[\infty]}(G)$. Recall that in rank $1$, the authors of \cite{solitar1} and \cite{bontemps} proved that this equality was true for non-unimodular GBS groups only. Let $v$ be a vertex of $\mathscr{H}$ and recall that the modular homomorphism $\Delta_G^{(v)}$ based at $v$ is defined by the following data: \begin{itemize}
    \item $\Delta_G^{(v)}$ is trivial on the vertex groups;
    \item for every edge generator $t_e$, denoting by $e_1,...,e_r$ the unique reduced edge path in $\mathscr{T}$ with source $v$ and target $\src(e)$, and by $e_{r+1},...,e_s$ the unique reduced edge path in $\mathscr{T}$ with source $\trg(e)$ and target $v$, one has \[\Delta_G^{(v)}(t_e) = \M_{e_1, \src}\M_{e_1, \trg}^{-1} \ldots \M_{e_r,\src}\M_{e_r, \trg}^{-1}  \M_{e, \src}\M_{e, \trg}^{-1} \M_{e_{r+1},\src}\M_{e_{r+1}, \trg}^{-1} \ldots \M_{e_s, \src}\M_{e_s, \trg}^{-1}.\]
\end{itemize}
	
	\begin{corollary}\label{cor}
		Let $G$ be a non-amenable and non-unimodular GBS group of rank $d$ such that the finite index subgroups of $\image\left(\Delta_{G}^{(v)}\right)$ are $\mathbb{Q}$-irreducible. Then \[\mathcal{K}(G) = \Sub_{[\infty]}(G).\]
	\end{corollary}
	
	Before proving Corollary \ref{cor}, we give an explicit example of a GBS group $G$ of rank $2$ whose perfect kernel consists of the set of infinite index subgroups of $G$. Let us define $\rA = \begin{pmatrix}
		2 & 2 \\
		2 & 4 \\
	\end{pmatrix}$ and $\B = \begin{pmatrix}
		1 & 0 \\
		0 & 2 \\
	\end{pmatrix}$ and let us define $G=\pi_1(\mathscr{H})$ as the fundamental group of the graph of groups defined in Figure \ref{figcor}. In other words, $G$ is defined by the following presentation: \[G \simeq \left\langle x, y, t \mid xy = yx, t^{-1}x^2y^2t = x, t^{-1}x^2y^4t = y^2 \right\rangle.\]
	
	\begin{figure}[ht]
		\center
		\begin{tikzpicture}
			\node[draw,circle,fill=gray!50] (a) at (0,0) {};
			\draw[>=latex, directed] (a) to [out=135,in=45,looseness=20] node[above]{$e$} node[very near start, below left]{$\rA$} node[very near end, below right]{$\B$} (a);
		\end{tikzpicture}
		\caption{The graph of groups $\mathscr{H}$.}
		\label{figcor}
	\end{figure}
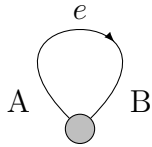
	The image of the modular homomorphism $\Delta_{G}$ is the subgroup of $\GL_2(\mathbb{Q})$ generated by $\rA\B^{-1} = \begin{pmatrix}
		2 & 1 \\
		2 & 2 \\
	\end{pmatrix}$. Hence $\image(\det \circ \Delta_G) = 2^{\mathbb{Z}}$, thus $G$ is not unimodular. Finite index subgroups of $\image(\Delta_G)$ are of the form $\left\langle \left(\rA\B^{-1}\right)^n \right\rangle$ (where $n \in \mathbb{Z} \smallsetminus \{0\}$). Notice that $\Spec_{\mathbb{R}}(\rA\B^{-1}) = \left\{2+\sqrt{2}, 2-\sqrt{2}\right\}$, so $\Spec_{\mathbb{R}}((\rA\B^{-1})^n) = \left\{\left(2+\sqrt{2}\right)^n, \left(2-\sqrt{2}\right)^n\right\}$ for every $n \in \mathbb{Z}$. Thus, for any $n \in \mathbb{Z} \setminus\{0\}$, one has $\Spec_{\mathbb{Q}}((\rA\B^{-1})^n) = \emptyset$, \textit{i.e.} the subgroup $\left\langle \left(\rA\B^{-1}\right)^n \right\rangle$ does not stabilize any non-trivial proper vector subspace of $\mathbb{Q}^2$. Hence, by Corollary \ref{cor}, one has $\mathcal{K}(G) = \Sub_{[\infty]}(G)$. 
	
	Now let us prove Corollary \ref{cor}. To begin with, we prove the following lemma:
	
	\begin{lemma}\label{commensurable}
		Let $H$ be a subgroup of $G$ and let us denote by $\mathcal{G}$ its $\mathscr{H}$-graph. Let $v \in \mathcal{V}(\mathscr{H})$ and let $(\Lambda_0, v)$ be the label of a vertex $V_0$ of $\mathcal{G}$. Then: \begin{align*}\left\{\Delta_{G}^{(v)}(g)\cdot \Lambda_0 \otimes \mathbb{Q} \mid g \in G\right\} &= \left\{\Lambda_0' \otimes \mathbb{Q} \mid \exists \text{a vertex $V \in \mathcal{V}(\mathcal{G})$ labeled $(\Lambda_0',v)$}\right\}  \\
			&= \left\{\Lambda_0'\otimes \mathbb{Q} \mid \text{$\Lambda_0'$ is $\mathscr{H}$-equivalent to $\Lambda_0$ w.r.t. $v$}.\right\} \end{align*}
	\end{lemma}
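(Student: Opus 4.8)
The plan is to show that all three sets coincide up to commensurability by running through the cyclic chain of inclusions: (the set of $\Lambda_1$ occurring as a label of a vertex of $\mathcal{G}$) $\subseteq$ (the set of $\Lambda_1$ that are $\mathscr{H}$-equivalent to $\Lambda_0$ w.r.t.\ $v$) $\subseteq$ (the modular orbit $\{\Delta_G^{(v)}(g)\cdot\Lambda_0 \mid g \in G\}$) $\subseteq$ (the set of $\Lambda_1$ occurring as a label of a vertex of $\mathcal{G}$), where the first inclusion is an honest containment and the other two hold after passing to commensurability classes.

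The engine is one elementary observation, valid in any $\mathscr{H}$-graph of a rank-$d$ GBS group: along an edge labeled $e$ whose source (\textit{resp.}\ target) is labeled $(\Theta_0, \src(e))$ (\textit{resp.}\ $(\Theta_1, \trg(e))$), the subgroup $\Theta_1$ is commensurable to $\M_{e,\trg}\M_{e,\src}^{-1}\Theta_0$. This follows from the Transfer Equation $\M_{e,\src}^{-1}\Theta_0 \cap \mathbb{Z}^d = \M_{e,\trg}^{-1}\Theta_1 \cap \mathbb{Z}^d$ together with the fact that $L \cap \mathbb{Z}^d$ is commensurable to $L$ for every finitely generated subgroup $L \leq \mathbb{Q}^d$ (the quotient $L/(L \cap \mathbb{Z}^d) \hookrightarrow (\mathbb{Q}/\mathbb{Z})^d$ is a finitely generated torsion group, hence finite) and with the fact that multiplication by an element of $GL_d(\mathbb{Q})$ preserves commensurability of subgroups of $\mathbb{Q}^d$. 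Iterating along a path of type $e_1, \dots, e_n$ then shows that the label of its endpoint is commensurable to $\M_{e_n,\trg}\M_{e_n,\src}^{-1} \cdots \M_{e_1,\trg}\M_{e_1,\src}^{-1}$ applied to the label of its origin; when the path is closed at $v$ this product is exactly the value $\Delta_G^{(v)}(g)$ of the modular homomorphism on the corresponding element of $G$, by the explicit formula recalled in Section \ref{gbsd}, and, conversely, every element of $\image \Delta_G^{(v)}$ is such a product of matrices attached to closed paths at $v$ and of their inverses.

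Granting this, the three inclusions become short. For the first, a vertex of $\mathcal{G}$ labeled $(\Lambda_1, v)$ can be joined to $V_0$ by a path inside $\mathcal{G}$, since $\mathcal{G} = H \backslash \mathcal{T}$ is connected; this path, with the labels inherited from $\mathcal{G}$, is a connected $\mathscr{H}$-graph containing vertices labeled $(\Lambda_0, v)$ and $(\Lambda_1, v)$, so $\Lambda_1$ is $\mathscr{H}$-equivalent to $\Lambda_0$ w.r.t.\ $v$ (no loss of commensurability here). For the second, a witness of $\mathscr{H}$-equivalence contains a path of type $e_1, \dots, e_n$ from a vertex labeled $(\Lambda_0, v)$ to one labeled $(\Lambda_1, v)$, which is closed at $v$ because $\src(e_1) = \trg(e_n) = v$; the engine gives $\Lambda_1$ commensurable to $\Delta_G^{(v)}(g) \cdot \Lambda_0$ for the corresponding $g$. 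For the third, write $\Delta_G^{(v)}(g)$ as a product $c_k^{\varepsilon_k} \cdots c_1^{\varepsilon_1}$ of matrices $c_i$ attached to closed paths $\sigma_i$ at $v$ (with $\varepsilon_i \in \{\pm 1\}$); since $\mathcal{G}$ is saturated, one may start at $V_0$ and successively follow inside $\mathcal{G}$ paths of types $\sigma_1^{\varepsilon_1}, \sigma_2^{\varepsilon_2}, \dots, \sigma_k^{\varepsilon_k}$ — at every step saturation furnishes an incident edge of the required type, the relevant index $\left|\mathbb{Z}^d / \langle \Lambda, \M_{e,\src}\mathbb{Z}^d \rangle\right|$ (or its target analogue) being a nonzero integer — and the type-$v$ vertex of $\mathcal{G}$ thus reached carries a label commensurable to $c_k^{\varepsilon_k} \cdots c_1^{\varepsilon_1} \cdot \Lambda_0 = \Delta_G^{(v)}(g) \cdot \Lambda_0$.

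I expect the only genuinely delicate part to be setting up the dictionary in the second paragraph: that the Transfer Equation translates, up to commensurability, into left multiplication by $\M_{e,\trg}\M_{e,\src}^{-1}$, and that the products of such matrices along closed paths based at $v$ are precisely the values taken by $\Delta_G^{(v)}$. Once that is in place, both outer inclusions are just bookkeeping with commensurability classes and with the saturation of $\mathcal{G}$, while the middle inclusion is immediate from the definition of $\mathscr{H}$-equivalence.
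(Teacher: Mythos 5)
Your proposal is correct and follows essentially the same route as the paper: a cyclic chain of inclusions among the three sets, driven by the observation that the Transfer Equation forces the target label of an edge to be commensurable to $\M_{e,\trg}\M_{e,\src}^{-1}$ applied to the source label (the paper phrases this by tensoring with $\mathbb{Q}$), combined with connectedness of $\mathcal{G}$ for one inclusion and its saturation for the inclusion of the modular orbit into the label set. The only difference is the cyclic order in which the inclusions are proved, which is immaterial.
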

	
	\begin{proof}
		Let us prove that \[\left\{\Delta_{G}^{(v)}(g)\cdot \Lambda_0 \otimes \mathbb{Q} \mid g \in G\right\} \subseteq \left\{\Lambda_0'\otimes \mathbb{Q} \mid \exists \text{a vertex $V \in \mathcal{V}(\mathcal{G})$ labeled $(\Lambda_0', v)$}\right\}.\]
		Let $g \in G$. Let us consider a cycle $e_1, ..., e_r$ based at $v$ in ${\mathscr{H}}$ and elements $g_i \in G_{\src(e_i)}$ for every $i$ and $g_{r+1} \in G_v$ such that \[g = g_1 \cdot s_{e_1} \cdot g_2 \cdot ... \cdot s_{e_r} \cdot g_{r+1}\] 
		(where $s_{e_i} = t_{e_i}$ if $e_i \notin \mathscr{T}$ and $1$ otherwise). Denoting by $(\rA_i,\B_i)$ the label of $e_i$ for every $i \in \llbracket 1,r \rrbracket$, one has: \begin{equation}\label{delta}\Delta_{G}^{(v)}(g) = \rA_1 \B_1^{-1} \ldots \rA_r\B_r^{-1}.\end{equation}
		Let us consider an edge path $E_1, ..., E_r$ based at $V_0$ and labeled $\overline{e_r}, ..., \overline{e_1}$ in $\mathcal{G}$. For every $i \in \llbracket 1, r \rrbracket$, let $(\Lambda_i, \trg(e_i))$ be the label of $\trg(E_{i})$. By the Transfer Equation \ref{transfer}, we get \[\B_{r-i}^{-1}\Lambda_{i} \cap \mathbb{Z}^d = \rA_{r-i}^{-1} \Lambda_{i+1} \cap \mathbb{Z}^d\] for every $i \in \llbracket 0,r-1\rrbracket$,
		which implies that  \[\B_{r-i}^{-1}\Lambda_{i} \otimes \mathbb{Q} = \rA_{r-i}^{-1} \Lambda_{i+1} \otimes \mathbb{Q}.\]
		Thus \begin{equation}\Lambda_r \otimes \mathbb{Q} = \rA_1\B_1^{-1} \ldots \rA_r\B_r^{-1} \Lambda_0 \otimes \mathbb{Q}\end{equation}
		which implies, with Equation \ref{delta}, that $\Lambda_0' := \Lambda_r$ satisfies $\Lambda_0' \otimes \mathbb{Q} = \Delta_{G}^{(v)}(g) \cdot \Lambda_0 \otimes \mathbb{Q}$. 
		
		If there exists a vertex $V \in \mathcal{V}(\mathcal{G})$ labeled $(\Lambda_0', v)$, then $\Lambda_0'$ is ${\mathscr{H}}$-equivalent to $\Lambda_0$ w.r.t. $v$ by connectedness of $\mathcal{G}$. This proves that \begin{align*}&\left\{\Lambda_0'\otimes \mathbb{Q} \mid \exists \text{a vertex $V \in \mathcal{V}(\mathcal{G})$ labeled $(\Lambda_0', v)$}\right\} \\ &\subseteq \left\{\Lambda_0' \otimes \mathbb{Q} \mid \text{$\Lambda_0'$ is ${\mathscr{H}}$-equivalent to $\Lambda_0$ w.r.t. $v$}\right\}.\end{align*}

		Finally, let us prove that \[\left\{\Lambda_0'\otimes \mathbb{Q} \mid \text{$\Lambda_0'$ is ${\mathscr{H}}$-equivalent to $\Lambda_0$ w.r.t. $v$}\right\} \subseteq \left\{\Delta_{G}^{(v)}(g)\cdot \Lambda_0 \otimes \mathbb{Q} \mid g \in G\right\}.\]
		Let $\Lambda_0'$ be a subgroup which is ${\mathscr{H}}$-equivalent to $\Lambda_0$ w.r.t. $v$. Let us consider an ${\mathscr{H}}$-path $E_1, ..., E_r$ labeled $e_1, ..., e_r$ whose source is labeled $(\Lambda_0, v)$ and whose target is labeled $(\Lambda_0', v)$. For $i \in \llbracket 1, r \rrbracket$, let us denote by $(\Gamma_i, \src(e_i))$ the label of $\src(E_i)$ (hence $\Gamma_1 = \Lambda_0$). By the Transfer Equation \ref{transfer}, denoting by $(\rA_i, \B_i)$ the label of $e_i$ in ${\mathscr{H}}$ we get, as previously \[\Lambda_0' \otimes \mathbb{Q} = \B_r\rA_r^{-1} \ldots \B_1\rA_1^{-1} \Lambda_0 \otimes \mathbb{Q}.\] Thus, denoting by $g = s_{e_1}...s_{e_r}$ (where $s_{e_i} = t_{e_i}$ if $e_i \notin T$ and $1$ otherwise), one has: \[\Lambda_0' \otimes \mathbb{Q} = \Delta_{G}^{(v)}(g)^{-1} \cdot \Lambda_0 \otimes \mathbb{Q},\]
		which concludes the proof of the lemma.
	\end{proof}
	
	\begin{proof}[Proof of Corollary \ref{cor}]
		Let $H$ be a subgroup of $G$ whose graph of groups is finite. Let us show that $H$ has finite index under the assumptions of the corollary. Let $\mathcal{G}$ be the $\mathscr{H}$-graph of $H$. Let us denote by $(\Lambda_0,v)$ the label of a vertex of $\mathcal{G}$. By Lemma \ref{commensurable}, the orbit of $\Lambda_0 \otimes \mathbb{Q}$ under the action of $G$ \textit{via} $\Delta_{G}^{(v)}$ is finite. the stabilizer \[\{g \in G \mid \Delta_{G}^{(v)}(g) \cdot \Lambda_0 \otimes \mathbb{Q} = \Lambda_0 \otimes \mathbb{Q}\}\] has finite index in $G$. This implies that \begin{itemize}
			\item either $\Lambda_0 \otimes \mathbb{Q} = \mathbb{Q}^d$, \textit{i.e.} $\Lambda_0$ has finite index in $\mathbb{Z}^d$;
			\item or $\Lambda_0 = 0$.
		\end{itemize}
		Let us assume by contradiction that $\Lambda_0 = \{0\}$. By commensurability of the vertex stabilizers, this is equivalent to \[H \cap G_w = \{1\} \text{ \ for every vertex $w \in \mathcal{V}(\mathscr{H})$}\]
		(or equivalently, all the labels of the vertices of $\mathcal{G}$ are trivial). 
		
		As the image of the homomorphism $\left| \det \circ \Delta_{G}^{(v)} \right|$ is non-trivial, there exists a (reduced) cycle of edges $e_1,...,e_n$ in $\mathscr{H}$ based at $v$ and satisfying 
		\[\left|\frac{\prod_{i=1}^n\det(\rA_i)}{\prod_{i=1}^n\det(\B_i)}\right| \neq 1\]
		Let $n_i$ be the number of vertices of $\mathcal{G}$ labeled $\src(e_i)$. Every vertex labeled $\src(e_i)$ (\textit{resp.} $\trg(e_i)$) has $|\det(\rA_i)|$ (\textit{resp.} $|\det(\B_i)|$) outgoing (\textit{resp.} incoming) edges labeled $e_i$. Hence the number of edges labeled $e_i$ in $\mathcal{G}$ is \[n_i |\det(\rA_i)| = n_{i+1} |\det(\B_i)|.\]
		Combining these equalities for $i=1,...,n$, we get \[\prod_{i=1}^n|\det(\rA_i)| = \prod_{i=1}^n|\det(\B_i)|\]
		hence a contradiction. 
		Thus,  $\Lambda_0$ has finite index in $G_{\src(e_1)}$. By commensurability of the vertex stabilisers, this implies that $H \cap G_w$ has finite index in $G_w$ for every $w \in \mathcal{V}(\mathscr{H})$ which implies that $H \backslash G$ is finite (because $\mathcal{G}$ is also finite). 
		
		Using Theorem \ref{kerneld}, we finally deduce the equality \[\mathcal{K}(G) = \Sub_{[\infty]}(G).\]
	\end{proof}
	
	\begin{remark}
		If $G$ is a non-unimodular GBS group such that every non-trivial element of the image of the linear modular homomorphism $\Delta_{G}^{(v)}$ is $\mathbb{Q}$-irreducible, then the assumptions of Corollary \ref{cor} are satisfied. Let us explain why. As $G$ is non-unimodular, there exists $g \in G$ such that $\left|\det\left(\Delta_{G}^{(v)}(g)\right)\right| \neq 1$. In particular, $\Delta_{G}^{(v)}(g)$ has infinite order. Now assume by contradiction that there exists a non-trivial proper vector subspace $V$ of $\mathbb{Q}^d$ that is stabilized by a finite index subgroup $\Gamma$ of $\Delta_{G}^{(v)}(G)$. As $\Gamma$ has finite index, there exists some $n \in \mathbb{N}^*$ such that ${\Delta_G^{(v)}(g)}^n \in \Gamma$. Consequently, $V$ is stabilized by the non-trivial element $\Delta_G^{(v)}(g)^n$ of $\image\left(\Delta_G^{(v)}\right)$, which contradicts the assumption made on $G$.
	\end{remark}
	
	\section{A dynamical partition of the perfect kernel}\label{dynamicalpartition}
	
	The goal of this section is to extend the decomposition of the perfect kernel obtained in \cite{solitar1} and in \cite{bontemps} for non-amenable GBS groups of rank $1$. 
	
	\subsection{\texorpdfstring{Case where $G$ is not a semidirect product $\mathbb{Z}^d \rtimes \mathbb{F}_r$}{Case where $G$ is not a semidirect product $\mathbb{Z}^d \rtimes \mathbb{F}_r$}}
	
	In this section, we assume that $G$ is not a semidirect product $\mathbb{Z}^d \rtimes \mathbb{F}_r$, \textit{i.e.} $\mathscr{H}$ does not consist of a single vertex with a collection of loops all of whose labels are in $\GL_d(\mathbb{Z})$. This allows us to make use of the equivalence relation on $\Sub(\mathbb{Z}^d)$ defined in Section~\ref{eq}.
	
	Let us fix a vertex $v \in \mathcal{V}(\mathscr{H})$. We identify $G_v$ with $\mathbb{Z}^d$. Let us denote by $\simeq$ the $\mathscr{H}$-equivalence relation with respect to $v$ and by $\pi_v : \Sub(\mathbb{Z}^d) \to \Sub(\mathbb{Z}^d)/\simeq$ the canonical projection. Notice that the rank is constant on each fiber of $\pi_v$. We have the following statement: 
	
	\begin{proposition}\label{partitioninfinie}
		The set $\Sub(\mathbb{Z}^d)/\simeq$ is infinite countable. 
	\end{proposition}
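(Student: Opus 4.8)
The statement has two halves, and countability is the easy one: every subgroup of $\mathbb{Z}^d$ is finitely generated, so $\Sub(\mathbb{Z}^d)$ embeds into the countable set $\bigsqcup_{n\ge 0}(\mathbb{Z}^d)^n$ of finite tuples and is itself countable, hence so is any quotient of it, in particular $\Sub(\mathbb{Z}^d)/\simeq$. So the real task is to show that there are infinitely many $\simeq$-classes, and the plan is to exhibit a $\simeq$-invariant taking infinitely many values on $\Sub(\mathbb{Z}^d)$.

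Concretely, I would fix a prime $p$ dividing none of the (finitely many, nonzero) determinants $\det\M_{e,\var}$, $(e,\var)\in\mathcal{E}(\mathscr{H})\times\{\src,\trg\}$, and consider, for a finite-index subgroup $\Lambda\le\mathbb{Z}^d$, the quantity $f_p(\Lambda):=\bigl|[\mathbb{Z}^d:\Lambda]\bigr|_p$. The goal is to prove that $\simeq$ preserves the set of finite-index subgroups and that $f_p$ is constant on each $\simeq$-class contained in it. Granting this, the subgroups $\Lambda_k:=p^k\mathbb{Z}\oplus\mathbb{Z}^{d-1}$ ($k\in\mathbb{N}$) have $f_p(\Lambda_k)=k$, hence lie in pairwise distinct $\simeq$-classes, which finishes the proof.

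The heart of the argument is the behaviour of the finite-index condition and of $f_p$ across a single edge of an $\mathscr{H}$-graph. If $E$ is labeled $e$, with $(\A,\B):=(\M_{e,\src},\M_{e,\trg})$, source labeled $(\Lambda_0,\src(e))$, target labeled $(\Lambda_1,\trg(e))$, and $\Gamma:=(\A^{-1}\Lambda_0)\cap\mathbb{Z}^d=(\B^{-1}\Lambda_1)\cap\mathbb{Z}^d$ by the Transfer Equation \ref{transfer}, then localizing at $p$ makes $\A,\B$ into units of $GL_d(\mathbb{Z}_{(p)})$, so $\Gamma\otimes\mathbb{Z}_{(p)}=\A^{-1}(\Lambda_0\otimes\mathbb{Z}_{(p)})=\B^{-1}(\Lambda_1\otimes\mathbb{Z}_{(p)})$; being $GL_d(\mathbb{Z}_{(p)})$-translates of each other, the three modules $\Lambda_0\otimes\mathbb{Z}_{(p)}$, $\Gamma\otimes\mathbb{Z}_{(p)}$, $\Lambda_1\otimes\mathbb{Z}_{(p)}$ are isomorphic, hence have the same colength; this gives both that $\Lambda_0$, $\Gamma$, $\Lambda_1$ are simultaneously of finite index and that $f_p(\Lambda_0)=f_p(\Gamma)=f_p(\Lambda_1)$ in that case. (The same thing can be checked by hand from $\A\Gamma=\Lambda_0\cap\A\mathbb{Z}^d$ and $[\Lambda_0:\Lambda_0\cap\A\mathbb{Z}^d]\mid|\det\A|$, both prime to $p$.) To pass from this to $\simeq$, I would take, for $\Lambda_0\simeq\Lambda_1$ with respect to $v$, a connected $\mathscr{H}$-graph carrying vertices labeled $(\Lambda_0,v)$ and $(\Lambda_1,v)$ and an edge-path between them, observe that every edge of the path is labeled by some $e\in\mathcal{E}(\mathscr{H})$ whose two matrices are prime to $p$, and run the edge-wise statement inductively along the path (the roles of $\A$ and $\B$ being symmetric, both orientations of an edge are handled). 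This yields that all labels along the path have finite index and share the same value of $f_p$.

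The main obstacle is precisely this edge-wise computation — checking that finiteness of the index and its $p$-valuation survive the Transfer Equation for the chosen $p$; once that is in place, everything else (countability, the explicit family $\Lambda_k$, the propagation along a path) is bookkeeping. As a bonus, the localization argument shows more: for any prime $p$ avoiding all $\det\M_{e,\var}$, the full list of elementary divisors of $\Lambda\otimes\mathbb{Z}_{(p)}$ is a $\simeq$-invariant of an arbitrary $\Lambda\in\Sub(\mathbb{Z}^d)$, which should be the right starting point for a generalized phenotype.
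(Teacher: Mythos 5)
Your proof is correct and follows essentially the same route as the paper: the paper also extracts the $p$-adic valuation of the index at primes not dividing the edge-matrix determinants, shows it is preserved across a single edge via the Transfer Equation (by the same divisibility facts $\det(\A)\Lambda_0 \leq \Lambda_0\cap\A\mathbb{Z}^d\leq\Lambda_0$ that you mention as the ``by hand'' variant of your localization), and propagates it along a path by induction. The only cosmetic differences are that the paper packages all such primes into one function $\delta(\Lambda)=\prod_{p\notin\mathcal{P}_{\mathscr{H}}}p^{|\det(\Lambda)|_p}$ and argues its image is infinite, whereas you fix a single good prime and exhibit the explicit family $p^k\mathbb{Z}\oplus\mathbb{Z}^{d-1}$.
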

	
	\begin{proof}
		As $\Sub(\mathbb{Z}^d)$ is countable, the set $\Sub(\mathbb{Z}^d)/\simeq$ is also countable. Let us show that it is infinite. 
		Let us define the finite subset $\mathcal{P}_{\mathscr{H}}$ of prime numbers \[\mathcal{P}_{\mathscr{H}} = \left\{p \in \mathcal{P} \mid \text{there exists an edge $e \in \mathcal{E}(\mathscr{H})$}, p \mid \det(\M_{e, \trg})\right\}.\]
		Let us define \[\delta : \begin{array}{ccccc}
			\mathcal{L}(\mathbb{Z}^d) & \to & \mathbb{Z} \\
			\Lambda & \mapsto & \prod_{p \notin \mathcal{P}_{\mathscr{H}}}p^{|\det(\Lambda)|_p} \\
		\end{array}.\]
		The image of $\delta$ is exactly the set of integers which are divisible by no element of $\mathcal{P}_{\mathscr{H}}$, hence is infinite. Let us show that $\delta$ is constant on the fibers of $\pi_v$, \textit{i.e.} that we have a factorization \[\xymatrix{
			\mathcal{L}(\mathbb{Z}^d) \ar[r]^{\delta} \ar[rd]^{\pi_v} & \delta(\mathbb{Z})  \\
			{} & \mathcal{L}(\mathbb{Z}^d)/\simeq \ar[u]^{\overline{\delta}} 
		}\]
		which will imply that $\mathcal{L}(\mathbb{Z}^d)/\simeq$ is infinite (hence $\Sub(\mathbb{Z}^d)/ \simeq$ is infinite).
		
		By a straightforward induction on the length of an $\mathscr{H}$-path, it suffices to prove that for any $\mathscr{H}$-edge labeled $e \in \mathcal{E}(\mathscr{H})$ with source labeled $(\Lambda_0, \src(e))$ and target labeled $(\Lambda_1, \trg(e))$, one has $\delta(\Lambda_0) = \delta(\Lambda_1)$. Let $E$ be such an edge. By the Transfer Equation \ref{transfer}, denoting by $(\rA, \B) = (\M_{e, \src}, \M_{e, \trg})$, one has \[\rA^{-1}\Lambda_0 \cap \mathbb{Z}^d = \B^{-1} \Lambda_1 \cap \mathbb{Z}^d.\]
		Let $p \in \mathcal{P} \setminus \mathcal{P}_{\mathscr{H}}.$ We have \[\frac{\det(\Lambda_0 \cap \rA\mathbb{Z}^d)}{\det(A)} = \frac{\det(\Lambda_1 \cap \B\mathbb{Z}^d)}{\det(B)},\]
		hence, as $p \nmid \det(\rA)$ and $p \nmid \det(\B)$: \begin{equation}\label{eqdet}|\det(\Lambda_0 \cap \rA\mathbb{Z}^d)|_p = |\det(\Lambda_1 \cap \B\mathbb{Z}^d)|_p.\end{equation}
		From $\det(\rA)\Lambda_0 \leq \Lambda_0 \cap \rA\mathbb{Z}^d \leq \Lambda_0$, we get \[\det(\Lambda_0) \mid \det(\Lambda_0 \cap \rA\mathbb{Z}^d) \mid \det(\Lambda_0)(\det(\rA))^d.\]
		Thus, as $p \nmid \det(\rA)$: \[|\det(\Lambda_0)|_p = |\det(\Lambda_0 \cap \rA\mathbb{Z}^d)|_p.\]
		Likewise \[|\det(\Lambda_1)|_p = |\det(\Lambda_1 \cap \B\mathbb{Z}^d)|_p,\]
		so by Equation \ref{eqdet} we finally get \[|\det(\Lambda_0)|_p = |\det(\Lambda_1)|_p.\]
		As this is true for any $p \notin \mathcal{P}_{\mathscr{H}}$, this implies that \[\delta(\Lambda_0) = \delta(\Lambda_1)\] as required.
	\end{proof}
	
	Let us define the \textbf{$\mathscr{H}$-phenotype with respect to $v$} as the following function: \[\Ph_{\mathscr{H},v} : \begin{array}{ccccc}
		\Sub(G) & \to & \Sub(\mathbb{Z}^d)/\simeq \\
		H & \mapsto & \pi_v(H \cap G_v)  \\
	\end{array}.\]
	
	\begin{proposition}
		The $\mathscr{H}$-phenotype $\Ph_{\mathscr{H},v}$ is surjective and invariant under conjugation by any element of $G$.
	\end{proposition}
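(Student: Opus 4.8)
The plan is to treat the two assertions separately; neither requires new machinery beyond the $\mathscr{H}$-graph dictionary and the definition of $\mathscr{H}$-equivalence. For \textbf{surjectivity}, recall that by Bass-Serre theory the vertex group $G_v$ embeds into $G$ (this is built into \eqref{vertex}). Thus for any subgroup $\Lambda_0$ of $\mathbb{Z}^d \cong G_v$, viewing $\Lambda_0$ as a subgroup of $G$ we have $\Lambda_0 \cap G_v = \Lambda_0$, whence $\Ph_{\mathscr{H},v}(\Lambda_0) = \pi_v(\Lambda_0)$. Since $\pi_v \colon \Sub(\mathbb{Z}^d) \to \Sub(\mathbb{Z}^d)/{\simeq}$ is by definition onto, every class in $\Sub(\mathbb{Z}^d)/{\simeq}$ is attained, so $\Ph_{\mathscr{H},v}$ is surjective.

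For \textbf{conjugation-invariance}, fix $H \leq G$ and $g \in G$, and write $\mathcal{G}$ for the $\mathscr{H}$-graph of $H$. The plan is to exhibit both $H \cap G_v$ and $(gHg^{-1}) \cap G_v$ as vertex labels of the single graph $\mathcal{G}$, and then invoke the definition of $\mathscr{H}$-equivalence. Concretely, the vertices of $\mathcal{G}$ attached to $v$ are the $G_v$-orbits $HxG_v$ in $H \backslash G$, and the vertex $HxG_v$ carries the label $\bigl([\Stab_{G_v}(Hx)]_{G_v}, v\bigr)$. Since $G_v$ is abelian and $\Stab_{G_v}(Hx) = \{a \in G_v \mid xax^{-1} \in H\} = G_v \cap x^{-1}Hx$, this label is exactly $\bigl(G_v \cap x^{-1}Hx,\, v\bigr)$ and does not depend on the chosen representative $Hx$ of the orbit. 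Taking $x = 1$ shows $\bigl(H \cap G_v,\, v\bigr)$ is a vertex label of $\mathcal{G}$; taking $x = g^{-1}$ shows that $\bigl((gHg^{-1}) \cap G_v,\, v\bigr)$ is also a vertex label of $\mathcal{G}$.

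It remains only to note that $\mathcal{G}$ is connected: by Remark \ref{eqbsh} it has the same underlying vertex and edge set as the quotient graph $H \backslash \mathcal{T}$, which is connected because $\mathcal{T}$ is a tree. Hence the two subgroups $H \cap G_v$ and $(gHg^{-1}) \cap G_v$ both occur as $v$-labels inside one connected $\mathscr{H}$-graph, so by definition they are $\mathscr{H}$-equivalent with respect to $v$; equivalently, $\pi_v(H \cap G_v) = \pi_v\bigl((gHg^{-1}) \cap G_v\bigr)$, which is precisely $\Ph_{\mathscr{H},v}(gHg^{-1}) = \Ph_{\mathscr{H},v}(H)$. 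The only points that need a little care are the identification of the $\mathscr{H}$-graph vertex labels with the stabilizer subgroups $G_v \cap x^{-1}Hx$ (already implicit in the construction of the $\mathscr{H}$-graph of a subgroup) and the connectedness of $\mathcal{G}$; there is no genuine obstacle, and in particular the standing hypothesis that $G$ is not a semidirect product $\mathbb{Z}^d \rtimes \mathbb{F}_r$ is used only to guarantee that $\simeq$ is defined.
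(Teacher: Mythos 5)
Your proof is correct and follows essentially the same route as the paper: surjectivity via the surjectivity of $\pi_v$ composed with $H \mapsto H \cap G_v$ (witnessed by subgroups of $G_v$ itself), and conjugation-invariance by observing that both $H \cap G_v$ and $gHg^{-1} \cap G_v$ occur as labels of $v$-vertices of the connected $\mathscr{H}$-graph of $H$. You merely spell out details the paper leaves implicit (the stabilizer computation $\Stab_{G_v}(Hx) = G_v \cap x^{-1}Hx$ and the connectedness of the $\mathscr{H}$-graph), which is fine.
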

	
	\begin{proof}
		The surjectivity of $\Ph_{\mathscr{H}, v}$ results from the surjectivity of $\pi_v$ and of the function $\begin{array}{ccccc}
			\Sub(G) & \to & \Sub(G_v) \\
			H & \mapsto & H \cap G_v  \\
		\end{array}.$
		
		Let $H$ be a subgroup of $G$ and $g \in G$. Let $\Lambda_0 := H \cap G_v \leq \mathbb{Z}^d$ and $\Lambda_1 := gHg^{-1} \cap G_v \leq \mathbb{Z}^d$. Let $\mathcal{G}$ be the $\mathscr{H}$-graph of $H$. By definition, there exist two vertices labeled $(\Lambda_0, v)$ and $(\Lambda_1, v)$ in $\mathcal{G}$. Hence, by connectedness of $\mathcal{G}$, one has $\pi_v(\Lambda_0) = \pi_v(\Lambda_1)$.
	\end{proof}

    As the group $G_v$ acts non-cocompactly on the Bass-Serre tree $\mathcal{T}$, any subgroup of $G_v$ lies in the perfect kernel $\mathcal{K}(G)$ by Theorem \ref{kerneld}. In particular, the restriction $\begin{array}{ccccc}
			\mathcal{K}(G) & \to & \Sub(G_v)/\simeq \\
			H & \mapsto & \pi_v(H \cap G_v)  \\
		\end{array}$ remains surjective and invariant under conjugation.

    This function leads to a dynamical partition of the perfect kernel \begin{equation}\label{dynpartperfker}\mathcal{K}(G) = \bigsqcup_{\Lambda \leq \mathbb{Z}^d}\mathcal{K}(G) \cap\Ph_{\mathscr{H},v}^{-1}(\pi_v(\Lambda)).\end{equation} By the previous remark and Proposition \ref{partitioninfinie}, this partition is infinite countable. Now we are able to prove Theorem \ref{decomposition} which gives a description of the dynamics induced on each piece of the aforementioned decomposition: 
	
	\begin{theorem}\label{dynamic}
		For any $\Lambda_0 \leq \mathbb{Z}^d$, there exists a dense orbit in $\Ph_{\mathscr{H},v}^{-1}(\pi_v(\Lambda_0)) \cap \mathcal{K}(G)$.
	\end{theorem}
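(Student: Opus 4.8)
The plan is to produce, for a fixed $\Lambda_0\le\mathbb{Z}^d$, a single subgroup $H^\star$ lying in the piece $P:=\Ph_{\mathscr{H},v}^{-1}(\pi_v(\Lambda_0))\cap\mathcal{K}(G)$ whose $G$-orbit is dense in $P$; since the $\mathscr{H}$-phenotype and $\mathcal{K}(G)$ are conjugation-invariant, the orbit automatically stays inside $P$, and as $P$ need not be Polish one really wants such an explicit $H^\star$ rather than a Baire-category argument. I would use the dictionary of Section~\ref{generalsetting}: subgroups correspond to saturated connected $\mathscr{H}$-graphs, conjugating a subgroup amounts to moving the basepoint inside its $\mathscr{H}$-graph, and two subgroups whose pointed $\mathscr{H}$-graphs agree on a large enough ball around the basepoint agree on membership of all elements of bounded length (the graphs-of-groups analogue of the Schreier-graph statement used in \cite{bontemps} and \cite{solitar1}). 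Since $\Sub(G)$ is second countable, fix an enumeration $(\mathcal{V}(O_n,I_n))_{n\ge1}$ of the basic clopen sets that meet $P$, pick $H_n\in\mathcal{V}(O_n,I_n)\cap P$ with $\mathscr{H}$-graph $\mathcal{G}_n$ pointed at its canonical basepoint $b_n$, choose $R_n$ so that $\mathcal{V}(O_n,I_n)$-membership is read off the ball $B(b_n,R_n)$ in $\mathcal{G}_n$, and let $K_n\subseteq\mathcal{G}_n$ be the ball of radius $R_n+1$ around $b_n$, enlarged so as to contain a vertex labeled $(\Gamma,v)$ for some $\Gamma$ that is $\mathscr{H}$-equivalent to $\Lambda_0$ w.r.t.\ $v$. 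Because $\mathcal{G}_n$ is saturated and infinite ($H_n\in\mathcal{K}(G)$, Theorem~\ref{kernel}), every vertex of $K_n$ within distance $R_n$ of $b_n$ is already saturated inside $K_n$, while $K_n$ keeps non-saturated boundary vertices; moreover every vertex of $K_n$ pushes along a $\mathscr{H}$-path to a vertex whose label is $\mathscr{H}$-equivalent to $\Lambda_0$ w.r.t.\ $v$ (Corollary~\ref{coreq}).

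Next I would assemble $H^\star$. For each $n$ Lemma~\ref{cycle} furnishes a finite, non-simply-connected $\mathscr{H}$-graph $\mathcal{C}_n$ carrying a vertex labeled $(\Lambda_0,v)$ and at least two non-saturated vertices. Using Lemma~\ref{eqrel} and Corollary~\ref{coreq}, glue the pieces along a ray $K_1-\mathcal{C}_1-K_2-\mathcal{C}_2-\cdots$, each connection being a $\mathscr{H}$-path built from the non-saturated vertices supplied by the $K_n$'s and $\mathcal{C}_n$'s, with the prescribed first/last edges chosen so as never to repeat a half-edge label at a common source or target and never to touch the balls $B(b_n,R_n)$; this yields a connected infinite $\mathscr{H}$-graph $\mathcal{G}^\star\supseteq\bigsqcup_n(K_n\sqcup\mathcal{C}_n)$ for which $\mathcal{G}^\star/\bigsqcup_n(K_n\sqcup\mathcal{C}_n)$ is a tree (all cycles are inside the listed pieces). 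Saturating by Lemma~\ref{forest} gives a connected saturated $\widetilde{\mathcal{G}}\supseteq\mathcal{G}^\star$ with $\widetilde{\mathcal{G}}/\mathcal{G}^\star$ an infinite forest, so $\widetilde{\mathcal{G}}/\bigsqcup_n(K_n\sqcup\mathcal{C}_n)$ is still a tree; then Lemma~\ref{fini} produces preactions with $\mathscr{H}$-graphs $K_n,\mathcal{C}_n$ and Lemma~\ref{completion} produces a transitive $G$-action with $\mathscr{H}$-graph $\widetilde{\mathcal{G}}$, i.e.\ a subgroup $H^\star$. It is infinite, hence $H^\star\in\mathcal{K}(G)$ by Theorem~\ref{kernel}; and since $\widetilde{\mathcal{G}}$ is connected and contains a vertex labeled by a subgroup $\mathscr{H}$-equivalent to $\Lambda_0$, all of its $v$-vertices --- in particular $H^\star\cap G_v$ --- are $\mathscr{H}$-equivalent to $\Lambda_0$ w.r.t.\ $v$, so $H^\star\in P$.

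Finally, for each $n$ let $H_n^\star$ be the conjugate of $H^\star$ obtained by taking as basepoint the copy of $b_n$ inside $K_n\subseteq\widetilde{\mathcal{G}}$. Neither the gluing nor the saturation altered $B(b_n,R_n)$, so the pointed $\mathscr{H}$-graph of $H_n^\star$ agrees with that of $H_n$ on that ball; hence $H_n^\star\in\mathcal{V}(O_n,I_n)$, and $H_n^\star\in P$ by $G$-invariance. Since $\{\mathcal{V}(O_n,I_n)\cap P\}_{n\ge1}$ is a basis of the topology of $P$, the orbit of $H^\star$ meets every non-empty open subset of $P$ and is therefore dense. The step I expect to be the main obstacle is the assembly: maintaining, throughout the inductive gluing and the final saturation, a perpetual reserve of suitably typed non-saturated vertices lying outside all the protected balls, so that the $K_n$ can be connected without ever producing a forbidden repetition of half-edge labels and while keeping the quotient a tree as required by Lemma~\ref{completion}. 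This is exactly where Lemma~\ref{cycle} --- which relies on non-amenability and on $G\not\simeq\mathbb{Z}^d\rtimes\mathbb{F}_r$ --- and the prepend/append flexibility of Lemma~\ref{eqrel} are needed, and it is the part that most closely mirrors, with heavier bookkeeping, the rank-$1$ arguments of \cite{bontemps} and \cite{solitar1}.
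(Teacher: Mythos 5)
Your proposal is correct and follows essentially the same route as the paper's proof: glue finite approximations of a countable family of subgroups of the piece along the $\mathscr{H}$-paths supplied by Lemma \ref{eqrel} (and Corollary \ref{coreq}), saturate via Lemma \ref{forest}, complete via Lemma \ref{completion}, and obtain density by moving the basepoint, i.e.\ by conjugation. The differences are only cosmetic: the paper enumerates the finitely generated subgroups of the piece and takes finite subpreactions that determine them, where you enumerate basic clopen sets and protect saturated balls (a slightly more explicit certification of membership), and the auxiliary cycles of Lemma \ref{cycle} that you insert are not needed in this argument.
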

	
	\begin{proof}
		Let $\Lambda \leq \mathbb{Z}^d$ and let $(H_i)_{i \in  \mathbb{N}} \in \left(\mathcal{K}(G) \cap \Ph_{\mathscr{H},v}^{-1}(\pi_v(\Lambda))\right)^{\mathbb{N}}$ be the sequence of finitely generated subgroups lying in $\mathcal{K}(G) \cap \Ph_{\mathscr{H},v}^{-1}(\pi_v(\Lambda))$. For $i \in \mathbb{N}$, let $\alpha_i$ be the saturated $\mathscr{H}$-preaction associated to $H_i$ and let $\mathcal{G}_i$ be the $\mathscr{H}$-graph of $\alpha_i$. Let $\beta_i$ be a sub-$\mathscr{H}$-preaction of $\alpha_i$ whose stabilizer is $H_i$ and whose $\mathscr{H}$-graph $K_i$ is finite (legit, because $H_i$ is finitely generated). As $H_i \in \mathcal{K}(G)$, Theorem \ref{kerneld} implies that there exists a vertex $V_i \in \mathcal{V}(K_i)$ labeled $(\Lambda_i, v_i)$ which is non-saturated relatively to some edge $e_i \in \mathcal{E}(\mathscr{H})$. Up to extending $\beta_i$, one can assume that $v_i = v$. In particular, the subgroups $(\Lambda_i)_{i \in \mathbb{N}}$ of $\mathbb{Z}^d$ are pairwise $\mathscr{H}$-equivalent w.r.t. $v$ so by Lemma \ref{eqrel}, there exists an $\mathscr{H}$-path $E_{1,i}, ..., E_{r_i,i}$ labeled $e_i, ..., \overline{e_{i+1}}$ that connects $V_i$ to $V_{i+1}$. Denoting by $\mathcal{G}$ the resulting $\mathscr{H}$-graph, Lemma \ref{forest} delivers an $\mathscr{H}$-graph $\mathcal{F}$ \begin{itemize}
			\item that contains $(K_i)_{i \in \mathbb{N}}$ as disjoint subgraphs;
			\item such that the quotient $\mathcal{F} / \bigsqcup_{i \in \mathbb{N}}K_i$ is a forest.
		\end{itemize}
		Hence, by Lemma \ref{completiond}, there exists a saturated $\mathscr{H}$-preaction $\beta$ that extends $\beta_i$ for every $i \in \mathbb{N}$ and whose $\mathscr{H}$-graph is $\mathcal{F}$. This proves that the conjugacy class of the subgroup of $G$ which is associated to $\beta$ is dense in $\mathcal{K}(G) \cap \Ph_{\mathscr{H},v}^{-1}(\pi_v(\Lambda))$. 
	\end{proof}
	
	Now we study the topology of the pieces of the partition \[\Sub(G) = \bigsqcup_{\Lambda \leq \mathbb{Z}^d}\Ph_{\mathscr{H},v}^{-1}(\pi_v(\Lambda)).\]
	
	\begin{proposition}\label{topopiece}
		For any $\Lambda_0 \leq \mathbb{Z}^d$:
		\begin{enumerate}
			\item if $\Lambda_0$ has finite index in $\mathbb{Z}^d$, then the fiber $\Ph_{\mathscr{H},v}^{-1}(\pi_v(\Lambda_0))$ is open;
			\item $\Ph_{\mathscr{H},v}^{-1}(\pi_v(\Lambda_0))$ is an $F_{\sigma}$;
			\item $\Ph_{\mathscr{H},v}^{-1}(\pi_v(\{\Lambda_0\}))$ is closed if and only if $\{\Lambda_1 \leq \mathbb{Z}^d \mid \Lambda_1 \simeq \Lambda_0 \}$ is finite. 
		\end{enumerate}
	\end{proposition}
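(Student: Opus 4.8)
The whole statement should follow from the elementary sets $\{H\le G\mid H\cap G_v=\Lambda_1\}$ furnished by Lemma~\ref{topgen}, together with soft compactness arguments inside $\Sub(\mathbb{Z}^d)$. The starting point is the decomposition
\[
\Ph_{\mathscr{H},v}^{-1}(\pi_v(\Lambda_0))\;=\;\bigsqcup_{\Lambda_1\simeq\Lambda_0}\{H\le G\mid H\cap G_v=\Lambda_1\},
\]
the union being taken over all $\Lambda_1\le\mathbb{Z}^d$ with $\Lambda_1\simeq\Lambda_0$, and being countable since $\Sub(\mathbb{Z}^d)$ is countable. By Lemma~\ref{topgen} each term is closed in $\Sub(G)$, and it is moreover open when $\Lambda_1$ has finite index in $\mathbb{Z}^d$ (note that any $\Lambda_1\le\mathbb{Z}^d$ is automatically finitely generated). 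This gives item~(2) at once (the fiber is a countable union of closed sets), and item~(1): if $\Lambda_0$ has finite index then so does every $\Lambda_1\simeq\Lambda_0$, because the rank is constant on the fibers of $\pi_v$, so the fiber is a union of open sets. Likewise the ``if'' direction of item~(3) is immediate: a finite $\simeq$-class gives a finite union of closed sets, hence a closed fiber.

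For the ``only if'' direction of~(3) I would argue by contraposition: assuming the $\simeq$-class $C$ of $\Lambda_0$ is infinite, I will exhibit a sequence in the fiber converging to a subgroup outside it. Let $r$ be the common rank of the members of $C$ (the rank being a $\simeq$-invariant), choose pairwise distinct $\Lambda_n\in C$, and regard each $\Lambda_n$ as a subgroup of $G$ contained in $G_v$, so that $\Lambda_n\cap G_v=\Lambda_n$ and $\Lambda_n$ lies in the fiber. By compactness of $\Sub(G)$ we may, after passing to a subsequence, assume $\Lambda_n\to H$ for some $H\le G$. Since each $\Lambda_n$ is contained in $G_v$, no element of $G\setminus G_v$ lies in any $\Lambda_n$, hence none lies in $H$, so $H\le G_v$ and in fact $\Lambda_n\to H$ already in $\Sub(\mathbb{Z}^d)$.

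The one point requiring an actual argument — the crux of the proof — is then the lattice-theoretic claim that a sequence of pairwise distinct subgroups of $\mathbb{Z}^d$ all of rank $r$ cannot converge in $\Sub(\mathbb{Z}^d)$ to a subgroup of rank $\ge r$. I would prove it as follows: if $\Lambda_n\to H$ and $s:=\rk(H)$, then $s$ linearly independent elements of $H$ eventually lie in all $\Lambda_n$, forcing $s\le r$; if moreover $s=r$, these elements generate a subgroup $L'$ of finite index in the saturation $L:=\mathbb{Z}^d\cap(H\otimes\mathbb{Q})$ of $H$, and for $n$ large one has $L'\le\Lambda_n$ together with $\rk(\Lambda_n)=\rk(L)$, which forces $\Lambda_n\le L$; so $L'\le\Lambda_n\le L$ for all large $n$, leaving only finitely many possibilities for $\Lambda_n$, so the convergent sequence $(\Lambda_n)$ is eventually constant, contradicting distinctness. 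Granting the claim, $\rk(H)<r=\rk(\Lambda_0)$, hence $H\not\simeq\Lambda_0$, i.e. $H\notin\Ph_{\mathscr{H},v}^{-1}(\pi_v(\Lambda_0))$, which shows the fiber is not closed and finishes the contrapositive. Beyond this, everything is formal; the only things to handle carefully are that the $\Lambda_n$ genuinely share a common rank (this is free, as they lie in one $\simeq$-class) and that the reduction from $\Sub(G)$ to $\Sub(\mathbb{Z}^d)$ is carried out cleanly.
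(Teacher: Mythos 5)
Your proof is correct. Items (1), (2) and the ``if'' half of (3) coincide with the paper's argument: the same decomposition of the fiber into the sets $\{H\le G\mid H\cap G_v=\Lambda_1\}$ for $\Lambda_1\simeq\Lambda_0$, plus Lemma \ref{topgen}, together with the constancy of the rank on $\simeq$-classes for item (1). The divergence is in the ``only if'' half of (3). The paper stays within the $\mathscr{H}$-graph machinery: starting from a sequence $(\Lambda_n)$ in the class of $\Lambda_0$ converging to a subgroup of strictly smaller rank, it invokes Corollary \ref{coreq}, Lemma \ref{forest} and Lemma \ref{completion} to build a single subgroup $H\le G$ and elements $g_n\in G$ with $g_nHg_n^{-1}\cap G_v=\Lambda_n$, and then lets a subsequence of these conjugates converge outside the fiber; in particular the failure of closedness is witnessed by one $G$-orbit, in keeping with the dynamical theme of the section. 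You instead take the $\Lambda_n$ themselves, viewed as subgroups of $G$ contained in $G_v$: they already lie in the fiber, a Chabauty limit of them stays inside $G_v$, and everything reduces to your lattice-theoretic claim that pairwise distinct rank-$r$ subgroups of $\mathbb{Z}^d$ can only accumulate on a subgroup of rank $<r$. This is more elementary (no $\mathscr{H}$-graph construction is needed for this step), and it has the merit of proving in detail the rank-drop fact that the paper uses implicitly when it asserts the existence of a sequence in $\pi_v^{-1}(\pi_v(\Lambda_0))$ converging to a subgroup of smaller rank; your argument for that claim (eventual containment of finitely many independent vectors giving $s\le r$, then $L'\le\Lambda_n\le L$ with $[L:L']<\infty$ forcing eventual constancy) is sound. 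The only thing you lose relative to the paper is the extra information that the accumulating sequence can be taken inside a single conjugacy class, which the proposition as stated does not require.
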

	
	\begin{remark}
		In particular, $\Ph_{\mathscr{H},v}^{-1}(\pi_v(\{0\}))$ is closed. 
	\end{remark}
	
	\begin{remark}
		If the image of the modular homomorphism is trivial, then the third item of Lemma \ref{topopiece} together with Lemma \ref{commensurable} implies that all pieces are closed.
	\end{remark}
	
	\begin{proof}[Proof of Proposition \ref{topopiece}]
		Notice that we have \[\Ph_{\mathscr{H},v}^{-1}(\pi_v(\Lambda_0)) = \bigcup_{\Lambda_1 \simeq \Lambda_0}\left\{ H \leq G \mid H \cap G_v = \Lambda_1 \right\}.\]
		By Lemma \ref{topgen}, this is an $F_{\sigma}$ as a countable union of closed subsets of $\Sub(G)$. Hence, we get the second point.
		
		If $\Lambda_0$ has finite index in $\mathbb{Z}^d$, this is an open subset of $\Sub(G)$ as a union of open sets by Lemma \ref{topgen}. This proves the first point.
		
		If $\{\Lambda_1 \leq \mathbb{Z}^d \mid \Lambda_1 \simeq \Lambda_0 \}$ is finite, then $\Ph_{\mathscr{H},v}^{-1}(\pi_v(\Lambda_0))$ is closed as a finite union of closed subsets by Lemma \ref{topgen}. Otherwise, there exists a sequence of subgroups $(\Lambda_n)_{n\in \mathbb{N}} \in (\pi_v^{-1}(\pi_v(\Lambda_0)))^{\mathbb{N}}$ that converges to a subgroup $\Lambda \leq \mathbb{Z}^d$ whose rank is strictly less than $\rk(\Lambda_0)$. By a straightforward induction using Corollary \ref{coreq}, there exists an $\mathscr{H}$-path $E_1, ..., E_n, ...$ such that: for every $n \in \mathbb{N}$, there exists $k_n \in \mathbb{N}$ such that $\src\left(E_{k_n}\right)$ is labeled $(\Lambda_n, v)$. Thus, by Lemma \ref{forest}, there exists a saturated $\mathscr{H}$-graph which is a forest that contains $E_1, ..., E_n, ...$ as a sub-$\mathscr{H}$-graph. Hence by Lemma \ref{completiond}, there exists a subgroup $H \leq G$ and elements $g_n \in G$ such that $g_n H g_n^{-1} \cap G_v = \Lambda_n$ for every $n \in \mathbb{N}$. Up to extracting, the sequence $g_n H g_n^{-1}$ converges to a subgroup $K \leq G$ that satisfies $K \cap G_v = \Lambda$. In particular, as the rank is constant on the fibers of $\pi_v$, one has $K \notin \Ph_{\mathscr{H},v}^{-1}(\pi_v(\Lambda))$, which proves that $\Ph_{\mathscr{H},v}^{-1}(\pi_v(\Lambda_0))$ is not closed.
	\end{proof}
	
	\begin{remark}
	The definition of the equivalence relation $\simeq$ still makes sense if the graph $\mathscr{H}$ is infinite and the same proof extends to the class of groups that act (non necessarily cocompactly) on an oriented tree with vertex and edge stabilizers isomorphic to $\mathbb{Z}^d$. In the case of a non-cocompact action on the Bass-Serre tree, we thus obtain a dynamical partition of the whole set of subgroups by Remark \ref{rknoncpct}.
	\end{remark}
	
	\subsection{\texorpdfstring{Case where $G = \mathbb{Z}^d \rtimes \mathbb{F}_r$}{Case where $G = \mathbb{Z}^d \rtimes \mathbb{F}_r$}}\label{semidirect}
	
	Now we assume that $\mathscr{H}$ consists of a collection of $r$ loops $e_1,...,e_r$ based at a single vertex $v$ such that, for every $i \in \llbracket 1,r \rrbracket$, the label $(\rA_i,\B_i)$ of $e_i$ satisfies: $\rA_i \in \GL_d(\mathbb{Z})$ and $\B_i \in \GL_d(\mathbb{Z})$. Denoting by \[\rP_i := \rA_i\B_i^{-1},\]
	the group $G$ is isomorphic to the semidirect product of $G_v = \mathbb{Z}^d$ with the free group $\mathbb{F}_r = \langle a_1,...,a_r \rangle$ of rank $r$, where the generator $a_i$ acts on $\mathbb{Z}^d$ by multiplication by $\rP_i$. Let us denote by $\rho : \mathbb{F}_r \to \GL_d(\mathbb{Z})$ the homomorphism that satisfies $\rho(a_i) = \rP_i$ for every $i \in \llbracket 1, r \rrbracket$ and by $\Gamma = \rho(\mathbb{F}_r)$. Observe that this $\mathbb{F}_r$-action induces an $\mathbb{F}_r$-action on $\Sub(G_v)$ defined as follows: for any subgroup $\Lambda \leq G_v$ and any $\gamma \in \mathbb{F}_r$: \begin{align*}
		\gamma \cdot \Lambda &= \rho(\gamma)\Lambda \\
		&= (u, \gamma)\Lambda(u, \gamma)^{-1} \ \forall u \in \mathbb{Z}^d 
	\end{align*}
	(when identifying $G_v$ with $G_v \times \{1\}$ in $G_v \rtimes \mathbb{F}_r$).
	\begin{remark}
		If $\Lambda$ is a finite index subgroup of $G_v$, then for every $\gamma \in \mathbb{F}_r$, one has $|\det(\rho(\gamma)\Lambda)| = |\det(\Lambda)|$. As there exist only finitely many lattices of $\mathbb{Z}^d$ of a given determinant, the orbit $\mathbb{F}_r \cdot \Lambda = \{\rho(\gamma)\Lambda \mid \gamma \in \mathbb{F}_r\}$ is finite.
	\end{remark}
	
	Let us denote by $\pi : \mathbb{Z}^d \rtimes \mathbb{F}_r \to \mathbb{F}_r$ the projection. 
	
	Our goal is to decompose the perfect kernel of $G$ into countably many pieces on which the action by conjugation contains a dense orbit. Let us recall that (by Theorem \ref{kerneld}), one has \[\mathcal{K}(G) = \{H \leq G \mid \pi(H) \in \Sub_{[\infty]}(\mathbb{F}_r)\}.\]
	Notice that, as the subgroup $G_v$ is normal, denoting by $Conj(G_v)$ the set of classes for the action of $G$ on $\Sub(G_v)$ by conjugation, the following partition \begin{equation}\label{partition}\Sub(G) = \bigsqcup_{\mathscr{C} \in Conj(G_v)}\{H \leq G \mid H \cap G_v \in \mathscr{C} \}\end{equation} is $G$-invariant. Denoting by $\mathscr{C} = \{\rho(\gamma)\Lambda_0, \gamma \in \mathbb{F}_r\}$ for some $\Lambda_0 \leq G_v$, we get that \[\{H \leq G \mid H \cap G_v \in \mathscr{C} \} = \bigcup_{\gamma \in \mathbb{F}_r} \{H \leq G \mid H \cap G_v = \rho(\gamma)\Lambda_0\}.\] Notice that $|\det|$ is constant on $\mathscr{C}$ for every $\mathscr{C} \in Conj(G_v)$. In particular, there are infinitely many pieces in the decomposition \eqref{partition}.
	
	This partition leads to a $G$-invariant partition of the perfect kernel into countably many pieces \[\mathcal{K}(G) = \bigsqcup_{\mathscr{C} \in Conj(G_v)}\{H \in \mathcal{K}(G) \mid H \cap G_v \in \mathscr{C} \}.\]
    Notice that this is exactly the decomposition \eqref{dynpartperfker} we obtained in the previous case of a GBS group $G$ which is not $\mathbb{Z}^d$-by-free: two subgroups $\Lambda_0$, $\Lambda_1$ of $G_v$ can arise as the labels of two vertices of some connected $\mathscr{H}$-graph if and only if there exists some $\gamma \in \mathbb{F}_r$ such that $\rho(\gamma)\Lambda_0=\Lambda_1$, or equivalently, if and only if $\Lambda_0$ and $\Lambda_1$ belong to the same orbit under the $G$-conjugation. However, the proof of Theorem \ref{dynamic} does not extend to our new setting, because the relation $\simeq$ need not be transitive anymore. This difficulty turns out to be a real obstruction: the conjugation action need not be topologically transitive on each piece in our new setting. This comes from the fact that the skeleton of the $\mathscr{H}$-graph of a subgroup of $G$ is related to its intersection with the vertex group $G_v$ as follows: 
	\begin{lemma}\label{stabd}
		Let $H$ be a subgroup of $G$. Then $\pi(H) \leq \Stab_{\mathbb{F}_r}(H \cap G_v)$.
	\end{lemma}
	\begin{proof}
		Let $\gamma \in \pi(H)$. There exists $u \in \mathbb{Z}^d$ such that $(u, \gamma) \in H$. For any $(w, 1) \in H \cap G_v$, one has \begin{align*}
			(u, \gamma)(w,1)(u,\gamma)^{-1} &= (\rho(\gamma)(w),1) \\
			&\in H
		\end{align*}
		which implies that $\rho(\gamma)(w) \in H \cap G_v$. Consequently, $\rho(\gamma)(H\cap G_v) = H \cap G_v$ thus $\gamma \in \Stab_{\mathbb{F}_r}(H \cap G_v)$. Hence, $\pi(H) \leq \Stab_{\mathbb{F}_r}(H \cap G_v)$.
	\end{proof}
	
	Let us fix $\mathscr{C} \in Conj(G_v)$. Lemma \ref{stabd} allows us to decompose \begin{align*}\mathcal{P}_{\mathscr{C}} :&= \{H \in \mathcal{K}(G) \mid H \cap G_v \in \mathscr{C} \} \\ &= \{H \in \mathcal{K}(G) \mid H\cap G_v \in \mathscr{C} \text{ \ and \ } \pi(H) \in \mathcal{K}(\Stab_{\mathbb{F}_r}(H \cap G_v))  \} \\ &\bigsqcup \{H \in \mathcal{K}(G) \mid H \cap G_v \in \mathscr{C} \text{ \ and \ } \pi(H) \notin \mathcal{K}(\Stab_{\mathbb{F}_r}(H \cap G_v))\},  \end{align*} each of these two pieces being invariant under $G$-conjugation.
	Notice that the second piece \[\mathcal{D}_{\mathscr{C}} = \{H \in \mathcal{K}(G) \mid H \cap G_v \in \mathscr{C} \text{ \ and \ } \pi(H) \notin \mathcal{K}(\Stab_{\mathbb{F}_r}(H \cap G_v))\}\] of this last decomposition is always countable and open for the induced topology on $\mathcal{P}_{\mathscr{C}}$. 
	\begin{remark}\label{disjonction}
	More precisely, two cases can occur: \begin{itemize}
		\item If $\Stab_{\mathbb{F}_r}(\Lambda_0)$ is infinitely generated or $\Stab_{\mathbb{F}_r}(\Lambda_0)$ has finite index in $\mathbb{F}_r$ (for some, equivalently for all $\Lambda_0 \in \mathscr{C}$), then $\mathcal{D}_{\mathscr{C}}$ is empty;
		\item Otherwise, it consists of \[\{H \in \mathcal{K}(G) \mid H \cap G_v \in \mathscr{C} \text{ \ and \ } \pi(H) \notin \Sub_{[\infty]}(\Stab_{\mathbb{F}_r}(H \cap G_v))\}\] if $\Stab_{\mathbb{F}_r}(\Lambda_0)$ is not infinite cyclic, and of $\mathcal{P}_{\mathscr{C}}$ otherwise.
	\end{itemize}
	\end{remark}
	\begin{lemma}\label{denseorbitsemidirect}
		For every $\mathscr{C} \in Conj(G_v)$, there exists a dense orbit in \[\mathcal{P}_{\mathscr{C}} \setminus \mathcal{D}_{\mathscr{C}} = \{H \in \mathcal{K}(G) \mid H\cap G_v \in \mathscr{C} \text{ \ and \ } \pi(H) \in \mathcal{K}(\Stab_{\mathbb{F}_r}(H \cap G_v))  \}.\] 
	\end{lemma}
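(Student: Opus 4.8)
The plan is to reduce, by conjugation, to a density statement about finitely generated subgroups, and then to produce the dense orbit by the covering-space technique of Remark~\ref{ideafreegp}, keeping track of the $\mathbb{Z}^d$-direction by a decoration. Fix $\Lambda_0\in\mathscr{C}$ and put $S:=\Stab_{\mathbb{F}_r}(\Lambda_0)$, which is free by Nielsen--Schreier. If $\mathcal{P}_{\mathscr{C}}\setminus\mathcal{D}_{\mathscr{C}}=\emptyset$ there is nothing to prove; by Remark~\ref{disjonction} the only remaining possibilities are that $S$ is free of rank $\geq 2$, where $\mathcal{K}(S)=\Sub_{[\infty]}(S)$, or that $S$ is infinitely generated, where $\mathcal{K}(S)=\Sub(S)$, both by Proposition~\ref{freegroup}. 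Since $G_v$ is normal in $G$, conjugation by any $(w,\eta)$ with $\eta\in S$ fixes $\Lambda_0$, so $\mathbb{Z}^d\rtimes S$ preserves $\mathcal{Q}:=\{H\in\mathcal{P}_{\mathscr{C}}\setminus\mathcal{D}_{\mathscr{C}}\mid H\cap G_v=\Lambda_0\}$; and every element of $\mathcal{P}_{\mathscr{C}}\setminus\mathcal{D}_{\mathscr{C}}$ is $G$-conjugate into $\mathcal{Q}$ because $\mathscr{C}$ is a single $G$-conjugacy class and $G_v$ is normal. As $G$ acts on $\Sub(G)$ by homeomorphisms, it therefore suffices to find $H_*\in\mathcal{Q}$ whose $(\mathbb{Z}^d\rtimes S)$-orbit is dense in $\mathcal{Q}$. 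I would also first record that the finitely generated members of $\mathcal{Q}$ are dense in $\mathcal{Q}$: given $H\in\mathcal{Q}$ and a basic clopen neighbourhood $\mathcal{V}(O,I)$, the finitely generated subgroup $H'':=\langle\Lambda_0,I\rangle$ satisfies $\Lambda_0\subseteq H''\cap G_v\subseteq H\cap G_v=\Lambda_0$ and $H''\subseteq H$, hence $H''\in\mathcal{V}(O,I)$, while $\pi(H'')\leq\pi(H)\leq S$ has infinite index in $\mathbb{F}_r$ (using Theorem~\ref{kernel}) and infinite index in $S$ when $S$ is finitely generated (since then $\pi(H)\in\mathcal{K}(S)=\Sub_{[\infty]}(S)$), so $H''\in\mathcal{Q}$.

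For the combinatorial model, I would use that a subgroup $H$ with $H\cap G_v=\Lambda_0$ is the same datum as a subgroup $\pi(H)\leq S$ together with a cocycle in $Z^1\big(\pi(H),\mathbb{Z}^d/\Lambda_0\big)$ encoding a lift (equivalently, a subgroup $\bar H$ of $(\mathbb{Z}^d/\Lambda_0)\rtimes S$ meeting $\mathbb{Z}^d/\Lambda_0$ trivially); equivalently again, it corresponds to a pointed covering of a fixed ``decorated bouquet'' $B$ obtained from $\mathscr{H}$ by attaching a coset of $\Lambda_0$ in $\mathbb{Z}^d$ to each vertex and the translation dictated by the labels of $\mathscr{H}$ to each edge --- the $\mathbb{Z}^d$-labelled analogue of the identification of subgroups of $\mathbb{F}_r$ with coverings of $B_r$ in Remark~\ref{ideafreegp}. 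Under this correspondence the condition $\pi(H)\in\mathcal{K}(S)$ becomes ``infinitely many sheets over the $S$-direction of $B$'' when $S$ is finitely generated, and is vacuous when $S$ is infinitely generated, once more by Proposition~\ref{freegroup}.

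To build $H_*$ I would enumerate the finitely generated subgroups $(H_n)_n$ of $\mathcal{Q}$ and, for each $n$ and each basic neighbourhood of $H_n$, take a finite connected subgraph of the covering attached to $H_n$ that records $H_n$ to that precision; this exists because $\mathbb{Z}^d/\Lambda_0$ is finitely generated, so each $\Lambda_0$-fibre is pinned down by finitely much data, and $\pi(H_n)$ has a finite core; and it is a \emph{proper} subgraph because $H_n\in\mathcal{K}(G)$ makes $H_n\backslash\mathcal{T}$ infinite (Theorem~\ref{kernel}). Feeding all these finite subgraphs (and the corresponding coverings) into the second assertion of Remark~\ref{ideafreegp} produces a single covering $E_*\to B$ of infinite degree containing pairwise disjoint copies of all of them; let $H_*$ be the subgroup it defines. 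Then $H_*\cap G_v=\Lambda_0$, each disjoint copy, based at its basepoint, exhibits a conjugate of $H_*$ under $\mathbb{Z}^d\rtimes S$ lying in the corresponding neighbourhood of $H_n$, and the infinite degree forces $\pi(H_*)\in\mathcal{K}(S)$, so $H_*\in\mathcal{Q}$; with the density of the $H_n$ in $\mathcal{Q}$ and the reduction of the first paragraph, $[H_*]_G$ is dense in $\mathcal{P}_{\mathscr{C}}\setminus\mathcal{D}_{\mathscr{C}}$.

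The main obstacle is the second step. The Bass--Serre, respectively $\mathscr{H}$-graph, data of $H$ does not see the cocycle class when the vertex group is abelian, so one must genuinely carry the $\mathbb{Z}^d/\Lambda_0$-decoration and verify that the completion and gluing operations underlying Remark~\ref{ideafreegp} (the decorated analogues of Lemma~\ref{completion}) remain available in this enriched setting; and one must ensure that, when $\mathbb{Z}^d/\Lambda_0$ is infinite, the completion is carried out so that $E_*$ is infinite in the $S$-direction and not merely in the $\Lambda_0$-fibres, so that $\pi(H_*)$ really has infinite index in $S$ in every sub-case of Remark~\ref{disjonction}.
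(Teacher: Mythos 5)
Your overall architecture is the same as the paper's: fix $\Lambda_0\in\mathscr{C}$, reduce by conjugation to the slice $\mathcal{Q}=\{H\in\mathcal{P}_{\mathscr{C}}\setminus\mathcal{D}_{\mathscr{C}} \mid H\cap G_v=\Lambda_0\}$, approximate by finitely generated members of the slice, encode everything as coverings over the Schreier graph of $S=\Stab_{\mathbb{F}_r}(\Lambda_0)$, and glue countably many finite approximations into a single infinite-degree covering via Remark \ref{ideafreegp}. The two reduction steps you make explicit (conjugating into $\mathcal{Q}$ by an element of the form $(0,\gamma)$, and the density of finitely generated members of $\mathcal{Q}$ via $H''=\langle\Lambda_0,I\rangle$, checking $H''\cap G_v=\Lambda_0$ and $\pi(H'')\in\mathcal{K}(S)$ through Remark \ref{disjonction} and Proposition \ref{freegroup}) are correct, and are in fact left implicit in the paper, so this part is fine and even slightly more careful than the published argument.

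The gap is the one you name yourself: the ``decorated'' gluing. As written, the construction of $H_*$ rests on a covering-space correspondence for a decorated bouquet and on decorated analogues of the gluing and completion operations, none of which you establish; note moreover that $(\mathbb{Z}^d/\Lambda_0)\rtimes S$ is not free and that the constraint of meeting $\mathbb{Z}^d/\Lambda_0$ trivially is not a covering-theoretic condition, so Remark \ref{ideafreegp} does not apply verbatim to the decorated object. The paper shows that no new machinery is needed, provided the steps are ordered differently: (1) for each finitely generated $H_i\in\mathcal{Q}$ take a finite subpreaction $\beta_i$ of the action on $H_i\backslash G$ determining $H_i$ --- it is the preaction, not the $\mathscr{H}$-graph, that carries your cocycle data; (2) apply Remark \ref{ideafreegp} only to the \emph{undecorated} coverings $S_i\to S_0$ of the Schreier graph of $S$, producing a covering $S\to S_0$ of infinite degree containing the finite pieces disjointly with tree quotient; (3) restore the vertex labels uniquely by propagating $\Lambda_0$ along edges via the Transfer Equation \ref{transfer} (possible since all labels lie in $GL_d(\mathbb{Z})$), turning $S$ into a saturated $\mathscr{H}$-graph; (4) invoke Lemma \ref{completion} to extend the partial actions $\beta_i$ to a global action with this $\mathscr{H}$-graph --- this lemma is precisely the ``decorated analogue'' you ask for, and it is already proved in the paper in full generality. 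This ordering also dissolves your second worry about infinitude in the $S$-direction versus in the fibres: the infinite degree is imposed directly on the covering of $S_0$, so $\pi(H_*)$ has infinite index in $S$ when $S$ is finitely generated, while if $S$ is infinitely generated no degree condition is needed, since $\mathcal{K}(S)=\Sub(S)$ and $[\mathbb{F}_r:S]=\infty$ already yields $H_*\in\mathcal{K}(G)$ by Theorem \ref{kernel}.
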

	
	To prove this lemma we will use the formalism of $\mathscr{H}$-graphs. Notice that in this context, the $\mathscr{H}$-graph $\mathcal{G}$ of a subgroup $H \leq G$ will be uniquely determined by $\pi(H)$ and $H \cap G_v$. It is the Schreier graph of the subgroup $\pi(H) \leq \mathbb{F}_r$ (with respect to the generating set $\{a_1,..., a_r\}$) whose labels are defined as follows: if $V_0$ is a vertex labeled $\Lambda_0 := H \cap G_v$, and $V$ is any other vertex of $\mathcal{G}$, then, denoting by $E_1,...,E_s$ an edge path labeled $f_1,...,f_s$ that connects $V_0$ to $V$, the Transfer Equation \ref{transfer} tells us that the label of $V$ is the subgroup $\left(\M_{f_s, \trg}\M_{f_s, \src}^{-1} \ldots \M_{f_1, \trg}\M_{f_1, \src}^{-1} \right)\Lambda_0$ of $\mathbb{Z}^d$. In particular, the set of labels of the vertices of $\mathcal{G}$ is exactly $\{\rho(\gamma)\Lambda_0, \gamma \in \mathbb{F}_r\}$. 
	
	\begin{proof}
		Let $\mathscr{C} \in Conj(G_v)$ and let $\Lambda_0 \in \mathscr{C}$. Let us denote by $\Gamma_0 := \Stab_{\mathbb{F}_r}(\Lambda_0)$. Let $(H_i)_{i \in \mathbb{N}^*} \in (\mathcal{P}_{\mathscr{C}} \setminus \mathcal{D}_{\mathscr{C}})^{\mathbb{N}}$ be the sequence of finitely generated subgroups such that $H_i \cap G_v = \Lambda_0$ for every $i \in \mathbb{N}^*$. For every $i \in \mathbb{N}^*$, let $\alpha_i$ be the saturated $\mathscr{H}$-preaction associated to $H_i$, and let $\mathcal{G}_i$ be its $\mathscr{H}$-graph (based at a vertex $V_i$). Let $\beta_i$ be a sub-$\mathscr{H}$-preaction of $\alpha_i$ whose stabilizer is $H_i$ and whose $\mathscr{H}$-graph $K_i$ is a finite subgraph of $\mathcal{G}_i$ that contains $V_i$ (legit, because $H_i$ is finitely generated). Let $S_i$ be the graph obtained by forgetting the labels of the vertices of $\mathcal{G}_i$, pointed at the vertex $V_i$. The graph $S_i$ is the Schreier graph of $\pi(H_i)$ with respect to the generating set $\{a_1,...a_r\}$ of $\mathbb{F}_r$. As $\pi(H_i)$ is a subgroup of $\Gamma_0$, denoting by $S_0$ the Schreier graph of $\Gamma_0$, the graph $S_i$ is in fact a covering of the graph $S_0$ (for every $i \in \mathbb{N}^*$). As $\pi(H_i)$ belongs to $\mathcal{K}(\Gamma_0)$, one has the following dichotomy:
		\begin{itemize}
			\item either $\Gamma_0$ is not finitely generated;
			\item or $\Gamma_0$ is finitely generated and the covering map $S_i \to S_0$ has infinite degree for every $i \in \mathbb{N}^*$.
		\end{itemize} 
		Let us define $F_i$ as the subgraph of $S_i$ obtained by forgetting the labels of the subgraph $K_i$ of $\mathcal{G}_i$. In both cases, Remark \ref{ideafreegp} provides a covering map $S \to S_0$ such that $S$ contains $(F_i)_{i \in \mathbb{N}^*}$ as disjoint subgraphs and such that the quotient $S/\bigsqcup_{i \in \mathbb{N}^*}F_i$ is a tree. This covering corresponds to an infinite index subgroup $\Gamma \in \Sub_{[\infty]}(\Gamma_0)$. After labeling the vertex $V_1$ by the subgroup $\Lambda_0$ (and all the other vertices of $S$ so that the Transfer Equation \ref{transfer} is satisfied), we obtain an infinite $\mathscr{H}$-graph $\mathcal{F}$: \begin{itemize}
			\item that contains $(K_i)_{i \in \mathbb{N}^*}$ as disjoint subgraphs;
			\item such that the quotient $\mathcal{F}/\bigsqcup_{i \in \mathbb{N}^*}K_i$ is a tree.
		\end{itemize}
		Thus, by Lemma \ref{completiond}, there exists a saturated $\mathscr{H}$-preaction $\gamma$ on a pointed countable set $(X,x)$ that extends $\beta_i$ for every $i \in \mathbb{N}^*$ and whose $\mathscr{H}$-graph $\mathcal{F}$ is infinite. Denoting by $L$ the stabilizer of $\gamma$, as the $\mathscr{H}$-graph of $L$ contains a vertex labeled $\Lambda_0$, one moreover has $L \cap G_v \in \mathscr{C}$. Thus, $L \in \mathcal{P}_{\mathscr{C}}$. As the covering map $S \to S_0$ has infinite degree, one also has $\pi(L) \in \Sub_{[\infty]}(\mathbb{F}_r) \cap \mathcal{K}(\Stab_{\mathbb{F}_r}(L \cap G_v))$, thus $L \in \mathcal{P}_{\mathscr{C}} \smallsetminus \mathcal{D}_{\mathscr{C}}$. The fact that $\gamma$ extends all the $\mathscr{H}$-preactions $\beta_i$ implies that the conjugacy class of $L$ accumulates on all the finitely generated subgroups of $\mathcal{P}_{\mathscr{C}} \smallsetminus \mathcal{D}_{\mathscr{C}}$ whose intersection with $G_v$ is $\Lambda_0$. For any other element $K$ of $\mathcal{P}_{\mathscr{C}} \smallsetminus \mathcal{D}_{\mathscr{C}}$, there exists a conjugate $gKg^{-1}$ whose intersection with $G_v$ is $\Lambda_0$. Writing $gKg^{-1}$ as the limit of an increasing sequence of finitely generated subgroups whose intersection with $G_v$ is $\Lambda_0$, we finally get that $gKg^{-1}$ is the limit of a sequence of conjugates of $L$. Thus, $K$ is itself the limit of a sequence of conjugates of $L$, so the orbit of $L$ under the conjugation action is dense in $\mathcal{P}_{\mathscr{C}} \smallsetminus \mathcal{D}_{\mathscr{C}}$. 
	\end{proof} 
	
	Hence we obtain the following decomposition of $\mathcal{K}(G)$:
	\begin{theorem}\label{dynsemidir}
		There exists a $G$-invariant countable partition \[\mathcal{K}(G) = \bigsqcup_{\mathscr{C} \in Conj(G_v)}\mathcal{P}_{\mathscr{C}}\] into $F_{\sigma}$-subsets of $\mathcal{K}(G)$, and, for every $\mathscr{C} \in Conj(G_v)$, a countable $G$-invariant open subset $\mathcal{D}_{\mathscr{C}} \subseteq \mathcal{P}_{\mathscr{C}}$ (for the induced topology on $\mathcal{P}_{\mathscr{C}}$) such that there exists a dense orbit in $\mathcal{P}_{\mathscr{C}} \setminus \mathcal{D}_{\mathscr{C}}$. Moreover, denoting by $\mathscr{C} = \{\rho(\gamma)\Lambda_0, \gamma \in \mathbb{F}_r\}$ for some $\Lambda_0 \leq \mathbb{Z}^d$:   
		\begin{enumerate}
			\item if $\Lambda_0$ has finite index in $\mathbb{Z}^d$, then $\mathcal{P}_{\mathscr{C}}$ is a clopen set;
			\item $\mathcal{P}_{\mathscr{C}}$ is closed if and only if $\Stab_{\mathbb{F}_r}(\Lambda_0)$ has finite index in $\mathbb{F}_r$;
			\item if $\Stab_{\mathbb{F}_r}(\Lambda_0)$ either is infinitely generated or has finite index in $\mathbb{F}_r$, then $\mathcal{D}_{\mathscr{C}} = \emptyset$.
		\end{enumerate}
	\end{theorem}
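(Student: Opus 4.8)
The plan is to assemble the statement from the pieces already established, and to verify the three topological claims separately. First I would recall that Equation \eqref{partition}, combined with Theorem \ref{kernel}, already provides the $G$-invariant countable partition $\mathcal{K}(G) = \bigsqcup_{\mathscr{C} \in Conj(G_v)} \mathcal{P}_{\mathscr{C}}$; that this partition is indexed by a countable set follows because $\mathscr{C} \mapsto |\det(\Lambda_0)|$ (for $\Lambda_0 \in \mathscr{C}$) is well-defined and each value is taken by only finitely many conjugacy classes, so $Conj(G_v)$ is countable. The decomposition $\mathcal{P}_{\mathscr{C}} = (\mathcal{P}_{\mathscr{C}} \setminus \mathcal{D}_{\mathscr{C}}) \sqcup \mathcal{D}_{\mathscr{C}}$ into $G$-invariant sets is exactly the one produced just before Lemma \ref{denseorbitsemidirect} via Lemma \ref{stab}, and Lemma \ref{denseorbitsemidirect} gives the dense orbit in $\mathcal{P}_{\mathscr{C}} \setminus \mathcal{D}_{\mathscr{C}}$. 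So the bulk of the theorem is a packaging step; what remains is: (i) $\mathcal{P}_{\mathscr{C}}$ is an $F_\sigma$; (ii) the three enumerated topological statements; and (iii) the assertion that $\mathcal{D}_{\mathscr{C}}$ is countable and open in $\mathcal{P}_{\mathscr{C}}$.

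For the $F_\sigma$ claim and item (1), I would argue exactly as in Proposition \ref{topopiece}: write $\mathcal{P}_{\mathscr{C}} \subseteq \bigcup_{\Lambda_1 \in \mathscr{C}} \{H \leq G \mid H \cap G_v = \Lambda_1\}$ and intersect with $\mathcal{K}(G)$; by Lemma \ref{topgen} each set in the union is closed, and $\mathscr{C}$ is countable, so $\mathcal{P}_{\mathscr{C}}$ is $F_\sigma$ in $\mathcal{K}(G)$. If $\Lambda_0$ has finite index in $\mathbb{Z}^d$, then every $\Lambda_1 \in \mathscr{C}$ is a finitely generated finite-index subgroup of $G_v$, so by the second part of Lemma \ref{topgen} each $\{H \leq G \mid H \cap G_v = \Lambda_1\}$ is open; moreover in that case $\mathscr{C}$ is finite (finitely many lattices of given determinant), hence $\mathcal{P}_{\mathscr{C}}$ is a finite union of clopen sets, hence clopen — this gives item (1). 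For item (2): if $\Stab_{\mathbb{F}_r}(\Lambda_0)$ has finite index in $\mathbb{F}_r$, then by Lemma \ref{stab} the index $[\mathbb{F}_r : \pi(H)] \geq [\mathbb{F}_r : \Stab_{\mathbb{F}_r}(\Lambda_0)]$ is bounded on $\mathcal{P}_\mathscr{C}$... wait, I need the reverse; rather, here one should note $\mathscr{C}$ may still be infinite, so I would instead prove closedness by a limit argument: a convergent sequence $H_n \to H$ with $H_n \in \mathcal{P}_{\mathscr{C}}$ has $H_n \cap G_v \to H \cap G_v$ eventually, and since $\Stab_{\mathbb{F}_r}(\Lambda_0)$ has finite index, the orbit $\mathscr{C}$ consists of finite-index subgroups of $\mathbb{Z}^d$ of fixed determinant only when $\Lambda_0$ itself has finite index — so I would split on whether $\mathrm{rk}(\Lambda_0) = d$. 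The cleanest route to item (2) is: $\mathcal{P}_{\mathscr{C}}$ is closed iff the rank cannot drop in a limit, and by the construction in Proposition \ref{topopiece} (using Corollary \ref{coreq} / Remark \ref{ideafreegp} to build a saturated forest $\mathscr{H}$-graph realizing a rank-dropping sequence of labels) this fails precisely when $\{\Lambda_1 \mid \Lambda_1 \simeq \Lambda_0\}$ is infinite, which under the semidirect-product identification means $\mathbb{F}_r \cdot \Lambda_0$ is infinite, equivalently $\Stab_{\mathbb{F}_r}(\Lambda_0)$ has infinite index. For item (3) and the countability/openness of $\mathcal{D}_{\mathscr{C}}$, I would invoke Remark \ref{disjonction}: if $\Stab_{\mathbb{F}_r}(\Lambda_0)$ is infinitely generated or of finite index, then $\mathcal{K}(\Stab_{\mathbb{F}_r}(\Lambda_0)) = \Sub_{[\infty]}(\Stab_{\mathbb{F}_r}(\Lambda_0))$ (for the free-group-of-finite-rank case) or equals the whole subgroup set (infinitely generated case), and in either case the condition $\pi(H) \notin \mathcal{K}(\Stab_{\mathbb{F}_r}(H \cap G_v))$ is never met, so $\mathcal{D}_{\mathscr{C}} = \emptyset$. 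In the remaining case $\mathcal{D}_{\mathscr{C}}$ is cut out by $\pi(H)$ having finite index in a finite-rank free group, which is an open and countable condition, so $\mathcal{D}_{\mathscr{C}}$ is open in $\mathcal{P}_{\mathscr{C}}$ and countable.

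The main obstacle is the careful bookkeeping in item (2): one must show both directions of the equivalence "$\mathcal{P}_{\mathscr{C}}$ closed $\iff$ $\Stab_{\mathbb{F}_r}(\Lambda_0)$ has finite index". The forward-contrapositive direction (infinite index $\Rightarrow$ not closed) requires exhibiting a subgroup $K \in \mathcal{K}(G) \setminus \mathcal{P}_{\mathscr{C}}$ that is a limit of conjugates of a fixed $H \in \mathcal{P}_{\mathscr{C}}$; this is where I would reuse, essentially verbatim, the forest-$\mathscr{H}$-graph construction from the last paragraph of the proof of Proposition \ref{topopiece}, noting that in the semidirect setting an infinite $\mathbb{F}_r$-orbit of $\Lambda_0$ forces the orbit to contain lattices of unbounded "shape", and a diagonal limit produces a subgroup whose intersection with $G_v$ has smaller rank, hence lies outside $\mathcal{P}_{\mathscr{C}}$. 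The reverse direction (finite index $\Rightarrow$ closed) is where I must be careful that finite index of $\Stab_{\mathbb{F}_r}(\Lambda_0)$ genuinely prevents rank-dropping limits; the point is that finitely many translates $\rho(\gamma)\Lambda_0$ cover $\mathscr{C}$, all of the same rank, so $\{H \cap G_v \mid H \in \mathcal{P}_{\mathscr{C}}\}$ is a finite set of subgroups of $\mathbb{Z}^d$ and Lemma \ref{topgen} applies to give $\mathcal{P}_{\mathscr{C}}$ as a finite union of closed sets intersected with $\mathcal{K}(G)$.
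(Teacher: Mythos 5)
Most of your packaging matches the paper: the partition from Equation \eqref{partition} together with Theorem \ref{kernel}, the dense orbit from Lemma \ref{denseorbitsemidirect}, the $F_\sigma$ property and item (1) from Lemma \ref{topgen} plus finiteness of the orbit of a finite-index lattice, the ``finite index $\Rightarrow$ closed'' half of item (2) as a finite union of closed sets, and item (3) via Remark \ref{disjonction}. (Minor slip: your countability argument via $\mathscr{C}\mapsto|\det(\Lambda_0)|$ only makes sense for finite-index $\Lambda_0$; countability of $Conj(G_v)$ is anyway immediate since $\Sub(\mathbb{Z}^d)$ is countable, and the determinant argument is what gives \emph{infinitude} of the partition.)

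The genuine gap is the ``infinite index $\Rightarrow$ not closed'' direction of item (2). You propose to reuse, ``essentially verbatim,'' the forest-$\mathscr{H}$-graph construction from the last paragraph of Proposition \ref{topopiece}, which is built on Corollary \ref{coreq} (and Lemma \ref{eqrel}). Those results are proved only under the standing assumption of Section \ref{eq} that $G$ is \emph{not} a semidirect product $\mathbb{Z}^d\rtimes\mathbb{F}_r$ — exactly the case at hand — and the paper explicitly warns, just before Lemma \ref{stab}, that this machinery does not carry over (their proofs use non-invertible edge labels to graft extra edges, which is impossible here since saturated $\mathscr{H}$-graphs are coverings of the rose, with exactly one edge of each label per vertex). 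So, as written, this step cites unavailable tools. It is repairable without them: for any $H$ with $H\cap G_v=\Lambda_0$ one has $gHg^{-1}\cap G_v=\rho(\pi(g))\Lambda_0$ (the computation of Lemma \ref{stab}, using normality of $G_v$), and $H\mapsto H\cap G_v$ is Chabauty-continuous, so a convergent subsequence of conjugates along a rank-dropping sequence $\rho(g_n)\Lambda_0\to\Lambda$ gives a limit in $\mathcal{K}(G)\setminus\mathcal{P}_{\mathscr{C}}$. The paper does something even simpler: it takes the subgroups $\rho(g_n)\Lambda_0\leq G_v\leq G$ themselves as elements of $\mathcal{P}_{\mathscr{C}}$ (their image under $\pi$ is trivial, hence they lie in $\mathcal{K}(G)$) and observes that their limit $\Lambda$, of strictly smaller rank, lies outside $\mathcal{P}_{\mathscr{C}}$; no $\mathscr{H}$-graph construction is needed at all.
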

	
\begin{proof}
    The existence of a dense orbit in $\mathcal{P}_{\mathscr{C}} \smallsetminus \mathcal{D}_{\mathscr{C}}$ is provided by Lemma \ref{denseorbitsemidirect}. The fact that $\mathcal{P}_{\mathscr{C}}$ is an $F_{\sigma}$ results from Lemma \ref{topgen}.
    
    If $\Lambda_0$ has finite index in $\mathbb{Z}^d$, then $\mathscr{C}$ is finite (because it is a set of finite index subgroups of prescribed finite index). Thus, $\mathcal{P}_{\mathscr{C}} = \bigsqcup_{C \in \mathscr{C}}\{H \in \mathcal{K}(G) \mid H \cap G_v = C \}$ is a finite union of clopen sets by Lemma~\ref{topgen}, so is itself clopen. This proves the first item.
    
    Now let us turn to the proof of the second item. Let $\Lambda_0 \leq \mathbb{Z}^d$ and let $\mathscr{C}$ be the $G$-conjugacy class of $\Lambda_0$. The fact that $\Stab_{\mathbb{F}_r}(\Lambda_0)$ has finite index in $\mathbb{F}_r$ is equivalent to the finiteness of the orbit $\{\rho(g)\Lambda_0, g \in G\}$. In particular, if $\Stab_{\mathbb{F}_r}(\Lambda_0)$ has finite index in $\mathbb{F}_r$, then \[\mathcal{P}_{\mathscr{C}} = \bigcup_{g \in \mathbb{F}_r}\{H \in \mathcal{K}(G) \mid H \cap G_v = \rho(g)\Lambda_0\}\] is closed as a finite union of closed sets by Lemma \ref{topgen}. Conversely, let us assume that the set $\{\rho(g)\Lambda_0, g \in G\}$ is infinite. In particular, there exists a sequence $(g_n)_{n \in \mathbb{N}} \in \mathbb{F}_r^{\mathbb{N}}$ such that $\rho(g_n)\Lambda_0$ converges to a subgroup $\Lambda \leq G_v$ of rank strictly less than the one of $\Lambda_0$. In particular, $\rho(g_n)\Lambda_0 \in \mathcal{P}_{\mathscr{C}}$ for every $n$, but $\Lambda \notin \mathcal{P}_{\mathscr{C}}$ which implies that $\mathcal{P}_{\mathscr{C}}$ is not closed. 
    
    The third item results from Remark \ref{disjonction}.
\end{proof}

\begin{remark}
Again, we did not use the cocompactness of the action of $G$ on its Bass-Serre tree. In other words, Theorem \ref{dynsemidir} extends to the case where $r=\infty$.
\end{remark}

To conclude, we give an explicit example where the set $\mathcal{D}_{\mathscr{C}}$ is non-empty, and where the conjugation action has no dense orbit for the induced topology on $\mathcal{P}_{\mathscr{C}}$: 
\begin{example}
    Let $\rP_1$ and $\rP_2$ be two matrices that generate a free subgroup of $\SL_2(\mathbb{Z})$; for instance,  $\rP_1=\begin{pmatrix}
    1 & 2 \\
    0 & 1
  \end{pmatrix}$ and $\rP_2 =  \begin{pmatrix}
    1 & 0 \\
    2 & 1
  \end{pmatrix}$. Let $G$ be the semidirect product $\mathbb{Z}^2 \rtimes \mathbb{F}_2$, where, denoting by $\{a_1, a_2\}$ a basis of $\mathbb{F}_2$, the generator $a_i$ acts by multiplication by $\rP_i$ on $\mathbb{Z}^2$. 
  
  Let $\Lambda_0= \begin{pmatrix}
    1  \\
    0 
  \end{pmatrix}\mathbb{Z}$. Then, any matrix of $\SL_2(\mathbb{Z})$ stabilizing the subgroup $\Lambda_0$ of $\mathbb{Z}^2$ is a power of $\begin{pmatrix}
    1 & 1 \\
    0 & 1
  \end{pmatrix}$. As the homomorphism $\rho: \begin{array}{ccccc}
 & \mathbb{F}_2 & \to & \SL_2(\mathbb{Z}) \\
& a_1 & \mapsto & \rP_1 \\
& a_2 & \mapsto & \rP_2
\end{array}$ is injective, the stabilizer $\Stab_{\mathbb{F}_2}(\Lambda_0)$ is also infinite cyclic (in fact, it is the subgroup generated by $a_1$). In particular, its perfect kernel is empty, so denoting by $\mathscr{C}:=\mathbb{F}_2 \cdot \Lambda_0$ the $G$-conjugacy class of $\Lambda_0$, one has \[\mathcal{D}_{\mathscr{C}} = \mathcal{P}_{\mathscr{C}}.\]

Let us show that there is no dense orbit for the conjugation action $G \curvearrowright \mathcal{P}_{\mathscr{C}}$. For any $v \in \mathbb{Z}^2$, let us consider the following invariant subsets \[U_v = \{H \in \mathcal{P}_{\mathscr{C}} \mid \exists g \in G, g(v,a_1)g^{-1} \in H\}.\]
The set $U_v$ is open as a union of clopen sets: \[U_v = \bigcup_{g \in G}\{H \in \mathcal{P}_{\mathscr{C}} \mid g(v,a_1)g^{-1} \in H\}.\]

\begin{claim}
For every $v,w \in \mathbb{Z}^2$ such that $v-w \notin \Lambda_0$, one has $U_v \cap U_w = \emptyset$. 
\end{claim}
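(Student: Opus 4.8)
The plan is to suppose $H\in U_v\cap U_w$ and derive $v-w\in\Lambda_0$, contradicting the hypothesis. By definition of $U_v$ and $U_w$ there are $g,g'\in G$ with $g(v,a_1)g^{-1}\in H$ and $g'(w,a_1)g'^{-1}\in H$. Since $H\in\mathcal{P}_{\mathscr{C}}$ we may fix $\gamma\in\mathbb{F}_2$ with $H\cap G_v=\rho(\gamma)\Lambda_0$, and then Lemma~\ref{stab} gives $\pi(H)\le\Stab_{\mathbb{F}_2}(\rho(\gamma)\Lambda_0)=\gamma\,\Stab_{\mathbb{F}_2}(\Lambda_0)\,\gamma^{-1}=\gamma\langle a_1\rangle\gamma^{-1}$, using $\Stab_{\mathbb{F}_2}(\Lambda_0)=\langle a_1\rangle$.

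First I would pin down $\pi(g)$ and $\pi(g')$ by free-group combinatorics. The element $\pi(g)a_1\pi(g)^{-1}$ lies in $\pi(H)\le\gamma\langle a_1\rangle\gamma^{-1}$, so $\bigl(\gamma^{-1}\pi(g)\bigr)\,a_1\,\bigl(\gamma^{-1}\pi(g)\bigr)^{-1}=a_1^{m}$ for some $m\in\mathbb{Z}$. In $\mathbb{F}_2$ the conjugacy class of $a_1$ consists of cyclically reduced words of length one, so an element conjugate to $a_1$ is $a_1^{\pm1}$; and since no nontrivial element of a free group is conjugate to its inverse, $m=1$. Thus $\gamma^{-1}\pi(g)$ centralizes $a_1$, and as the centralizer of $a_1$ in $\mathbb{F}_2$ is $\langle a_1\rangle$ we obtain $\pi(g)=\gamma a_1^{n}$ for some $n\in\mathbb{Z}$, and likewise $\pi(g')=\gamma a_1^{n'}$ for some $n'\in\mathbb{Z}$ with the \emph{same} $\gamma$.

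Next comes a computation in $G=\mathbb{Z}^2\rtimes\mathbb{F}_2$. Writing $g=(s,\gamma a_1^{n})$ and $g'=(s',\gamma a_1^{n'})$ with $s,s'\in\mathbb{Z}^2$, the multiplication rule $(u_1,\delta_1)(u_2,\delta_2)=(u_1+\rho(\delta_1)u_2,\delta_1\delta_2)$ yields \[g(v,a_1)g^{-1}=\bigl((I-\rho(\gamma)P_1\rho(\gamma)^{-1})s+\rho(\gamma)P_1^{n}v,\ \gamma a_1\gamma^{-1}\bigr),\] and similarly $g'(w,a_1)g'^{-1}$ is obtained by replacing $(s,n,v)$ with $(s',n',w)$. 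Both of these elements belong to $H$ and have the same $\mathbb{F}_2$-component $\gamma a_1\gamma^{-1}$, so their quotient lies in $H\cap\ker\pi=H\cap G_v=\rho(\gamma)\Lambda_0$; multiplying out and applying $\rho(\gamma)^{-1}$ this reads \[(I-P_1)\,\rho(\gamma)^{-1}(s-s')\ +\ P_1^{n}v\ -\ P_1^{n'}w\ \in\ \Lambda_0.\]

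To conclude, note that $\Lambda_0$ is $P_1$-invariant and $P_1$ induces the identity on $\mathbb{Z}^2/\Lambda_0\cong\mathbb{Z}$ (it is unipotent upper-triangular with respect to a basis whose first vector spans $\Lambda_0$); hence $(I-P_1)\mathbb{Z}^2\subseteq\Lambda_0$ and $P_1^{n}v\equiv v$, $P_1^{n'}w\equiv w\pmod{\Lambda_0}$. Therefore $v-w\in\Lambda_0$, the desired contradiction, so $U_v\cap U_w=\emptyset$. The only step requiring a genuine argument rather than bookkeeping is the free-group computation identifying $\pi(g)$ and $\pi(g')$; the main subtlety in the remainder is the observation that passing to the quotient of the two conjugates of $(v,a_1)$ and $(w,a_1)$ cancels the free part and forces the vector parts to differ by an element of $H\cap G_v$, after which the $(I-P_1)$-term is harmlessly absorbed into $\Lambda_0$.
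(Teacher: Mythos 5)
Your proof is correct and follows essentially the same route as the paper: you use Lemma \ref{stab} together with $\Stab_{\mathbb{F}_2}(\Lambda_0)=\langle a_1\rangle$ and free-group combinatorics to force the free parts of the conjugators into $\gamma\langle a_1\rangle$, then conclude via the semidirect-product computation with $(I-P_1)\mathbb{Z}^2\subseteq\Lambda_0$ and $P_1\equiv I \bmod \Lambda_0$. The only (cosmetic) difference is that the paper first conjugates $H$ so that $(v,a_1)\in H$, which forces $\gamma,\gamma_0\in\langle a_1\rangle$ and $H\cap\mathbb{Z}^2=\Lambda_0$, whereas you keep both conjugators and a general $\gamma$ and undo it by applying $\rho(\gamma)^{-1}$ at the end.
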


As the quotient $\mathbb{Z}^2/\Lambda_0$ is infinite, this will imply the existence of an infinite countable set of pairwise disjoint $G$-invariant open subsets, making the existence of a dense orbit impossible.

\begin{cproof}
By contraposition, let $v, w \in \mathbb{Z}^2$ such that $U_v \cap U_w \neq \emptyset$. Let $H \in \mathcal{P}_{\mathscr{C}}$. Up to conjugating $H$, there exists $g \in G$ such that \begin{itemize}
    \item $(v,a_1) \in H$;
    \item $g(w,a_1)g^{-1} \in H$. 
\end{itemize}
As $H \in \mathcal{P}_{\mathscr{C}}$, there exists $\gamma \in \mathbb{F}_2$ such that $H \cap \mathbb{Z}^2 = \rho(\gamma) \Lambda_0$. Thus, $\Stab_{\mathbb{F}_2}(H \cap \mathbb{Z}^2) = \gamma \langle a_1 \rangle \gamma^{-1}$. Denoting by $g = (u_0,\gamma_0)$, one deduces that $a_1, \gamma_0 a_1 \gamma_0^{-1} \in \gamma \langle a_1 \rangle \gamma^{-1}$. This forces $\gamma$ and $\gamma_0$ to belong to $\langle a_1 \rangle$. In particular, $H \cap \mathbb{Z}^2 = \Lambda_0$. Let $k \in \mathbb{Z}$ such that $\gamma_0 = a_1^k$. One has \begin{align*}
    g(w,a_1)g^{-1}(v,a_1)^{-1} &= \left(u_0, a_1^k\right)(w,a_1)\left(u_0,a_1^{k}\right)^{-1}(v,a_1)^{-1} \\
    &= \left(u_0 + \rho(a_1)^kw, a_1^{k+1}\right)\left(-\rho(a_1)^{-k}u_0, a_1^{-k}\right)(v,a_1)^{-1} \\
    &= (u_0 + \rho(a_1)^kw - \rho(a_1)u_0, a_1)\left(-\rho(a_1)^{-1}v,a_1^{-1}\right) \\ 
    &= \left(u_0 - \rP_1u_0 + \rP_1^kw - v,1\right) \\
    &\in \left(H\cap \mathbb{Z}^2\right) \times \{1\} \\
    &= \Lambda_0 \times \{1\}.
\end{align*} 
Thus, $u_0 - \rP_1u_0 + \rP_1^kw - v \in \Lambda_0$. Notice that $(I_2 - \rP_1)\mathbb{Z}^2 \subseteq \Lambda_0$. Thus, \[u_0-\rP_1u_0 \in \Lambda_0, \] and \begin{align*}\rP_1^kw &= w +  \sum_{i=0}^{k-1}\rP_1^{i}(\rP_1w - w) \\
&\in w + \Lambda_0,
\end{align*} which implies that $w-v \in \Lambda_0$. 
\end{cproof}

\end{example}

\bibliographystyle{alpha}
\bibliography{reference}

@misc{solitar2,
      title={On the space of subgroups of {B}aumslag-{S}olitar groups {II}: {H}igh transitivity}, 
      author={Gaboriau, Damien and Le Maître, François and Stalder, Yves},
      year={2024},
      eprint={2410.23224},
      archivePrefix={arXiv},
      primaryClass={math.GR},
      url={https://arxiv.org/abs/2410.23224}, 
      note= {arXiv: 2410.23224. To appear in \textit{Confluentes. Math.}}
}

@misc{bontemps,
      title={Perfect kernel of generalized {B}aumslag-{S}olitar groups}, 
      author={Bontemps, Sasha},
      year={2026},
      eprint={2411.03221},
      archivePrefix={arXiv},
      primaryClass={math.GR},
      url={https://arxiv.org/abs/2411.03221}, 
      note={arXiv: 2411.03221. To appear in \textit{Transactions of the AMS}.}
}

@article{modular, title={Separability properties of higher rank {GBS} groups}, volume={57}, ISSN={1469-2120}, DOI={10.1112/BLMS.70024}, number={4}, journal={Bulletin of the London Mathematical Society}, publisher={John Wiley and Sons Ltd}, author={Lopez de Gamiz Zearra, Jone and Shepherd, Sam}, year={2025}, pages={1171--1194} }

@misc{measureeq,
      title={Measure equivalence classification of {B}aumslag-{S}olitar groups}, 
      author={Damien Gaboriau and Antoine Poulin and Anush Tserunyan and Robin Tucker-Drob and Konrad Wr\'obel},
    note={in preparation}
}

@book{serre,
  title={Arbres, amalgames, SL2: cours au Coll{\`e}ge de France},
  author={Serre, Jean-Pierre},
  series={Ast{\'e}risque / Soci{\'e}t{\'e} Math{\'e}matique de France},
  url={https://books.google.fr/books?id=jhsYwQEACAAJ},
  year={1983}
}

@Book{Kechris,
 Author = {Kechris, Alexander S.},
 Title = {Classical descriptive set theory},
 FSeries = {Graduate Texts in Mathematics},
 Series = {Grad. Texts Math.},
 ISSN = {0072-5285},
 Volume = {156},
 ISBN = {3-540-94374-9},
 Year = {1995},
 Publisher = {Berlin: Springer-Verlag},
 Language = {English},
 Keywords = {03E15,03-01,28A05,03-02,91A44,54H05},
 zbMATH = {722611},
 Zbl = {0819.04002}
}

@article{totipotent,
      title={On dense totipotent free subgroups in full groups}, 
      author={Carderi, Alessandro and Gaboriau, Damien and Le Maître, François},
year={2023},
journal={Geometry \& Topology },
pages={2297--2318},
volume={27}
}

@article{AG,
title = {Perfect kernel and dynamics: from {B}ass-{S}erre theory to hyperbolic groups},
author = {Azuelos, Pénélope and Gaboriau, Damien},
year = {2024},
doi = {10.1007/s00208-024-03038-w},
language = {English},
volume = {391},
pages = {4733–4789},
journal = {Mathematische Annalen},
issn = {0025-5831},
publisher = {Springer, New York, NY},
number = {3},

}

@article{hightrans,
	journal={Discrete Analysis},
	doi={10.19086/da.37645},
	title={A characterization of high transitivity for groups acting on trees},
	author={Fima, Pierre and Le Maître, François and Moon, Soyoung and Stalder, Yves},
	date={2022-08-31},
	year=2022,
}

@article{tworelators,
     author = {Baumslag , Gilbert and Solitar, Donald},
     title = {Some two-generator one-relator non-{H}opfian groups},
     journal = {Bull. Amer. Math. Soc.},
     volume = {68},
     number = {6},
     year = {1962},
     pages = { 199-201},
     language = {en},
     url = {http://dml.mathdoc.fr/item/1183524561}
}

@article{bass,
title = {Covering theory for graphs of groups},
journal = {Journal of Pure and Applied Algebra},
volume = {89},
number = {1},
pages = {3-47},
year = {1993},
issn = {0022-4049},
doi = {https://doi.org/10.1016/0022-4049(93)90085-8},
url = {https://www.sciencedirect.com/science/article/pii/0022404993900858},
author = {Bass, Hyman}
}

@article {whyte,
    AUTHOR = {Whyte, Kevin},
     TITLE = {The large scale geometry of the higher {B}aumslag-{S}olitar
              groups},
   JOURNAL = {Geom. Funct. Anal.},
  FJOURNAL = {Geometric and Functional Analysis},
    VOLUME = {11},
      YEAR = {2001},
    NUMBER = {6},
     PAGES = {1327--1343},
      ISSN = {1016-443X},
   MRCLASS = {20F65 (20F69)},
  MRNUMBER = {1878322},
MRREVIEWER = {Lee Mosher},
       DOI = {10.1007/s00039-001-8232-6},
       URL = {https://ezproxy-prd.bodleian.ox.ac.uk:2095/10.1007/s00039-001-8232-6},
}

@article{levitt,
     author = {Levitt, Gilbert},
     title = {Generalized {Baumslag{\textendash}Solitar} groups: rank and finite index subgroups},
     journal = {Annales de l'Institut Fourier},
     pages = {725--762},
     publisher = {Association des Annales de l{\textquoteright}institut Fourier},
     volume = {65},
     number = {2},
     year = {2015},
     doi = {10.5802/aif.2943},
     language = {en},
     url = {https://aif.centre-mersenne.org/articles/10.5802/aif.2943/}
}

@article{levittisom,
author = {Levitt, Gilbert},
year = {2007},
pages = {473--515},
title = {On the automorphism group of generalized {B}aumslag-{S}olitar groups},
volume = {11},
number = {1},
journal = {Geometry \& Topology},
doi = {10.2140/gt.2007.11.473}
}

@article{abelian,
author = {Cornulier, Yves and Guyot, Luc and Pitsch, Wolfgang},
year = {2010},
pages = {727--746},
title = {The space of subgroups of an abelian group},
volume = {81},
number = {3},
journal = {Journal of the London Mathematical Society},
doi = {10.1112/jlms/jdq016}
}

@article{lamplighter,
author = {Bowen, Lewis and Grigorchuk, Rostislav and Kravchenko, Rostyslav},
year = {2012},
pages = {763--782},
title = {Invariant random subgroups of lamplighter groups},
volume = {207},
journal = {Israel Journal of Mathematics},
doi = {10.1007/s11856-015-1160-1}
}

@article{solitar1,
author = {Carderi, Alessandro and Gaboriau, Damien and Le Maître, François and Stalder, Yves},
year = {2025},
pages = {1711--1758},
title = {On the space of subgroups of Baumslag–Solitar groups {I}: Perfect kernel and phenotype},
volume = {41},
number = {5},
journal = {Revista Matemática Iberoamericana},
doi = {10.4171/rmi/1549}
}

@article{button, title={Generalised Baumslag-Solitar groups and hierarchically hyperbolic groups}, volume={25}, number={4}, journal={Algebraic \& Geometric Topology}, publisher={Mathematical Sciences Publishers}, author={Button, Jack O.}, year={2025}, month={Aug}, pages={2253--2279} }

\bigskip
{\footnotesize
	
	\noindent
	{\textsc{ENS-Lyon,
			Unité de Mathématiques Pures et Appliquées,  69007 Lyon, France}}
	\par\nopagebreak \texttt{sasha.bontemps@ens-lyon.fr}
}

\end{document}